\author{Henri Elad Altman}
\address{Freie Universit\"{a}t Berlin, Germany}
\email{henrialtman@mi.fu-berlin.de}
\title[Taylor Estimates for Pinned Bessel Bridges and IbPF]{Taylor Estimates for the laws of pinned Bessel bridges, and Integration by Parts}
\date{\today}
\renewcommand{\d}{\, \mathrm{d}}
\theoremstyle{plain}
\newtheorem{thm}{Theorem}[section]
\newtheorem{lm}[thm]{Lemma}
\newtheorem{prop}[thm]{Proposition}
\theoremstyle{definition}
\newtheorem{df}[thm]{Definition}
\newtheorem{nota}[thm]{Notation}
\newtheorem{rk}[thm]{Remark}
\numberwithin{equation}{section}
\begin{document}
\maketitle


\begin{abstract}
In this article, we extend the integration by parts formulae for the laws of Bessel bridges obtained in \cite{EladAltman2019}, by showing that these formulae hold for very general test functionals on $L^{2}(0,1)$. A key step consists in establishing new Taylor estimates on the laws of pinned Bessel bridges.
\end{abstract}

\section{Introduction}

Integration by parts is one of the most fundamental tools in analysis, and most notably in stochastic analysis. In particular, it plays a central role in the study of some stochastic PDEs for which the usual tools from stochastic calculus break down. For instance, it lies at the core of the theory of Dirichlet forms (see, e.g.,\cite{fukushima2010dirichlet} for an introduction to the theory). Obtaining integration by parts formulae (IbPF for short) for measures in infinite-dimensional spaces is a difficult task in general. One of the most celebrated examples is the case of the Wiener measure, or variants thereof, like the Brownian bridge on a fixed interval.    

In a series of papers of the 2000s, Zambotti introduced a family of stochastic PDEs with reflection (\cite{zambotti2002integration}) and repulsion from $0$ (\cite{zambotti2003integration}) whose invariant probability measures correspond to the laws, on $L^{2}(0,1)$, of Bessel bridges of dimension $\delta \geq 3$ on $[0,1]$. The solutions to such SPDEs are constructed using the techniques introduced by Nualart and Pardoux in \cite{nualart1992white}. They evolve in the set of nonnegative continuous functions on $[0,1]$, and have been proved to display a rich behaviour which, although reminiscent of Bessel processes, is more subtle and intriguing (see \cite{zambotti2017random}, Chapters 5 and 7, for an overview on their remarkable properties). Moreover, they  
arise naturally to describe the fluctuations of a $\nabla \Phi$ interface evolving on a wall (see \cite{funaki2001fluctuations} and \cite{zambotti2004fluctuations}). 

The key tools to investigate these equations are the IbPF on the laws of Bessel bridges of dimension $\delta \geq 3$ between $0$ and $0$, $P^{\delta}$, established by Zambotti in \cite{zambotti2002integration}, \cite{zambotti2003integration}. The strategy he used to obtain these formulae was essentially to transfer IbPF from laws for which such formulae are already known. More precisely, he proceeded in two steps:
\begin{itemize}
\item interpreting $P^{3}$ as the law of a Brownian bridge conditioned to remain nonnegative, he derived an IbPF on it from the well-known IbPF for the Brownian bridge,
\item exploiting absolute continuity relations between $P^{3}$ and $P^{\delta}$, he transferred the IbPF from the former to the latter.
\end{itemize}

This strategy, which worked very well for $\delta \geq 3$, does not carry over to treat the case $\delta<3$. The main reason is that, in such a regime, the measure $P^{\delta}$ no longer satisfies log-concavity, a very important property when deriving IbPF and studying the associated dynamics \cite{ASZ}. In  addition, when, $\delta < 2$, the absolute continuity relation used in the second step outlined above breaks down. In fact, for several years, Zambotti's results could not be extended to treat Bessel bridges of dimension smaller than $3$. An exception was the case $\delta=1$, corresponding to the law of the reflected Brownian bridge on $[0,1]$. Thus, in \cite{zambotti2005integration}, Zambotti obtained an IbPF for the reflected Brownian motion on $[0,1]$, and in \cite{grothaus2016integration}, Grothaus and Vosshall proved a similar formula for the bridge (also providing a more direct formulation based on sophisticated white noise analysis). In these papers, the proofs relied on the underlying Gaussian structure, and on explicit computations using the Cameron-Martin formula. Such features cannot be exploited for generic values of $\delta$. 

The more recent paper \cite{EladAltman2019} derived IbPF on the laws $P^{\delta}$ of Bessel bridges of all dimension $\delta \in (0,3)$, for a specific class of functionals. More precisely, the authors considered the vector space $\mathcal{S}$ generated by all functionals on 
$C([0,1])$ of the form
\begin{align}
\label{exp_functional_continuous_functions}
\begin{cases}
C([0,1]) \to \mathbb{R} \\
X \mapsto \exp \left( - \langle m, X^{2} \rangle \right),
\end{cases}
\end{align}
where $m$ is a finite Borel measure on $[0,1]$, and 
$$\langle m, X^{2} \rangle := \int_0^1 X(r)^2 \, m(d r). $$
Note that functionals of the form \eqref{exp_functional_continuous_functions} play a special role. Indeed, as a consequence of the remarkable additivity property of squared Bessel bridges (see \cite{shiga1973bessel}), such functionals act on the laws of Bessel processes as a Girsanov transformation corresponding to a deternimistic time-change (see Lemma 3.3 in \cite{EladAltman2019}), thus allowing useful explicit computations. Thus, functionals of the type \eqref{exp_functional_continuous_functions} play, in this context, the same role as functionals of the form $ \exp \left( \langle k, X \rangle \right)$, $k \in C([0,1])$, in the papers \cite{zambotti2005integration} and \cite{grothaus2016integration}. These IbPF allowed to identify the structure of SPDEs that should admit the probability measure $P^\delta$, for $\delta <3$, as reversible measure: these equations appear as singular SPDEs of a new kind, in which the drift term involves Taylor remainders at 0 of the local times of the solution (see (1.11)-(1.13) in \cite{EladAltman2019}). 
The IbPF were also exploited to construct weak stationary solutions of these SPDEs in the special cases $\delta=1,2$, using Dirichlet form techniques, see \cite[Section 5]{EladAltman2019} and \cite[Section 4]{eladaltman2019bessel}. 

%
We recall the IbPF obtained in \cite{EladAltman2019}. For $\delta >0$, $b\geq 0$ and $r\in(0,1)$, let $\Sigma^{\delta}_{r}({\rm d}X \,|\, b)$ denote the finite measure on $C([0,1])$ given by
\begin{equation}\label{Sigma}
\Sigma^{\delta}_{r}({\rm d}X \,|\, b) := \frac{p^{\delta}_{r}(b)}{b^{\delta-1}} \,
 E^{\delta} [ {\rm d}X \, | \, X_{r} = b].
\end{equation}
In the above, $E^{\delta}$ denotes the expectation operator corresponding to the probability measure  $P^{\delta}$ on $C([0,1])$, while, for all $r \in (0,1)$, $p^{\delta}_{r}$ denotes the density of the law of $X_{r}$ under $P^{\delta}$. Recall that, for all $b>0$,

\begin{equation}
\label{density}
p^{\delta}_{r}(b) =  \frac{ b^{\delta -1} }{2^{\frac{\delta}{2} -1} (r(1-r))^{\delta/2} \Gamma(\frac{\delta}{2})} \exp \left(- \frac{b^{2}}{2r(1-r)} \right),
\end{equation}
see Chap. XI.3, in \cite{revuz2013continuous}. The measure $\Sigma^{\delta}_{r}(\,\cdot \,|\, b)$ is meant to be the Revuz measure of the diffusion local time of $(u(t,r))_{t\geq 0}$ at level $b\geq 0$, where $u(t, \cdot)_{t \geq 0}$ is the  infinite-dimensional diffusion with invariant measure $P^{\delta}$ (the existence of such a diffusion is merely hypothetical for $\delta \in (0,3)\setminus \{1,2\}$). We introduce a convenient notation: for any sufficiently differentiable function $f: \mathbb{R}_{+} \to \mathbb{R}$, for all $n \in \mathbb{Z}$, and all $b \geq 0$, we set
\[ \mathcal{T}^{\,n}_{b} f := f(b) - \sum_{0\leq j\leq n} \frac{b^{j}}{j!} \, f^{(j)}(0). \]
In words, for all $b \geq 0$, if $n\geq 0$ then $\mathcal{T}^{\,n}_{b} f$ is the Taylor remainder centered at $0$, of order $n+1$, of the function $f$, evaluated at $b$; if $n<0$ then $\mathcal{T}^{\,n}_{b} f$ is simply the value of $f$ at $b$. Finally, defining for all $\delta > 0$
\[ \kappa(\delta) := \frac{(\delta-1)(\delta-3)}{4}, \]
and setting 
\[ k:= \left\lceil \frac{\delta-3}{2} \right\rceil\geq -1, \]
the IbPF obtained in \cite{EladAltman2019} can be written as follows. For all $\delta \in (0,3)\setminus{1}$, $\Phi \in \mathcal{S}$ and $h \in C^{2}_{c}(0,1)$, it holds 
  
%

\begin{equation}\label{onetothree}
\begin{split}
& E^{\delta} (\partial_{h} \Phi (X) ) + E^{\delta} (\langle h '' , X \rangle \, \Phi(X) )= \\ 
& -\kappa(\delta)\int_{0}^{1}  
 h_{r}  \int_0^\infty b^{\delta-4} \Big[ \mathcal{T}^{\,-2k}_{b} \, \Sigma^{\delta}_{r}(\Phi (X) \,|\, \cdot\,) \Big]
  \d b \d r,
\end{split}
\end{equation}
see Theorem 4.1 in \cite{EladAltman2019}. Recall also that the term
$\mathcal{T}^{\,-2k}_{b} \, \Sigma^{\delta}_{r}(\Phi (X) \,|\, \cdot\,)$
appearing in the formulae is actually the Taylor remainder, at order $k$, centered at $0$, of a smooth function of $b^{2}$. In particular, it is of order $b^{2(k+1)}$ as $b \to 0$, which ensures the integral to be convergent. In Theorem 4.1 of \cite{EladAltman2019}, the authors also obtained the following formula for the critical case $\delta=1$:


\begin{equation}
\label{one}
\begin{split}
& E^{1} (\partial_{h} \Phi (X) ) + E^{1} (\langle h '' , X \rangle \, \Phi(X) ) = \frac{1}{4} \int_{0}^{1} \d r \, h_r\, \frac{{\rm d}^{2}}{{\rm d} b^{2}} \, \Sigma^{1}_{r} [\Phi(X) \, | \, b]  \biggr\rvert_{b=0}  .
\end{split}
\end{equation}
We stress that in both of the above propositions, the test functionals are assumed to lie in the space $\mathcal{S}$. Indeed, this assumption allows to perform explicit computations leading to these formulae, and ensures all the quantities (derivatives and integrals) involved to be well-defined. On the other hand, in \cite{eladaltman2020}, similar IbPF were derived for linear functionals: for such functionals, one obtains exactly the same formulae, but through a very different route, based on exact computations involving hypergeometric functions. It is natural to ask whether the IbPF hold for a wide, more general class of functionals. 
For instance, in \cite{zambotti2002integration} and \cite{zambotti2003integration}, the IbPF on $P^{\delta}$ for $\delta \geq 3$ are established for any bounded, continuous functional on $L^{2}(0,1)$, and one could reasonably expect the IbPF for Bessel bridges of dimension $\delta<3$ shown above to hold with the same generality. 

Achieving such an extension is the goal of the present article. 
To do so, it will be convenient to work with the Hilbert space $L^{2}(0,1)$ rather than the Banach space $C([0,1])$. Therefore, as in \cite[Section 5.2]{EladAltman2019}, we consider the vector space $\mathscr{S}$ generated by functionals on $L^{2}(0,1)$ of the form
\begin{align}
\label{exp_functional}
\begin{cases}
L^{2}(0,1) \to \mathbb{R} \\
X \mapsto \exp \left( - \langle \theta, X^{2} \rangle \right),
\end{cases}
\end{align}
where $\theta : [0,1] \to \mathbb{R}_{+} $ is Borel and bounded. Note that any element of $\mathscr{S}$ defines an element of $\mathcal{S}$ by restriction on $C([0,1])$, so the IbPF above hold for elements of $\mathscr{S}$. We  also introduce the following definition: 

\begin{df}
\label{def_c1b}
For any Banach space $(B,\|\cdot\|)$, let $C^{1}_{b}(B)$ be the space of all $\Phi : B \to \mathbb{R}$ which are bounded, $C^{1}$, with bounded Fr\'echet differential. 
Moreover, let $C^{1,1}_{b}(B)$ be the set of all $\Psi \in C^{1}_{b}(B)$ such that there exists $L > 0$ satisfying
\begin{equation}
\label{lipschitz_differential}
 \forall Z, Z' \in L^{1}(0,1), \quad ||| D\Psi(Z) - D\Psi(Z') ||| \leq L \|Z - Z'\|.
\end{equation} 
where $|||\cdot|||$ denotes the operator norm on $B'$, the topological dual of $B$.
\end{df} 

\begin{rk}
\label{bound_double_order_incr}
For any $\Psi \in C^{1,1}_{b}(B)$ and $L$ as in \eqref{lipschitz_differential}, we have
\[ \forall x, y, z \in B, \quad \left| \Psi(x+y+z) - \Psi(x+y) - \Psi(x+z) + \Psi(x) \right| \leq L \|y\| \ \|z\|. \]
\end{rk}
To extend the IbPF from $\mathscr{S}$ to a larger space of functionals, we use an approximation argument consisting of two ingredients:
\begin{enumerate}
\item the space $\mathscr{S}$ is dense in the space of functionals we are considering, for a certain topology to be specified, 
\item the terms appearing in our formulae are all continuous w.r.t. the above mentioned topology.
\end{enumerate}
Note that, in addressing the second point, we obtain rather strong estimates on the Taylor remainders at $0$ of the laws of pinned Bessel bridges, which are interesting in their own right. The major ingredient in deriving these estimates is the additivity property of squared Bessel processes observed in \cite{shiga1973bessel}, and which extends to squared Bessel bridges \cite{pitman1982decomposition}. Along the way we prove the remarkable fact that, for any $\Phi \in C^{1}_{b}(L^{2}(0,1))$ and $r \in (0,1)$, we have
\[  \frac{d}{db} E^{\delta} [\Phi(X) | X_{r} = b]  \biggr\rvert_{b=0} = 0,\]
see Proposition \ref{bound_lip_func}. This remarkable vanishing property was already observed in \cite{EladAltman2019} and \cite{eladaltman2020}, when $\Phi \in \mathcal{S}$ and when $\Phi$ is a linear functional, respectively. The fact that it remains true for general elements $\Phi \in C^{1}_{b}(L^{2}(0,1))$ supports the conjecture, formulated in \cite{EladAltman2019}, that the local times of the solution to Bessel SPDEs should have a vanishing derivative at $0$, see (1.10) in \cite{EladAltman2019}. 

Combining the two points highlighted above, we are able to prove the following results:

\begin{thm}
\label{general_IbPF_13}
Let $\delta \in (1,3)$. Then, for all $\Phi \in C^{1}_{b}( L^{2}(0,1))$  and all $h \in C^{2}_{c}(0,1)$, we have
\begin{equation}
\label{general_formula_13}
\begin{split}
& E^{\delta} (\partial_{h} \Phi (X) ) + E^{\delta} (\langle h '' , X \rangle \, \Phi(X) )= \\ 
& -\kappa(\delta)\int_{0}^{1}  
 h_{r}  \int_0^\infty b^{\delta-4} \Big[ \mathcal{T}^{\,0}_{b} \, \Sigma^{\delta}_{r}(\Phi (X) \,|\, \cdot\,) \Big]
  \d b \d r,
\end{split}
\end{equation}
\end{thm}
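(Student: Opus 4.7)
The plan is to deduce \eqref{general_formula_13} from the known IbPF \eqref{onetothree} by approximation. The crucial algebraic observation is that, for $\delta \in (1,3)$, one has $k = \lceil (\delta-3)/2 \rceil = 0$, so $-2k = 0$ and the right-hand sides of \eqref{onetothree} and \eqref{general_formula_13} have identical form. Hence the proof reduces to (i) constructing a sequence $(\Phi_n) \subset \mathscr{S}$ approximating the given $\Phi \in C^{1}_{b}(L^{2}(0,1))$ in a topology in which both sides of \eqref{onetothree} are continuous, and (ii) verifying that continuity.

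For (i), I would seek $(\Phi_n) \subset \mathscr{S}$ with $\Phi_n \to \Phi$ and $D\Phi_n \to D\Phi$ pointwise on $L^2(0,1)$ under a uniform bound $\sup_n \bigl(\|\Phi_n\|_\infty + \sup_Z |||D\Phi_n(Z)|||\bigr) < \infty$. Depending on the density theory available, this is achieved either directly, by combining cylindrical projections with a Gaussian smoothing and exploiting that quadratic exponentials of the form \eqref{exp_functional} generate a rich enough algebra on finite-dimensional subspaces, or through an intermediate reduction to $C^{1,1}_{b}(L^{2}(0,1))$ followed by the density of $\mathscr{S}$ in that smaller space.

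Given such a sequence, the convergence of the left-hand side of \eqref{onetothree} (for $\Phi_n$) to that of \eqref{general_formula_13} (for $\Phi$) is a routine dominated convergence, using the uniform bounds on $\Phi_n, D\Phi_n$ and the $P^{\delta}$-integrability of $\langle h'', X \rangle$. The challenge lies in the right-hand side, where one must dominate $(r,b) \mapsto h_r \, b^{\delta-4} \bigl|\mathcal{T}^{0}_b \Sigma^{\delta}_r(\Phi_n(X)|\cdot)\bigr|$ uniformly in $n$ by a function integrable on $(0,1) \times (0,\infty)$. For large $b$ the Gaussian decay of $p^{\delta}_r(b)$ kills $\Sigma^{\delta}_r(\Phi_n|b)$ faster than any polynomial, while $\Sigma^{\delta}_r(\Phi_n|0) \int_1^\infty b^{\delta-4}\, db$ is finite since $\delta < 3$. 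The delicate region is $b \to 0$: integrability against $b^{\delta-4}$ there requires a bound $|\mathcal{T}^{0}_b \Sigma^{\delta}_r(\Phi_n|b)| \leq C(r) b^\alpha$ with $\alpha > 3-\delta$. By Proposition \ref{bound_lip_func}, the map $b \mapsto E^{\delta}[\Phi_n(X) \,|\, X_r = b]$ has vanishing derivative at $0$, and combining this with a quantitative second-order Taylor estimate yields a bound of the form $C b^{2}$; since $\delta > 1$, one has $\int_0^1 b^{\delta-2}\, db < \infty$, which suffices.

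The main obstacle is therefore the uniform-in-$n$ quantitative Taylor estimate on $\Sigma^{\delta}_r(\Phi_n|\cdot)$ of order $b^{2}$, with a constant controlled solely by $\|\Phi_n\|_\infty + \sup_Z |||D\Phi_n(Z)|||$. This is the Taylor estimate announced in the abstract, and its derivation is expected to rely on the additivity property of squared Bessel bridges à la Pitman-Yor, which allows one to decouple the randomness of $X$ from the parameter $b$ and expand in $b$ using the Lipschitz-type control on $D\Phi_n$ afforded by Remark \ref{bound_double_order_incr}. Once this uniform estimate is secured, dominated convergence concludes the proof.
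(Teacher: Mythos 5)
Your overall skeleton is the paper's: note $k=0$ for $\delta\in(1,3)$, approximate $\Phi$ by elements of $\mathscr{S}$, apply \eqref{onetothree}, and pass to the limit by dominated convergence, with the small-$b$ domination coming from a Taylor estimate based on the additivity property. However, the key quantitative step is not justified as you state it. You claim a bound $|\mathcal{T}^{0}_{b}E^{\delta}[\Phi_n(X)\,|\,X_r=\cdot\,]|\le C b^{2}$, uniform in $n$, with $C$ controlled solely by $\|\Phi_n\|_\infty+\sup_Z|||D\Phi_n(Z)|||$, obtained by combining the vanishing derivative at $0$ with ``a quantitative second-order Taylor estimate''. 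No such second-order estimate is available at this level of generality: twice differentiability at $b=0$ and the attendant bounds (Propositions \ref{statement_diff_prop}, \ref{diff_prop}, \ref{taylor_est_cond_exp}) are proved only for functionals of the form $\Psi(X^{2})$ with $\Psi$ regular on $L^{1}(0,1)$, and their constants are governed by the $L^{1}$-Lipschitz constants of $\Psi$ and $D\Psi$, not by $\|\Phi\|_{C^{1}(L^{2})}$; whether any of this survives for general $\Phi\in C^{1}_{b}(L^{2}(0,1))$ is explicitly left open (Remark \ref{open_quest_estimates}). Along any approximating family built from $\mathscr{S}$ (e.g.\ through the smoothing $\Phi(\sqrt{X^{2}+1/m})$, whose relevant Lipschitz constants grow like $m$) these constants blow up, so the clean $Cb^{2}$ bound cannot be made uniform. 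What is true, and what the paper proves (Proposition \ref{bound_lip_func}), is the weaker bound $M L\, b^{2}|\log b|$, valid for every functional satisfying $|\Phi(X)-\Phi(Y)|\le L\|X^{2}-Y^{2}\|_{1}^{1/2}$ (hence for all of $C^{1}_{b}(L^{2})$ with $L=\|\Phi\|_{C^{1}}$), obtained not by a second-order expansion but directly: the additivity property reduces the increment to $L\,E^{0}[\|X\|\,|\,X_r=b]$, which is then estimated via its Laplace transform \eqref{expr_cond_exp}. Since $b^{\delta-2}|\log b|$ is integrable near $0$ precisely because $\delta>1$, this log-corrected bound is what makes the dominated convergence work; your argument must route through it (or prove the $Cb^{2}$ bound, which would be a new result), so as written there is a genuine gap at the crux of the proof.

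Two further points. First, your approximation requirement (pointwise convergence of $D\Phi_n$ with $\sup_n|||D\Phi_n|||_\infty<\infty$) is stronger than what can be achieved: every element of $\mathscr{S}$ is a functional of $X^{2}$, so $D\Phi_n(X)$ annihilates directions supported on $\{X=0\}$ (in particular $D\Phi_n(0)=0$), and the paper's construction neither provides nor needs gradient bounds uniform over all indices; it uses iterated limits with index-wise dominations, and only convergence of directional derivatives $\partial_h\Phi_n(X)$ at $X\in C_{+}([0,1])$, plus the uniform Lipschitz-type condition \ref{thd_condition} of Definition \ref{pdi_conv}, which is exactly the input for Propositions \ref{bound_lip_func} and \ref{bound_lip_func_square} (Propositions \ref{density_l1} and \ref{density_l2}). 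Second, the remainder in \eqref{general_formula_13} is that of $\Sigma^{\delta}_{r}(\Phi\,|\,\cdot\,)=\gamma(r,\cdot)\,E^{\delta}[\Phi\,|\,X_r=\cdot\,]$ with $\gamma(r,b)=p^{\delta}_{r}(b)/b^{\delta-1}$, not of the conditional expectation itself; one must first split off the term $(\gamma(r,b)-\gamma(r,0))E^{\delta}[\Phi\,|\,X_r=0]$ as in \eqref{reexpressed_term_13} before the Taylor estimate applies --- a minor but necessary bookkeeping step your sketch conflates.
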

Thus, for Bessel bridges of dimension strictly between $1$ and $3$, the formulae hold for any functional in $C^{1}_{b}(L^{2}(0,1))$. In lower dimensions, we can also generalize the formulae, but to some space distinct from $C^{1}_{b}(L^{2}(0,1))$. Indeed, in order to work, our arguments require some additional regularity on our functionals, this however might just be a technical limitation (see Remark \ref{open_quest_estimates} below). 

\begin{df}
Let $\mathcal{S}C^{1}_{b}\left( L^{1}(0,1) \right)$ be the set of functionals on $L^{2}(0,1)$ of the form 
\begin{equation}
\label{phi_and_psi}
 \Phi(X) = \Psi(X^{2}), \quad X \in L^{2}(0,1),
\end{equation}
where $\Psi \in C^{1}_{b} \left( L^{1}(0,1) \right)$.
\end{df} 
The following remarkable property holds:
\begin{prop}
\label{statement_diff_prop}
Let $\delta \geq 0$ and $\Phi \in \mathcal{S}C^{1}_{b}\left( L^{1}(0,1) \right)$. Then, for all $r \in (0,1)$, the function 
\begin{align*}
\begin{cases} 
\mathbb{R}_{+} \to \mathbb{R} \\
b \mapsto  E^{\delta} [\Phi(X) | X_{r} = b] 
\end{cases}
\end{align*}
is twice differentiable at 0.
\end{prop}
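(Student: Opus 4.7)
I would exploit the hypothesis $\Phi(X)=\Psi(X^{2})$ and the additivity property of squared Bessel bridges (Shiga-Watanabe~\cite{shiga1973bessel}, extended to bridges by Pitman-Yor~\cite{pitman1982decomposition}) to reduce the problem to a controlled Taylor expansion of $\Psi$ around a $b$-independent base point. Let $Q^{\gamma}_{x\to y,T}$ denote the law of a squared Bessel bridge of dimension $\gamma\ge 0$ from $x$ to $y$ on $[0,T]$, defined via additivity when $\gamma=0$ and $x=0$. Under $P^{\delta}[\,\cdot\mid X_{r}=b]$ the law of $X^{2}$ is the concatenation of $Q^{\delta}_{0\to b^{2},r}$ and $Q^{\delta}_{b^{2}\to 0,1-r}$, and applying additivity on each subinterval yields a decomposition
\[
X^{2}\stackrel{d}{=}Y+Z^{(b)},
\]
where $Y$, the concatenation of two independent $Q^{\delta}_{0\to 0}$-bridges, is independent of $b$, $Z^{(b)}$ is the concatenation of $Q^{0}_{0\to b^{2},r}$ and $Q^{0}_{b^{2}\to 0,1-r}$, $Y\perp Z^{(b)}$, and $Z^{(0)}=0$.

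Writing $f(b):=E^{\delta}[\Phi(X)\mid X_{r}=b]=E[\Psi(Y+Z^{(b)})]$, the $C^{1}_{b}$ regularity of $\Psi$ gives
\[
f(b)=E[\Psi(Y)]+E\langle D\Psi(Y),Z^{(b)}\rangle+E[R(Y,Z^{(b)})],
\]
with $|R(y,z)|\le\omega(\|z\|_{L^{1}})\,\|z\|_{L^{1}}$ for a bounded modulus of continuity $\omega$ of $D\Psi:L^{1}\to L^{\infty}$ satisfying $\omega(0^{+})=0$. By independence and Fubini, the linear term equals $\int_{0}^{1}E[(D\Psi(Y))(s)]\,E[Z^{(b)}_{s}]\,\mathrm{d}s$, and additivity at the level of mean functions yields $E[Z^{(b)}_{s}]=b^{2}g_{r}(s)$ for an explicit nonnegative $g_{r}$; hence this contribution equals $A\,b^{2}$ for some $A\in\mathbb{R}$.

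For the remainder, additivity again provides the moment estimates $E\|Z^{(b)}\|_{L^{1}}=O(b^{2})$ and $E\|Z^{(b)}\|_{L^{1}}^{2}=O(b^{4})$ (e.g.\ via the two-point mean function of $Z^{(b)}$, or via a Laplace-transform computation). Splitting $E[\omega(\|Z^{(b)}\|_{L^{1}})\|Z^{(b)}\|_{L^{1}}]$ at $\{\|Z^{(b)}\|_{L^{1}}\le\epsilon\}$ and combining Markov and Cauchy-Schwarz,
\[
E|R(Y,Z^{(b)})|\le \omega(\epsilon)\cdot O(b^{2})+O(b^{3}/\sqrt{\epsilon}),
\]
which is $o(b^{2})$ after sending $b\to 0$ and then $\epsilon\to 0$. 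Combining the pieces, $f(b)=f(0)+Ab^{2}+o(b^{2})$, so that $f$ is twice differentiable at $0$ in the Peano sense with $f'(0)=0$ and $f''(0)=2A$, consistently with Proposition~\ref{bound_lip_func}.

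The main obstacle I anticipate is the rigorous interpretation of the decomposition $X^{2}\stackrel{d}{=}Y+Z^{(b)}$: the factor $Q^{0}_{0\to b^{2},r}$ is a squared Bessel bridge of dimension $0$ started from the absorbing point $0$, so standard SDE theory does not apply. It must be constructed either through the additivity identity $Q^{\delta}_{0\to b^{2}}=Q^{\delta}_{0\to 0}\ast Q^{0}_{0\to b^{2}}$ read as a \emph{definition} of the pushforward, or through the Pitman-Yor excursion-theoretic description of BESQ$(0)$ bridges. Once this object is in hand, verifying the $O(b^{2})$ and $O(b^{4})$ moment bounds and the identity $E[Z^{(b)}_{s}]=b^{2}g_{r}(s)$ is a matter of Laplace-transform computations based on additivity, and the rest of the argument is routine.
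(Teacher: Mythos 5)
Your decomposition $X^{2}\stackrel{d}{=}Y+Z^{(b)}$ with $Y\sim Q^{\delta}[\,\cdot\,|X_{r}=0]$ and $Z^{(b)}\sim Q^{0}[\,\cdot\,|X_{r}=b^{2}]$ is exactly the right starting point (it is Proposition \ref{additivity_pinned} in the paper), but the core estimate of your argument is false, and it leads you to the wrong second derivative. The law $Q^{0}[\,\cdot\,|X_{r}=b^{2}]$ is infinitely divisible with L\'evy measure $b^{2}M^{r}$ (Proposition \ref{levy_meas_pinned_sqred_bridges}); at leading order it behaves like a compound Poisson variable which, with probability of order $b^{2}$, carries a \emph{macroscopic} excursion. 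Hence, while $E\|Z^{(b)}\|_{1}=b^{2}\int\|Z\|_{1}\,dM^{r}(Z)=b^{2}/3$, the second moment is
\[
E\|Z^{(b)}\|_{1}^{2}=b^{2}\int\|Z\|_{1}^{2}\,dM^{r}(Z)+\Bigl(\tfrac{b^{2}}{3}\Bigr)^{2}=O(b^{2}),
\]
not $O(b^{4})$ as you claim. With the correct moment, your Cauchy--Schwarz/Markov splitting gives only $O(b^{2}/\sqrt{\epsilon})$ for the tail term, so the Taylor remainder is \emph{not} $o(b^{2})$. In fact it cannot be: the remainder contributes at order exactly $b^{2}$, because conditionally on the rare event that an excursion occurs, $Z^{(b)}$ is not small and $\Psi$ cannot be linearized. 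Concretely, for $\Psi(Z)=e^{-\langle\theta,Z\rangle}$ one has $E^{\delta}[\Phi(X)|X_{r}=b]=C\,e^{-b^{2}\kappa}$ with $\kappa=\int(1-e^{-\langle\theta,Z\rangle})\,dM^{r}(Z)$, so $f''(0)=-2C\kappa$, whereas your $2A$ would give $-2C\int\langle\theta,Z\rangle\,dM^{r}(Z)$; these differ. The correct answer (Proposition \ref{diff_prop}) is $f''(0)=2\int\bigl(E^{\delta}[\Phi(\sqrt{X^{2}+Z})\,|\,X_{r}=0]-E^{\delta}[\Phi(X)\,|\,X_{r}=0]\bigr)dM^{r}(Z)$, involving genuine increments of $\Phi$, not its differential. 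A secondary issue: $C^{1}_{b}$ does not give uniform continuity of $D\Psi$ on $L^{1}(0,1)$, so the uniform modulus $\omega$ you invoke is an unstated extra hypothesis.

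The paper's route avoids these pitfalls by not Taylor-expanding $\Psi$ at all: it first proves, for exponential functionals in $\mathscr{S}$, the exact identity
\[
E^{\delta}[\Phi(X)|X_{r}=b]=E^{\delta}[\Phi(X)|X_{r}=0]+2\int_{0}^{b}a\int\Bigl(E^{\delta}\bigl[\Phi(\sqrt{X^{2}+Z})\,\big|\,X_{r}=a\bigr]-E^{\delta}[\Phi(X)|X_{r}=a]\Bigr)dM^{r}(Z)\,da
\]
by differentiating the L\'evy--Khintchine formula in the pinning variable, and then extends it to all $\Phi\in\mathcal{S}C^{1}_{b}(L^{1}(0,1))$ (indeed to bounded Lipschitz $\Psi$) by the density construction of Proposition \ref{density_l1} and dominated convergence, using only the Lipschitz bound and $\int\|Z\|_{1}\,dM^{r}(Z)=\tfrac13$ (Lemma \ref{finite_moment_nr}). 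Twice differentiability at $0$ then follows from continuity in $a$ of the inner integral. If you want to salvage your approach, you would have to keep the full increment $E[\Psi(Y+Z^{(b)})-\Psi(Y)]$ and expand it in the L\'evy measure (one excursion contributes at order $b^{2}$, two or more at order $b^{4}$), which essentially reproduces the paper's integral formula.
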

For $\delta=1$, the IbPF can be extended to all elements of $\mathcal{S}C^{1}_{b}\left( L^{1}(0,1) \right)$:
\begin{thm}
\label{general_IbPF_1}
For all $\Phi \in \mathcal{S}C^{1}_{b}\left( L^{1}(0,1) \right)$ and $h \in C^{2}_{c}(0,1)$, we have
\begin{equation}
\label{general_formula_1}
\begin{split}
E^{1} (\partial_{h} \Phi (X) ) + E^{1} (\langle h '' , X \rangle \, \Phi(X) ) = \frac{1}{4} \int_{0}^{1} \d r \, h_r\, \frac{{\rm d}^{2}}{{\rm d} b^{2}} \, \Sigma^{1}_{r} [\Phi(X) \, | \, b]  \biggr\rvert_{b=0}  .
\end{split}
\end{equation}
\end{thm}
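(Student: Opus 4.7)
The plan is to prove Theorem \ref{general_IbPF_1} by approximating a generic $\Phi \in \mathcal{S}C^{1}_{b}(L^{1}(0,1))$ by a sequence of functionals $\Phi_{n} \in \mathscr{S}$, applying the already-established formula \eqref{one} to each $\Phi_{n}$, and passing to the limit termwise. Writing $\Phi(X) = \Psi(X^{2})$ with $\Psi \in C^{1}_{b}(L^{1}(0,1))$, the approximating functionals will have the form $\Phi_{n}(X) = \Psi_{n}(X^{2})$ where $\Psi_{n}$ is a finite linear combination of maps $Y \mapsto \exp(-\langle \theta, Y\rangle)$ with $\theta : [0,1] \to \mathbb{R}_{+}$ bounded Borel. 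Since such exponentials separate points on the positive cone of $L^{1}(0,1)$, standard arguments (Stone--Weierstrass on compact subsets, together with a mollification in the $\theta$-variable) produce a sequence with $\Psi_{n} \to \Psi$ and $D\Psi_{n} \to D\Psi$ pointwise on $L^{1}(0,1)$, with uniform bounds on $\|\Psi_{n}\|_{\infty}$ and on the operator norms $|||D\Psi_{n}|||$.

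Next I would check that each side of \eqref{one} passes to the limit. On the left-hand side, $\Phi_{n} \to \Phi$ pointwise and boundedly so $E^{1}(\langle h'',X\rangle\, \Phi_{n}(X)) \to E^{1}(\langle h'',X\rangle\, \Phi(X))$ by dominated convergence, using $E^{1}[\|X\|_{L^{2}}^{2}] < \infty$. For the gradient term one computes $\partial_{h}\Phi_{n}(X) = 2\langle D\Psi_{n}(X^{2}), X\, h\rangle_{L^{1},L^{\infty}}$; since $\|X\, h\|_{L^{1}} \le \|h\|_{\infty}\, \|X\|_{L^{1}}$ is $P^{1}$-integrable and $D\Psi_{n}(X^{2}) \to D\Psi(X^{2})$ in the weak-$\ast$ sense of $L^{\infty}$, another dominated convergence argument yields $E^{1}(\partial_{h}\Phi_{n}(X)) \to E^{1}(\partial_{h}\Phi(X))$.

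The nontrivial step is convergence of the right-hand side. By Proposition \ref{statement_diff_prop}, the map $b \mapsto \Sigma^{1}_{r}[\Phi(X)\,|\,b]$ is twice differentiable at $0$ for every $\Phi \in \mathcal{S}C^{1}_{b}(L^{1}(0,1))$; combined with the vanishing of the first derivative (Proposition \ref{bound_lip_func}, whose hypothesis $\Phi \in C^{1}_{b}(L^{2}(0,1))$ is satisfied thanks to the chain rule applied to $\Phi(X) = \Psi(X^{2})$), we can rewrite
\[
\frac{1}{2}\,\frac{{\rm d}^{2}}{{\rm d} b^{2}} \Sigma^{1}_{r}[\Phi(X)\,|\,b]\biggr\rvert_{b=0} = \lim_{b \downarrow 0}\, b^{-2}\bigl(\Sigma^{1}_{r}[\Phi(X)\,|\,b] - \Sigma^{1}_{r}[\Phi(X)\,|\,0]\bigr).
\]
The key analytical input, which I would extract from the Taylor estimates proved earlier in the paper via the Shiga--Watanabe/Pitman--Yor additivity decomposition of squared Bessel bridges, is a uniform bound of the form
\[
\sup_{b \in (0, b_{0}]} b^{-2}\,\bigl|\Sigma^{1}_{r}[\Psi_{n}(X^{2}) - \Psi(X^{2})\,|\,b] - \Sigma^{1}_{r}[\Psi_{n}(X^{2}) - \Psi(X^{2})\,|\,0]\bigr| \le C_{r}\, \omega_{n},
\]
with $\omega_{n} \to 0$ controlled by the approximation data $(\Psi_{n} - \Psi, D\Psi_{n} - D\Psi)$ and $C_{r}$ locally integrable in $r$. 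Using this bound together with the pointwise convergence of the difference quotients, one can commute the limits $n \to \infty$ and $b \downarrow 0$ and integrate against $h_{r}$ on $(0,1)$ by dominated convergence, concluding \eqref{general_formula_1}.

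The main obstacle will be establishing the uniform estimate in the displayed inequality above with a modulus $\omega_{n}$ that converges to zero under an approximation mode for which $\mathscr{S}$ is simultaneously dense in $\mathcal{S}C^{1}_{b}(L^{1}(0,1))$. The additivity decomposition $X^{2} = Y^{2} + Z^{2}$ of the squared Bessel bridge pinned at $b$, with $Y$ the squared Bessel bridge pinned at $0$ and $Z$ a small squared Brownian excursion of size $O(b^{2})$, suggests applying Remark \ref{bound_double_order_incr} to $\Psi_{n} - \Psi$; this produces a quadratic-in-$b$ cancellation only if $\Psi_{n} - \Psi$ enjoys the $C^{1,1}_{b}$ regularity, which is not automatic. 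Resolving this will likely require either refining the approximation so that $\Psi_{n}$ is uniformly $C^{1,1}_{b}$ (e.g.\ via a Moreau--Yosida-type smoothing on $L^{1}(0,1)$), or sharpening the Taylor estimates so that the residual quadratic term depends only on $D\Psi_{n} - D\Psi$ tested against integrable quantities; this technical tradeoff is the heart of the argument.
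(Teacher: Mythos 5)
Your overall skeleton (approximate by elements of $\mathscr{S}$, apply \eqref{one}, pass to the limit termwise) is the same as the paper's, and your treatment of the two left-hand terms is fine. But the way you handle the right-hand side leaves a genuine gap, and you acknowledge it yourself: to commute $n\to\infty$ with $b\downarrow 0$ you postulate a uniform-in-$b$ bound
\[
\sup_{b\in(0,b_0]} b^{-2}\bigl|\Sigma^{1}_{r}[(\Psi_n-\Psi)(X^2)\,|\,b]-\Sigma^{1}_{r}[(\Psi_n-\Psi)(X^2)\,|\,0]\bigr|\le C_r\,\omega_n,\qquad \omega_n\to 0,
\]
and you correctly observe that producing such a second-order increment estimate via Remark \ref{bound_double_order_incr} would require $C^{1,1}_b$-type regularity of $\Psi_n-\Psi$, which the hypothesis $\Psi\in C^{1}_{b}(L^{1}(0,1))$ does not supply. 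Your suggested fixes (a uniformly $C^{1,1}_b$ smoothing, or sharpened Taylor estimates) are left entirely open, so the heart of the argument is missing. In effect you are trying to run, for $\delta=1$, the argument that the paper reserves for $\delta\in(0,1)$ (Theorem \ref{general_IbPF_01}), where the $\mathcal{S}C^{1,1}_{b}$ assumption is imposed precisely because a genuine second-order Taylor remainder, integrated over all $b>0$, must be controlled.

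The point you miss is that for $\delta=1$ no uniform-in-$b$ estimate is needed at all, because the formula only involves the second derivative \emph{evaluated at} $b=0$, and Proposition \ref{diff_prop} gives it in closed form:
\[
\frac{d^{2}}{db^{2}}E^{1}[\Phi(X)\,|\,X_r=b]\Bigr\rvert_{b=0}
=2\int\Bigl(E^{1}\bigl[\Phi\bigl(\sqrt{X^{2}+Z}\bigr)\,\big|\,X_r=0\bigr]-E^{1}[\Phi(X)\,|\,X_r=0]\Bigr)\,dM^{r}(Z),
\]
valid for any $\Phi=\Psi(X^2)$ with $\Psi$ bounded Lipschitz, hence for $\Phi$ and for every $\Phi^{d}_{n,k}$. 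Along the approximating family of Proposition \ref{density_l1}, the integrand converges pointwise in $Z$ (condition (i) of $(A_1)$) and is dominated by a constant times $\|Z\|_{1}$ (condition (iii)), which is $M^{r}$-integrable by Lemma \ref{finite_moment_nr}; since $\int_0^1 |h(r)|(r(1-r))^{-1/2}dr<\infty$, dominated convergence handles the second-derivative term after the conversion \eqref{reexpressed_term_1} from $\Sigma^{1}_{r}$ to conditional expectations (a step your write-up also glosses over). So only first-order Lipschitz information is needed, and the obstacle you flag as the "heart of the argument" simply does not arise in the paper's route; as written, your proof is incomplete at exactly that point.
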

Finally, to state the result for $\delta \in (0,1)$, we need to introduce a more particular space:
\begin{df}
\label{def_flip}
Let  $\mathcal{S}C^{1,1}_{b}\left( L^{1}(0,1) \right)$ be the set of functionals on $L^{2}(0,1)$ of the form 
\begin{equation}
\label{phi_psi_lip}
 \Phi(X) = \Psi(X^{2}), \quad X \in L^{2}(0,1),
\end{equation}
where $\Psi \in C^{1,1}_{b}(L^{1}(0,1))$.
\end{df}
We prove the following result:
\begin{thm}
\label{general_IbPF_01}
Let  $\delta \in (0,1)$. Then, for all $\Phi \in \mathcal{S}C^{1,1}_{b}\left( L^{1}(0,1) \right)$ and $h \in C^{2}_{c}(0,1)$, we have
\begin{equation}
\label{general_formula_01}
\begin{split}
& E^{\delta} (\partial_{h} \Phi (X) ) + E^{\delta} (\langle h '' , X \rangle \, \Phi(X) )= \\ 
& -\kappa(\delta)\int_{0}^{1}  
 h_{r}  \int_0^\infty b^{\delta-4} \Big[ \mathcal{T}^{\,-2}_{b} \, \Sigma^{\delta}_{r}(\Phi (X) \,|\, \cdot\,) \Big]
  \d b \d r.
\end{split}
\end{equation}
\end{thm}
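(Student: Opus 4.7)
The plan is to derive \eqref{general_formula_01} from the already-established formula \eqref{onetothree} (valid for $\Phi \in \mathscr{S}$) by an approximation argument. Writing $\Phi(X) = \Psi(X^{2})$ with $\Psi \in C^{1,1}_{b}(L^{1}(0,1))$, I would construct a sequence $(\Psi_n)_{n \geq 1}$ of finite linear combinations of the exponentials $U \mapsto \exp(-\langle \theta, U\rangle)$ with bounded nonnegative Borel $\theta$, so that $\Phi_n(X) := \Psi_n(X^{2})$ belongs to $\mathscr{S}$. We would require $\Psi_n \to \Psi$ and $D\Psi_n \to D\Psi$ pointwise on $L^{1}(0,1)$, with uniform bounds on $\|\Psi_n\|_\infty$, $\|D\Psi_n\|_\infty$ and on the Lipschitz constant of $D\Psi_n$ (see \eqref{lipschitz_differential}). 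Then \eqref{onetothree} applies to each $\Phi_n$, and the conclusion follows by passing to the limit on both sides.

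\textbf{Approximation.} To construct $(\Psi_n)$, I would first project $\Psi$ onto a nested family of finite-dimensional subspaces of $L^{1}(0,1)$ (e.g., by conditional expectation with respect to finer and finer partitions), thus reducing the problem to approximating $C^{1,1}_b$ functions on $\mathbb{R}^d$. On each $\mathbb{R}^d$, Stone-Weierstrass on a large cube combined with a mollification step provides linear combinations of exponentials that approximate uniformly while preserving $C^{1,1}$ bounds.

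\textbf{Passage to the limit.} The left-hand side of \eqref{general_formula_01} converges by dominated convergence, using the pointwise convergence of $\Psi_n$, $D\Psi_n$ together with their uniform bounds. The right-hand side requires a uniform $O(b^4)$ bound on the Taylor remainder $\mathcal{T}^{-2}_b \Sigma^{\delta}_r(\Phi(X)|\cdot)$ at $0$. The key input here is the Shiga-Watanabe additivity of squared Bessel bridges, which yields the distributional identity: under $E^{\delta}[\,\cdot\,|\, X_r = b]$, the process $X^{2}$ has the same law as $Y + b^2 g_r$, where $Y$ is a random element of $L^{1}(0,1)$ whose law does not depend on $b$ (namely, the law of $X^{2}$ under $P^{\delta}[\,\cdot\,|\, X_r = 0]$), and $g_r \in L^{\infty}(0,1)$ is the deterministic function equal to $(t/r)^2$ on $[0,r]$ and $((1-t)/(1-r))^2$ on $[r,1]$. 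Consequently,
\[
E^{\delta}\bigl[\Phi(X)\,\bigl|\,X_r = b\bigr] = E\bigl[\Psi(Y + b^2 g_r)\bigr],
\]
and the $C^{1,1}_b$ regularity of $\Psi$ gives
\[
\bigl|\Psi(Y + b^2 g_r) - \Psi(Y) - b^2 \langle D\Psi(Y), g_r\rangle\bigr| \leq \tfrac{L}{2} b^4 \|g_r\|_{L^1}^2.
\]
Multiplying by $p^{\delta}_r(b)/b^{\delta - 1}$, which is smooth in $b^2$, and expanding its Gaussian factor to first order in $b^2$, one obtains the desired $O(b^4)$ bound on $\mathcal{T}^{-2}_b \Sigma^{\delta}_r(\Phi(X)|\cdot)$, with constant controlled only by $\|\Psi\|_{C^{1,1}_b}$ and $r$. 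Hence the integrand $b^{\delta - 4} \mathcal{T}^{-2}_b \Sigma^{\delta}_r(\Phi(X)|\cdot)$ is dominated by $C b^{\delta}$ near $0$ (integrable since $\delta > -1$) and by Gaussian decay at infinity, uniformly in $n$, and dominated convergence concludes.

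\textbf{Main obstacle.} The crux is the uniform $O(b^4)$ Taylor-remainder bound, which requires combining the additivity decomposition $X^{2} \stackrel{d}{=} Y + b^2 g_r$ with the $C^{1,1}_b$ Lipschitz control of $D\Psi$. This is precisely why the theorem is stated on $\mathcal{S}C^{1,1}_b(L^{1}(0,1))$ rather than on the larger $C^{1}_b(L^{2}(0,1))$ of Theorem \ref{general_IbPF_13}: cancelling the singular factor $b^{\delta - 4}$ for $\delta \in (0,1)$ requires two orders of regularity in $b^2$, which $C^1_b$ regularity alone does not provide. A subsidiary technical issue is to ensure that the approximating sequence preserves uniform bounds on $\mathrm{Lip}(D\Psi_n)$ through both the finite-dimensional projection and the Stone-Weierstrass-type approximation, which can be achieved by careful mollification.
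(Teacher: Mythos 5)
Your overall architecture (approximate by elements of $\mathscr{S}$, use \eqref{onetothree}, re-express the $\Sigma^{\delta}_r$ remainder through the conditional expectations, and pass to the limit via an $O(b^4)$ Taylor estimate) is the same as the paper's, but the step you call the ``key input'' is wrong. The additivity property (Proposition \ref{additivity_pinned}) gives, under $Q^{\delta}[\,\cdot\,|\,X_r=b^2]$, the decomposition $X \overset{d}{=} Y + Z$ where $Y \sim Q^{\delta}[\,\cdot\,|\,X_r=0]$ and $Z \sim Q^{0}[\,\cdot\,|\,X_r=b^2]$ are independent; the second summand is a genuinely \emph{random} process (a pinned $0$-dimensional squared Bessel bridge), not the deterministic path $b^2 g_r$ you claim. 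You can see this directly from \eqref{expr_cond_exp}: if $X^2$ under $Q^{0}[\,\cdot\,|\,X_r=b^2]$ were deterministic, the exponent of $E^{0}[\exp(-\lambda\|X\|^2)\,|\,X_r=b]$ would be linear in $\lambda$, whereas it involves $f(\sqrt{2\lambda}r)$, which is not linear. Consequently your one-line bound $\bigl|\Psi(Y+b^2g_r)-\Psi(Y)-b^2\langle D\Psi(Y),g_r\rangle\bigr|\le \tfrac{L}{2}b^4\|g_r\|_1^2$ does not apply, and with it the whole uniform $O(b^4)$ estimate --- the crux of the theorem --- collapses: the dependence of the law of $Z$ on $b^2$ is itself only controlled through the infinitely divisible structure of the convolution semigroup $\bigl(Q^{0}[\,\cdot\,|\,X_r=x]\bigr)_{x\ge 0}$.

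The paper's route through this difficulty is precisely what your shortcut skips: one exhibits the L\'evy measure $M^{r}$ of that semigroup (Proposition \ref{levy_meas_pinned_sqred_bridges}), shows $\int\|Z\|_1\,dM^{r}(Z)=\tfrac13$ (Lemma \ref{finite_moment_nr}), derives the integral representation \eqref{integral_formula_for_cond_exp} for $x\mapsto Q^{\delta}[\Psi(X)\,|\,X_r=x]$, and then differentiates once more in $x$: the second derivative is a double integral against $M^{r}(dZ)\,M^{r}(dZ')$ of the second difference $\Psi(X+Z+Z')-\Psi(X+Z)-\Psi(X+Z')+\Psi(X)$, which is exactly where the $C^{1,1}_b$ hypothesis enters, via the bound $L\|Z\|_1\|Z'\|_1$ (Propositions \ref{diff_prop} and \ref{taylor_est_cond_exp}). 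This yields twice differentiability at $b=0$ and $|\mathcal{T}^{2}_{0,b}E^{\delta}[\Phi(X)\,|\,X_r=\cdot\,]|\le Lb^4$, which is what makes the singular integral against $b^{\delta-4}$ converge for $\delta\in(0,1)$. A secondary remark: you do not need (and probably cannot easily arrange) bounds on $\mathrm{Lip}(D\Psi_n)$ uniform over the entire approximating family; the paper only needs such control separately along each index, taking iterated limits with three successive applications of dominated convergence (Propositions \ref{density_l1} and \ref{density_l1_lip}). But the fatal issue is the false distributional identity and the resulting unproved $O(b^4)$ remainder bound.
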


The paper is organised as follows: after introducing the notations and stating some useful facts on the laws of squared Bessel bridges in Section \ref{section_notations_and_facts}, we prove density results for $\mathscr{S}$ in large spaces of functionals in Section \ref{section_density}, and establish  Taylor estimates for the laws of pinned Bessel bridges in Section \ref{section_estimates}. Putting these results together, we proceed in Section \ref{section_ibpf} to the proofs of Theorems \ref{general_IbPF_13}, \ref{general_IbPF_1} and \ref{general_IbPF_01}.

{\bf Acknowledgements.}
I am very grateful to Lorenzo Zambotti for innumerable precious discussions. I would also like to thank Cyril Labb\'{e} and Martin Hairer for a useful conversation on the subject. Work on this paper started as I was completing my PhD at the LPSM (UMR 8001) in Sorbonne Universit\'{e} in Paris.

\section{Notations and basic facts}
\label{section_notations_and_facts}

\subsection{Notations}

We need to introduce notations for the various norms and vector spaces that we will consider. 

\begin{nota}
In the sequel, for all $p \in \{1, 2, \infty\}$, we denote by $\|\cdot\|_{p}$ the $L^{p}$ norm on $L^{p}(0,1)$. In the special case $p=2$, we will simply write $\|\cdot\|$ for $\|\cdot\|_{2}$, and denote by $\langle \cdot, \cdot \rangle$ the corresponding inner product.
Moreover, for all $p \in \{1, 2\}$ and all functional $\Phi : L^{p}(0,1) \to \mathbb{R}$, we set:
\[ \| \Phi \|_{\infty} := \sup_{X \in L^{p}(0,1)}{| \Phi(X) |}. \]
Furthermore, for all $\Phi \in C^{1}_{b}(L^{p}(0,1))$, we set
\[ ||| D \Phi |||_{\infty} := \sup_{X \in L^{p}(0,1)}{||| D \Phi(X) |||}, \] 
where $||| \cdot |||$ denotes the norm on $\left( L^{p}(0,1) \right)'$, and  we set
\[ \| \Phi \|_{C^{1}} :=  \| \Phi \|_{\infty} +  ||| D \Phi |||_{\infty} . \]
\end{nota}

We also introduce the following shorthand notations:

\begin{nota}
For all $p \in [1, \infty]$, we denote by $L^{p}_{+}(0,1)$ the subset of nonnegative function in $L^{p}(0,1)$. Moreover, we use the shorthand notation $C([0,1]) := C([0,1], \mathbb{R})$ as well as $C_{+}([0,1]) := C([0,1], \mathbb{R}_{+})$
\end{nota}

\subsection{Squared Bessel bridges and Bessel bridges}

In the sequel, for all $\delta, x, y \geq 0$ ,  we will denote by $Q^{\delta}_{x,y}$ the law, on $C_{+}([0,1])$, of the $\delta$-dimensional squared Bessel bridge between $x$ and $y$ on the interval $[0,1]$ (see Chapter XI.3 of \cite{revuz2013continuous} for the precise definition). Moreover, for any $\delta, a, b \geq 0$, we shall denote by $P^{\delta}_{a,b}$ the law, on $C_{+}([0,1])$, of the $\delta$-dimensional Bessel bridge between $a$ and $b$, and by $E^{\delta}_{a,b}$ the associated expectation operator. Recall that, by definition, $P^{\delta}_{a,b}$ is the image of $Q^{\delta}_{a^{2},b^{2}}$ under the map 

\begin{align}
\label{sqrt_map}
\begin{cases}
C([0,1], \mathbb{R}_{+}) \to C([0,1], \mathbb{R}_{+}) \\
X \mapsto \sqrt{X}.
\end{cases}
\end{align}
We will use the shorthand notations $Q^{\delta}$ and $P^{\delta}$ for $Q^{\delta}_{0,0}$ and $P^{\delta}_{0,0}$, respectively. The family of probability measures $\left(Q^{\delta}_{x,0}\right)_{\delta, x \geq 0}$ satisfies a remarkable additivity property. To state it, we introduce the following:

\begin{df}
For any  two laws $\mu, \nu$ on $C_{+}([0,1])$, let $\mu \ast \nu$ denote the convolution of $\mu$ and $\nu$, i.e. the image of $\mu \otimes \nu$ under the map of addition of paths:
\[ C_{+}([0,1]) \times C_{+}([0,1]) \to C_{+}([0,1]), \quad (x,y) \mapsto x+y. \]
\end{df}

\noindent The following statement is a particular case of Theorem 5.8 in  \cite{pitman1982decomposition}:

\begin{prop}
\label{levy}
For all $x,x', \delta, \delta'$, we have the following equality of probability laws on $C_{+}([0,1])$:
\[ Q^{\delta}_{x,0} \ast Q^{\delta'}_{x',0} = Q^{\delta + \delta'}_{x + x',0}. \]
\end{prop}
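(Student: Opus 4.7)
The plan is to reduce the statement to the classical Shiga--Watanabe additivity property of \emph{unconditioned} squared Bessel processes, which asserts that if $X$ and $X'$ are independent squared Bessel processes of dimensions $\delta$ and $\delta'$ starting from $x$ and $x'$ respectively, then $X+X'$ is a squared Bessel process of dimension $\delta+\delta'$ starting from $x+x'$. The bridges $Q^{\delta}_{x,0}$ would then be recovered via Doob conditioning on the terminal value at time $1$.

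Concretely, I would take $X$ and $X'$ independent and unconditioned as above, and set $Y := X+X'$; by additivity, $Y$ is an unconditioned squared Bessel process of dimension $\delta+\delta'$ starting at $x+x'$. The crucial observation is that, since $X$ and $X'$ are nonnegative, one has $Y_1 = 0$ if and only if $X_1 = X'_1 = 0$, so conditioning $Y$ on $\{Y_1 = 0\}$ amounts to conditioning the pair $(X, X')$ on $\{X_1 = X'_1 = 0\}$. By independence of $X$ and $X'$, this joint conditional law factorises as $Q^{\delta}_{x,0} \otimes Q^{\delta'}_{x',0}$. Pushing forward by the addition map then identifies the image with the law of $Y$ conditioned on $\{Y_1 = 0\}$, that is, $Q^{\delta+\delta'}_{x+x',0}$, which yields the claimed identity.

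The main obstacle is to justify the conditioning on the null event $\{X_1 = X'_1 = 0\}$. I would realise it through a Doob $h$-transform using the explicit transition density of the unconditioned squared Bessel semigroup: each bridge $Q^{\delta}_{x,0}$ is obtained by reweighting the unconditioned law with $h^{\delta}_{t}(y) := q^{\delta}_{1-t}(y, 0)$, where $q^{\delta}_{t}(\cdot,\cdot)$ denotes that transition density, and the product $h$-transform on the independent pair $(X, X')$ preserves independence while realising the joint conditioning. Compatibility with the single $h$-transform for $Y$, based on $q^{\delta+\delta'}_{1-t}(\cdot, 0)$, then reduces to additivity of the one-dimensional marginal densities, which is the Shiga--Watanabe property at a single time and can be checked directly from the explicit form of $q^{\delta}_{t}$. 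Alternatively, one could proceed via smooth approximation, conditioning on $\{Y_1 \in [0, \varepsilon]\}$ and passing to the limit $\varepsilon \to 0$ using weak continuity of the bridge laws in the endpoint.
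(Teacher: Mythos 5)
Your argument is correct, but it follows a genuinely different route from the paper: the paper does not prove this proposition at all, it simply quotes it as a special case of Theorem 5.8 in Pitman--Yor \cite{pitman1982decomposition}, whose own proof goes through the explicit Laplace functional $Q^{\delta}_{x,y}\left[\exp(-\langle\theta,X\rangle)\right]$, which factorises into a term exponential-linear in $x$ and a term raised to the power $\delta$ (this is also the formulation \eqref{levy_khintchine_bridges} that the paper exploits later). You instead reduce the bridge statement to the pathwise Shiga--Watanabe additivity of the \emph{unconditioned} squared Bessel processes and transport it to the bridges by a Doob $h$-transform at the terminal time. This works: on $\mathcal{F}_s$, $s<1$, the bridge $Q^{\delta}_{x,0}$ has density $(1-s)^{-\delta/2}\exp\bigl(-X_s/(2(1-s))+x/2\bigr)$ with respect to $Q^{\delta}_{x}$, the product of two such densities is a function of $X_s+X_s'$ alone and coincides with the corresponding density for dimension $\delta+\delta'$ started at $x+x'$, so the pushforward by addition agrees with $Q^{\delta+\delta'}_{x+x',0}$ on every $\mathcal{F}_s$ and hence, by path continuity, on $C([0,1])$. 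The Laplace-functional route buys uniformity (it treats all $\theta$ at once and directly yields the L\'evy--Khintchine structure used in Section 2), while your route is more probabilistic and makes the additivity visible at the level of paths.

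Two small points to tighten. First, for $\delta<2$ the ``density'' $q^{\delta}_{1-t}(y,0)$ is not a finite pointwise evaluation (it diverges like $z^{\delta/2-1}$ as $z\to 0$), so the $h$-function must be taken as the normalised limit $\lim_{z\to0}q^{\delta}_{1-t}(y,z)/q^{\delta}_{1}(x,z)\propto(1-t)^{-\delta/2}e^{-y/(2(1-t))+x/2}$; with this interpretation the case $\delta=0$ (where $\{X_1=0\}$ even has positive probability) is also covered. Second, the compatibility you need is not the convolution (``additivity'') identity for the one-time marginals but the multiplicative factorisation of these normalised endpoint limits, namely that the product of the two $h$-ratios equals the $h$-ratio for dimension $\delta+\delta'$ at $x+x'$; the Gamma factors cancel between numerator and denominator, so this is indeed immediate from the explicit formula, as you assert, but it is a different identity from the one you name. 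Your heuristic first paragraph (conditioning on the null event $\{X_1=X_1'=0\}$) should be regarded only as motivation; the $h$-transform or the $\{Y_1\le\varepsilon\}$ approximation is what carries the proof.
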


\noindent Note that this is an equivalent for the bridges of the well-known additivity property of squared Bessel processes first observed by Shiga and Watanabe in \cite{shiga1973bessel}. Note also that this relation in particular says that the families of probability measures $\left( Q^{0}_{x,0} \right)_{x \geq 0}$ and $\left( Q^{\delta} \right)_{\delta \geq 0}$ are convolution semi-groups on $C_{+}([0,1])$. In \cite{pitman1982decomposition}, the authors constructed the corresponding L\'{e}vy measures $M_{0}$ and $N_{0}$ on $C_{+}([0,1])$ (they actually provided an explicit construction in the case of the unconstrained squared Bessel bridges, but stressed that the case of the bridges can be dealt similarly, see section (5.4) in that article). The measures $M_{0}$ and $N_{0}$ are characterized by the fact that $M_{0}(\{ 0 \}) = N_{0}(\{ 0 \}) =0$ and, for all $\delta, x \geq 0$ and all $\theta : [0,1] \to \mathbb{R}_{+}$ bounded and Borel, we have
\begin{align}
\label{levy_khintchine_bridges}
Q^{\delta}_{x,0} \left[ \exp \left( - \langle \theta,  X \rangle \right) \right] = &\exp \left( - x \int \left( 1 - \exp \left( - \langle \theta, X \rangle \right) \right) dM_{0}(X) \right) \\ 
\nonumber &\exp \left( - \delta \int \left( 1 - \exp \left( - \langle \theta, X \rangle \right) \right) dN_{0}(X) \right). 
\end{align}

\subsection{Laws of pinned squared Bessel bridges as a convolution semigroup on $C_{+}([0,1])$}

For all $\delta \geq 0$, $x \geq 0$ and $r \in (0,1)$, we denote by $ Q^{\delta} \left[ \ \cdot \ | X_{r} = x \right]$ the law of a squared Bessel bridge between $0$ and $0$ conditioned on the event $\{X_{r}=x\}$ (to which we shall also refer as the law of a \textit{pinned squared Bessel bridge}). Such a conditioning is degenerate, but we can give a canonical meaning to it using the Markov structure of squared Bessel bridges (see chapter XI.3 in \cite{revuz2013continuous}). Note that $Q^{\delta} \left[ \ \cdot \ | X_{r} = x \right]$ is the image of $Q^{\delta}_{x,0} \otimes Q^{\delta}_{x,0}$ under the reversal, scaling, and concatenation map $S_{r}:  C_{+}([0,1]) \times C_{+}([0,1]) \to C_{+}([0,1])$ defined, for all $X, Y \in C_{+}([0,1])$, by
\begin{align}
\label{map_sr}
S_{r} (X,Y) : 
\tau \mapsto
\begin{cases} & r X \left( \frac{r-\tau}{r} \right), \quad \text{if} \ 0 \leq \tau \leq r \\
& (1-r) Y \left(\frac{\tau-r}{1-r} \right), \quad \text{if} \ r < \tau \leq 1.
\end{cases} 
\end{align} 
With this representation, we see that Proposition \ref{levy} implies the following:

\begin{prop}
\label{additivity_pinned}
For all $r \in (0,1)$ and all $x,x', \delta, \delta'$, we have the following equality of probability laws on $C_{+}([0,1])$:
\[ Q^{\delta} \left[ \ \cdot \ | X_{r} = x \right] \ast Q^{\delta'} \left[ \ \cdot \ | X_{r} = x' \right] = Q^{\delta + \delta'} \left[ \ \cdot \ | X_{r} = x + x' \right]. \]
\end{prop}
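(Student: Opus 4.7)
The plan is to reduce everything to the unconditioned additivity (Proposition \ref{levy}) via the representation of pinned squared Bessel bridges through the reversal/scaling/concatenation map $S_{r}$ introduced in \eqref{map_sr}. The essential point is that, although $S_{r}$ rescales and time-reverses, it is \emph{linear} in each of its two arguments: for all $X, X', Y, Y' \in C_{+}([0,1])$,
\[ S_{r}(X+X', Y+Y') = S_{r}(X,Y) + S_{r}(X',Y'), \]
which is immediate from formula \eqref{map_sr}. Once this is noted, the additivity of the pinned laws should follow by pushing forward along $S_r$.

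More precisely, let $A, B$ be independent with common law $Q^{\delta}_{x,0}$, and let $A', B'$ be independent of each other and of $(A,B)$, with common law $Q^{\delta'}_{x',0}$. By the recalled representation, $S_{r}(A,B)$ has law $Q^{\delta}[\,\cdot\, | X_{r}=x]$ and $S_{r}(A',B')$ has law $Q^{\delta'}[\,\cdot\, | X_{r}=x']$, and these two random paths are independent. By the convolution definition, the law of their sum is exactly $Q^{\delta}[\,\cdot\, | X_{r}=x]\ast Q^{\delta'}[\,\cdot\, | X_{r}=x']$. Using the linearity of $S_{r}$, this sum equals $S_{r}(A+A',\,B+B')$.

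Now I would apply Proposition \ref{levy} to each component: since $A \perp A'$ with laws $Q^{\delta}_{x,0}$ and $Q^{\delta'}_{x',0}$ respectively, $A+A'$ has law $Q^{\delta+\delta'}_{x+x',0}$, and similarly for $B+B'$. Moreover $A+A'$ and $B+B'$ are independent, since $(A,A') \perp (B,B')$ by construction. Therefore the pair $(A+A', B+B')$ has law $Q^{\delta+\delta'}_{x+x',0} \otimes Q^{\delta+\delta'}_{x+x',0}$, and pushing it forward by $S_{r}$ yields precisely $Q^{\delta+\delta'}[\,\cdot\, | X_{r}=x+x']$ by the representation recalled above. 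This gives the claimed equality of laws.

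There is no real obstacle here: the whole argument hinges on the two ingredients already supplied by the paper, namely the $S_{r}$-representation of pinned squared Bessel bridges and the Shiga--Watanabe--Pitman--Yor additivity of Proposition \ref{levy}; the bookkeeping step to check is just the linearity of $S_{r}$ in each slot, which is visible on sight from \eqref{map_sr}.
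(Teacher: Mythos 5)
Your proof is correct and follows exactly the route the paper intends: the paper leaves the statement as an immediate consequence of the $S_{r}$-representation of pinned squared Bessel bridges together with Proposition \ref{levy}, and your write-up simply makes explicit the additivity of $S_{r}$ in each argument and the componentwise application of the unconditioned additivity, which is the same argument. No gaps.
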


A very important consequence for us will be the fact that $\left( Q^{0} \left[ \ \cdot \ | X_{r} = x \right] \right)_{x \geq 0}$ forms a convolution semigroup of probability laws on $C_{+}([0,1])$. Exploiting the constructions of Pitman-Yor, we can furthermore exhibit the associated L\'{e}vy measure:

\begin{prop} 
\label{levy_meas_pinned_sqred_bridges}
Let $r \in (0,1)$. There exists a measure $M^{r}$ on $C_{+}([0,1])$ such that $M^{r}(\{ 0 \}) = 0$ and, for all $x \geq 0$ and all $\theta : [0,1] \to \mathbb{R}_{+}$ bounded and Borel, we have
\begin{equation}
\label{Levy_meas}
 Q^{0} \left[ \exp \left( - \langle \theta,  X \rangle \right) | X_{r} =x \right] = \exp \left( - x \int \left( 1 - \exp \left( - \langle \theta, X \rangle \right) \right) dM^{r}(X) \right).
\end{equation}
\end{prop}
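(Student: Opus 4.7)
The plan is to combine Proposition \ref{additivity_pinned} with the explicit representation of $Q^0[\,\cdot\mid X_r=x]$ via the map $S_r$. Setting $\delta=\delta'=0$ in Proposition \ref{additivity_pinned} shows that $(Q^0[\,\cdot\mid X_r=x])_{x\geq 0}$ is a convolution semigroup on $C_+([0,1])$ linearly indexed by $x\geq 0$; hence the sought identity \eqref{Levy_meas} is precisely its Lévy--Khintchine representation, and it remains only to exhibit the Lévy measure $M^r$ explicitly.

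Fix a bounded Borel $\theta:[0,1]\to\mathbb{R}_+$. A direct change of variables inside the two branches of $S_r$ yields
\begin{equation*}
\langle \theta, S_r(X,Y)\rangle = \langle \theta_1^r, X\rangle + \langle \theta_2^r, Y\rangle,
\end{equation*}
for explicit bounded Borel functions $\theta_1^r,\theta_2^r:[0,1]\to\mathbb{R}_+$, namely the pullbacks of $\theta$ under the reversal/rescaling of the left and right halves. Since by construction $Q^0[\,\cdot\mid X_r=x]$ is the image under $S_r$ of a product measure of the form $Q^0_{a,0}\otimes Q^0_{b,0}$ with $a,b$ proportional to $x$, the Laplace transform factorises, and applying \eqref{levy_khintchine_bridges} with $\delta=0$ to each factor rewrites it as $\exp(-x\, I(\theta))$, where $I(\theta)$ is a sum of two integrals of $1-e^{-\langle\theta_i^r,\cdot\rangle}$ against $M_0$. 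Introducing the scaling/reversal/zero-extension maps $\phi_r^1,\phi_r^2:C_+([0,1])\to C_+([0,1])$ dual to $\theta\mapsto\theta_i^r$ (so that $\langle\theta,\phi_r^i(Z)\rangle=\langle\theta_i^r,Z\rangle$ for every $\theta$), I would set
\begin{equation*}
M^r:= c_1(\phi_r^1)_{*} M_0 + c_2(\phi_r^2)_{*}M_0,
\end{equation*}
restricted to $C_+([0,1])\setminus\{0\}$, where $c_1,c_2>0$ absorb the constants of proportionality. Then \eqref{Levy_meas} holds by construction, and $M^r(\{0\})=0$ is inherited from the analogous property of $M_0$.

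The main obstacle I anticipate is the bookkeeping required to check that the maps $\phi_r^i$ actually take values in $C_+([0,1])$, i.e.\ that continuity is preserved at the gluing point $\tau=r$ after reversal, rescaling, and extension by zero on the complementary interval. This reduces to the support property that $M_0$-almost every path vanishes at the appropriate endpoint, which one reads off from the excursion-theoretic construction of $M_0$ in \cite{pitman1982decomposition}. Should this direct route require extra care, an alternative is to invoke the general Lévy--Khintchine theorem for convolution semigroups on the cone $C_+([0,1])$ to secure the mere existence of $M^r$, and then to recover the explicit form \eqref{Levy_meas} on the separating class of exponential functionals $\xi\mapsto\exp(-\langle\theta,\xi\rangle)$ by the computation sketched above.
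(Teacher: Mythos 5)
Your proposal follows essentially the same route as the paper's proof: factorize the Laplace transform of $Q^{0}[\,\cdot\,|X_{r}=x]$ through the map $S_{r}$ into two squared Bessel bridge factors $Q^{0}_{x/r,0}$ and $Q^{0}_{x/(1-r),0}$, apply \eqref{levy_khintchine_bridges} with $\delta=0$ to each, and take $M^{r}$ to be the weighted sum of pushforwards of $M_{0}$ under the two reversal/scaling/zero-extension maps (the paper computes your constants $c_{1},c_{2}$ explicitly as $1/r$ and $1/(1-r)$). Your flagged concern about continuity at the gluing point is reasonable but is resolved exactly as you suggest, by the support properties of $M_{0}$ from the Pitman--Yor construction, a point the paper passes over silently.
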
 

\begin{proof}

Let $x \geq 0$ and $\theta : [0,1] \to \mathbb{R}_{+}$ bounded and Borel.
Since $Q^{\delta} \left[ \ \cdot \ | X_{r} = x \right]$ is the image of $Q^{\delta}_{x,0} \otimes Q^{\delta}_{x,0}$ under the map $S_{r}$ defined by \eqref{map_sr}, we have
\begin{align*}
&Q^{0} [\exp (- \langle \theta, X \rangle | X_{r} = x] = \\
&Q^{0}_{\frac{x}{r}, 0} \left[\exp \left(- \int_{0}^{1} \underline{\theta} (1-v) \ X_{v} \ dv\right) \right] \ Q^{0}_{\frac{x}{1-r}, 0} \left[\exp \left(- \int_{0}^{1} \overline{\theta} (v) \ X_{v} \ dv\right) \right],
\end{align*}
where 
\[ \underline{\theta} (v) := r^{2} \ \theta (r v), \quad 0 \leq v \leq 1, \]
and
\[ \overline{\theta} (v) := (1-r)^{2} \ \theta \left( r +v(1-r) \right), \quad 0 \leq v \leq 1. \]
Therefore, by \eqref{levy_khintchine_bridges}, we obtain
\begin{align*}
Q^{0} [\exp (- \langle \theta, X \rangle | X_{r} = x] =  \exp \Big[ & - \frac{x}{r} \int \left( 1 - \exp \left( - \int_{0}^{1} \underline{\theta} (1-v) X_{v} dv \right) \right) dM_{0}(X) \\
& - \frac{x}{1-r} \int \left( 1 - \exp \left( - \int_{0}^{1} \overline{\theta} (v) X_{v} du \right) \right) dM_{0}(X) \Big] .
\end{align*}
Upon performing the changes of variable $u := r(1-v)$ in the first integral, and $u:= r + v(1-r)$ in the second one, this yields
\begin{align*}
Q^{0} [\exp (- \langle \theta, X \rangle | X_{r} = x] &= \exp \Bigr[ - \frac{x}{r} \int \left( 1 - \exp \left( - \int_{0}^{r} \theta(u) \ r X_{\frac{r-u}{r}} du \right) \right) dM_{0}(X)  \\
&- \frac{x}{1-r} \int \left( 1 - \exp \left( - \int_{r}^{1} \theta (u) \ (1-r) X_{\frac{u-r}{1-r}} du \right) \right) dM_{0}(X) \Bigr] .
\end{align*}
Therefore, denoting by $M^{r}_{1}$ the image of $M_{0}$ under the map
\begin{align*}
\begin{cases}
C_{+}([0,1]) &\to C_{+}([0,1]) \\
X &\mapsto \left(r X_{\frac{r-u}{r}} \mathbf{1}_{[0,r]}(u) \right)_{0 \leq u \leq 1},
\end{cases}
\end{align*}
and by $M^{r}_{2}$ the image of $M_{0}$ under the map
\begin{align*}
\begin{cases}
C_{+}([0,1]) &\to C_{+}([0,1]) \\
X &\mapsto \left((1-r) X_{\frac{u-r}{1-r}} \mathbf{1}_{[r,1]}(u)\right) ,
\end{cases}
\end{align*}
and setting $M^{r} := \frac{1}{r}M^{r}_{1} + \frac{1}{1-r}M^{r}_{2}$, we deduce that $M^{r}(\{ 0 \}) =  0$, and that \eqref{Levy_meas} holds.
\end{proof}

The above Propositions will be very important for us in proving Taylor estimates for the laws of pinned Bessel bridges $P^{\delta} \left[ \ \cdot \ | X_{r} = b \right]$, for $r \in (0,1)$ and $\delta, b \geq 0$. We recall that $P^{\delta} \left[ \ \cdot \ | X_{r} = b \right]$ is the image of $Q^{\delta} \left[ \ \cdot \ | X_{r} = b^{2} \right]$ under the square root map \eqref{sqrt_map}. 

\section{Density of $\mathscr{S}$ in a large space of functionals on $L^{2}(0,1)$}
\label{section_density}

In this section we prove that a large class of functionals $\Phi : L^{2}(0,1) \to \mathbb{R}$ can be approximated by elements of $\mathscr{S}$. We do not need convergence in a very strong sense: point-wise convergence with some uniform dominations on the functionals and their differentials will suffice for our purpose. More precisely, we introduce the following definition:

\begin{df}
\label{pdi_conv}
Let $p \in \{1,2\}$, and let $\Phi_{n} \ (n \geq 1)$ and $\Phi$ be functionals on $L^{2}(0,1)$ which are differentiable at each element of $C_{+}([0,1])$, along any direction in $C^{2}_{c}(0,1)$. 
We say that the convergence assumption $(A_p)$ is satisfied if the following conditions hold:
\begin{enumerate}[label=(\roman*)]
\item \label{fst_condition} for all $X \in C_{+}([0,1])$, we have the convergence 
\[ \Phi_{n}(X) \underset{n \to \infty}{\longrightarrow} \Phi(X), \]
together with the domination 
\[ \forall n \geq 1, \quad |\Phi_{n}(X)| \leq \|\Phi\|_{\infty},\]   
\item \label{snd_condition} for all $h \in C^{2}_{c}(0,1)$, and all $X \in C_{+}([0,1])$, we have the convergence 
\[ \partial_{h} \Phi_{n} (X) \underset{n \to \infty}{\longrightarrow} \partial_{h} \Phi(X), \]
together with the domination
\[ \forall n \geq 1, \quad |\partial_{h} \Phi_{n} (X)| \leq C \|h\|_{\infty} (1+\|X\|), \]
where $C>0$ is some contant,  
\item \label{thd_condition} there exists $K>0$ such that, for all $X, Y \in  C_{+}([0,1])$ and $n \geq 1$, we have
\[ |\Phi_n(X) -  \Phi_n(Y)| \leq K \|X^{2} - Y^{2}\|_{1}^{1/p}.\]  
\end{enumerate}
\end{df}

\begin{prop}
\label{density_l1}
Let $\Phi \in \mathcal{S}C^{1}_{b}\left( L^{1}(0,1) \right)$. Then there exists a family $(\Phi^{d}_{n,k})_{d,n,k \geq 1}$ of elements of $\mathscr{S}$ such that
\begin{equation}
\label{conv_approx_sequences}
\underset{d \to \infty}{\lim} \ \underset{n \to \infty}{\lim} \ \underset{k \to \infty}{\lim} \ \Phi^{d}_{n,k} = \Phi, 
\end{equation}
where all the convergences hold in the sense of $(A_1)$.
\end{prop}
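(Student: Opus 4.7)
The plan is a three-level nested approximation corresponding to the three indices, with limits taken in the order $k \to \infty$, then $n \to \infty$, then $d \to \infty$. Use the representation $\Phi(X) = \Psi(X^{2})$ with $\Psi \in C^{1}_{b}(L^{1}(0,1))$. For $d \geq 1$, let $I^{d}_{i} := ((i-1)2^{-d}, i\,2^{-d}]$, $\theta^{d}_{i} := 2^{d}\mathbf{1}_{I^{d}_{i}}$ (bounded, Borel and nonnegative), and let $\pi_{d}(Y) := \sum_{i=1}^{2^{d}}\langle \theta^{d}_{i}, Y\rangle \mathbf{1}_{I^{d}_{i}}$ denote the dyadic conditional expectation on $L^{1}(0,1)$. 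For the outer limit in $d$ I would set $\Phi^{d}(X) := \Psi(\pi_{d}(X^{2})) = G_{d}(\langle \theta^{d}_{1}, X^{2}\rangle, \ldots, \langle \theta^{d}_{2^{d}}, X^{2}\rangle)$, where $G_{d}(z) := \Psi(\sum_{i} z_{i}\mathbf{1}_{I^{d}_{i}}) \in C^{1}_{b}(\mathbb{R}^{2^{d}}_{+})$. Martingale convergence gives $\pi_{d}(Y) \to Y$ in $L^{1}$ for every $Y \in L^{1}$; combined with $\|\pi_{d}\|_{L^{1}\to L^{1}} = 1$ and the continuity of $\Psi$ and $D\Psi$, this yields $\Phi^{d} \to \Phi$ in the sense of $(A_{1})$: bound (i) comes from $\|\Psi\|_{\infty} = \|\Phi\|_{\infty}$; (ii) from the chain rule $\partial_{h}\Phi^{d}(X) = D\Psi(\pi_{d}(X^{2}))[2\pi_{d}(Xh)]$ and Cauchy--Schwarz $\|Xh\|_{1} \leq \|X\|\|h\|_{\infty}$; (iii) from the Lipschitz continuity of $\Psi$ on $L^{1}$.

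For the middle limit in $n$ I would smoothly truncate $G_{d}$, setting $G^{d}_{n}(z) := G_{d}(z)\prod_{i=1}^{2^{d}}\eta(z_{i}/n)$, with $\eta \in C^{\infty}(\mathbb{R}_{+})$ satisfying $\eta \equiv 1$ on $[0,1]$ and $\eta \equiv 0$ outside $[0, 2]$. Then $\|G^{d}_{n}\|_{\infty} \leq \|G_{d}\|_{\infty}$, the Lipschitz constant of $G^{d}_{n}$ is uniformly bounded in $n$ (since $\|\eta'(\cdot/n)\|_{\infty} = n^{-1}\|\eta'\|_{\infty}$), and $G^{d}_{n}$ extends continuously to the compactification $[0,\infty]^{2^{d}}$ by zero at infinity. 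Both $G^{d}_{n} \to G_{d}$ and $\nabla G^{d}_{n} \to \nabla G_{d}$ pointwise, which yields $\Phi^{d}_{n}(X) := G^{d}_{n}(\langle \theta^{d}_{1}, X^{2}\rangle, \ldots, \langle \theta^{d}_{2^{d}}, X^{2}\rangle) \to \Phi^{d}(X)$ in the sense of $(A_{1})$. For the inner limit in $k$, the algebra $\mathcal{A}_{d}$ of finite linear combinations of $z \mapsto \exp(-\langle \lambda, z\rangle)$, $\lambda \in [0,\infty)^{2^{d}}$, extends continuously to $[0,\infty]^{2^{d}}$ and there contains the constants, is closed under products, and separates points; Stone--Weierstrass then produces $G^{d}_{n,k} \in \mathcal{A}_{d}$ with $G^{d}_{n,k} \to G^{d}_{n}$ uniformly on $[0,\infty]^{2^{d}}$ as $k \to \infty$. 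Setting $\Phi^{d}_{n,k}(X) := G^{d}_{n,k}(\langle \theta^{d}_{1}, X^{2}\rangle, \ldots, \langle \theta^{d}_{2^{d}}, X^{2}\rangle)$, each monomial term becomes $c\exp(-\langle \sum_{i}\lambda_{i}\theta^{d}_{i}, X^{2}\rangle)$ with $\sum_{i}\lambda_{i}\theta^{d}_{i}$ bounded, Borel and nonnegative, so $\Phi^{d}_{n,k} \in \mathscr{S}$.

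The main obstacle is verifying condition (ii) of $(A_{1})$ at the inner $k$-stage, since plain sup-norm Stone--Weierstrass does not automatically give convergence of directional derivatives. The fix is a $C^{1}$-version of Stone--Weierstrass on the compact $[0,\infty]^{2^{d}}$, which is available because $\mathcal{A}_{d}$ is itself stable under partial differentiation; one must then verify that $\partial_{h}\Phi^{d}_{n,k}(X) = 2\sum_{i}(\partial_{i}G^{d}_{n,k})(\langle \theta^{d}_{1}, X^{2}\rangle, \ldots, \langle \theta^{d}_{2^{d}}, X^{2}\rangle)\,\langle \theta^{d}_{i}, Xh\rangle$ admits a $k$-uniform domination of the form $C_{d,n}\|h\|_{\infty}(1+\|X\|)$. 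The related condition (iii) reduces to controlling $\sum_{j}|c_{j,n,k}|\,\|\vartheta_{j,n,k}\|_{\infty}$ for the exponential sum representing $\Phi^{d}_{n,k}$, which necessitates a careful choice of the Stone--Weierstrass approximation (for instance via convolution-type mollifications already of exponential type). Tracking these bounds uniformly across the three levels is the main technical difficulty and is the reason the proof requires a three-index family $(\Phi^{d}_{n,k})$ rather than a two-index one.
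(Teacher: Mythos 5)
Your three-level architecture is exactly the paper's: discretize $X^2$ by a piecewise-constant projection (the paper uses $d$ equal intervals and $\zeta^d_i=\sqrt{d}\,\mathbf{1}_{[(i-1)/d,i/d)}$, you use $2^d$ dyadic ones -- immaterial), then smoothly truncate so that the resulting finite-dimensional function is $C^1$ with compact support, then approximate that function by linear combinations of exponentials $x\mapsto e^{-\lambda\cdot x}$, $\lambda\in\mathbb{R}_+^d$, so that the composed functionals land in $\mathscr{S}$. Your verifications of $(A_1)$ at the $d$- and $n$-levels are sound and match the paper's.

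The genuine gap is the inner $k$-step, and you have in effect flagged it yourself. Conditions \textit{(ii)} and \textit{(iii)} of $(A_1)$ require, uniformly in $k$, convergence and bounds on the \emph{gradients} of the exponential-sum approximants, and plain Stone--Weierstrass on the compactification $[0,\infty]^{2^d}$ gives only sup-norm approximation. Your proposed fix -- ``a $C^1$-version of Stone--Weierstrass \ldots available because $\mathcal{A}_d$ is itself stable under partial differentiation'' -- is not a theorem: closure of an algebra under differentiation does not imply density in the $C^1$ topology, and no precise statement or proof is supplied; likewise the coefficient-sum control you mention for \textit{(iii)} is left as ``a careful choice'' to be made. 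This is precisely where the paper does its work: in Lemma \ref{pseudo_Weierstrass} (proved in the Appendix) the change of variables $y_i=e^{-x_i}$ turns the problem into polynomial approximation of a $C^1$ function on $[0,1]^{d}$, and the Bernstein operators $P_kf$ are shown, via the identity ${B^{k}_m}'=k(B^{k-1}_{m-1}-B^{k-1}_m)$ and a discrete summation by parts, to converge pointwise together with their gradients while satisfying $\|P_kf\|_\infty\le\|f\|_\infty$ and $\|\partial_i P_kf\|_\infty\le\|\partial_i f\|_\infty$; transporting back gives exponential sums $h_k$ with $|h_k|\le\|h\|_\infty$ and $\|\partial_i h_k\|_\infty\le e^{n}\|\partial_i h\|_\infty$, which is exactly the uniform-in-$k$ domination needed for \textit{(ii)} and \textit{(iii)}. (The same explicit construction is what later yields the second-order Lipschitz bounds of Lemma \ref{pseudo_Weierstrass_bis} used for Proposition \ref{density_l1_lip}.) Until you either prove a $C^1$ approximation statement for the exponential algebra with $k$-uniform gradient bounds, or substitute an explicit construction such as the Bernstein one, the proof is incomplete at its decisive step.
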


\begin{rk}
We stress that, in the above statement, the domination properties associated with assumption $(A_1)$ (see Definition \ref{pdi_conv}) are uniform only on one index, the other indices being fixed. For instance, for all $d,n \geq  1$, there exists $C(d,n) > 0$ such that
\[\forall k \geq 1, \quad |\partial_{h} \Phi^{d}_{n,k}(X) | \leq C(d,n) \|h\|_{\infty}  \|X\|, \]
but we do not claim that the constants $C(d,n)$ are bounded uniformly in $d,n \geq 1$. However, such bounds will be sufficient for our purposes; indeed, the only reason we need them is in order to show that each term in the IbPF converges when we take the successive limits $k \to \infty$, $n \to \infty$ and $d \to \infty$.  The domination properties stated above will precisely allow us to do that by applying the dominated convergence theorem three times, successively.
\end{rk}

\begin{proof}
We will proceed in three steps, by constructing sequences $(\Phi^{d})_{d,\geq 1}$, $(\Phi^{d}_{n})_{d,n\geq 1}$ and $(\Phi^{d}_{n,k})_{d,n,k \geq 1}$ of functionals on $L^{2}(0,1)$ such that $\Phi^{d}_{n,k} \in \mathscr{S}$ for all $d,n,k \geq 1$, 
with the following convergences in the sense of $(A_{1})$:
\[ \Phi^{d}_{n,k} \underset{k \to \infty}{\longrightarrow} \Phi^{d}_{n} \underset{n \to \infty}{\longrightarrow}  \Phi^{d} \underset{d \to \infty}{\longrightarrow}  \Phi. \]
We start by constructing $(\Phi^{d})_{d \geq 1}$. Let $\Psi \in C^{1}_{b}\left( L^{1}(0,1) \right)$ such that $\Phi(X) = \Psi(X^{2})$ for all $X \in L^{2}(0,1)$. Then, for any $d \geq 1$, we define $(\zeta^{d}_{i})_{1 \leq i \leq d}$ to be the orthonormal family in $L^{2}(0,1)$ given by
\begin{equation}
\label{zeta_d_i}
 \zeta^{d}_{i} := \sqrt{d} \ \mathbf{1}_{[\frac{i-1}{d}, \frac{i}{d})}, \quad i = 1, \ldots, d, 
\end{equation}
and we define $\Phi^{d}$ by
\[ \Phi^{d} (X) := \Psi \left( \sum_{i=1}^{d} \langle \zeta^{d}_{i}, X^{2} \rangle \ \zeta^{d}_{i}  \right), \quad X \in L^{2}(0,1). \]
We check that $\Phi^{d}$ converges to $\Phi$ in the sense of $(A_1)$ as $d \to \infty$. We first remark that, for all $X \in C([0,1])$, we have
\[ \sum_{i=1}^{d} \langle \zeta^{d}_{i}, X^{2} \rangle \ \zeta^{d}_{i} \underset{d \to \infty}{\longrightarrow} X^{2} \]
uniformly on $(0,1)$, hence in particular in $L^{1}(0,1)$. Since $\Psi : L^{1}(0,1) \to \mathbb{R}$ is continuous, this implies that 
\[ \Phi^{d}(X)  \underset{d \to \infty}{\longrightarrow} \Phi(X). \]
Moreover, we have the domination
\[ \forall X \in L^{2}(0,1), \quad |\Phi^{d}(X)| \leq \| \Phi \|_{\infty}, \]
as requested by condition \ref{fst_condition} in Definition \ref{pdi_conv}. Furthermore, for all $h \in C^{2}_{c}(0,1)$ and $X \in C([0,1])$, we have
\begin{equation*}
 \partial_{h} \Phi^{d}(X) = 2  D \Psi \left( \sum_{i=1}^{d} \langle \zeta^{d}_{i}, X^{2} \rangle \ \zeta^{d}_{i}  \right) \left( \sum_{i=1}^{d} \langle \zeta^{d}_{i} X, h \rangle \zeta^{d}_{i} \right). 
\end{equation*}
Now, since $\Psi$ is $C^{1}$ on $L^{1}(0,1)$, we have
\[ D \Psi \left( \sum_{i=1}^{d} \langle \zeta^{d}_{i}, X^{2} \rangle \ \zeta^{d}_{i}\right) \underset{d \to \infty}{\longrightarrow} D \Psi(X^{2}) \quad \text{in} \ L^{1}(0,1)', \]
while, at the same time, we also have 
\[ \sum_{i=1}^{d} \langle \zeta^{d}_{i} X, h \rangle \ \zeta^{d}_{i} \underset{d \to \infty}{\longrightarrow} h \ X, \]
 uniformly in $(0,1)$, hence in $L^{1}(0,1)$. Therefore
\[   D \Psi \left( \sum_{i=1}^{d} \langle \zeta^{d}_{i}, X^{2} \rangle \ \zeta^{d}_{i}  \right) \left( \sum_{i=1}^{d} \langle \zeta^{d}_{i} X, h \rangle \zeta^{d}_{i} \right) \underset{d \to \infty}{\longrightarrow}  2 D \Psi(X^{2})(hX),  \]
i.e. 
\[ \partial_{h} \Phi^{d}(X)  \underset{d \to \infty}{\longrightarrow}  \partial_{h} \Phi(X).  \]
Moreover, for all $d \geq 0$, we have
\begin{equation}
\label{bound_der_phi_d}
 | \partial_{h} \Phi^{d}(X) | \leq 2 ||| D \Psi |||_{\infty} \| h \|_{\infty} \| X \|, 
\end{equation} 
which provides the requested domination property for $\left(\partial_{h} \Phi^{d} \right)_{d \geq 1}$. Thus condition \ref{snd_condition} is fulfilled as well. Finally, for all $X, Y \in C_{+}([0,1])$, we have
\begin{align*}
\left| \Phi^{d}(X) -  \Phi^{d}(Y) \right| &= \left| \Psi(X^{2}) - \Psi(X^{2}) \right| \\
&\leq ||| D \Psi |||_{\infty} \|X^{2} - Y^{2} \|_{1},
\end{align*}
as requested by condition \ref{thd_condition}. Therefore, $\Phi^{d} \underset{d \to \infty}{\longrightarrow} \Phi$ in the sense of $(A_1)$.  

We now fix $d \geq 1$ and, for all integer $n \geq 1$, we construct $\Phi^{d}_{n}$. The latter will be a truncated version of $\Phi^{d}$ obtained as follows. Let $\chi: \mathbb{R} \to \mathbb{R}$ be a smooth function with values in $[0,1]$, such that $\chi =1$ on $(- \infty, -1]$ and $\chi = 0$ on $[0, +\infty)$. Set $\chi_{n}( \cdot) := \chi( \cdot -n)$ and let
\[ \Phi^{d}_{n}(X) = \Phi^{d}(X) \prod_{i=1}^{d} \chi_{n} \left( \langle \zeta_{i}, X^{2} \rangle \right), \quad X \in L^{2}(0,1). \]
We check that $\Phi^{d}_{n}$ converges to $\Phi^{d}$ in the sense of $(A_1)$ as $n \to \infty$. Since $\chi_{n} \underset{n \to \infty}{\longrightarrow} 1$ pointwise, we have 
\[ \Phi^{d}_{n}(X) \underset{n \to \infty}{\longrightarrow} \Phi^{d}(X) \]
for all $X \in L^{2}(0,1)$. Moreover, we have $|\Phi^{d}_{n}(X)| \leq \| \Phi^{d} \|_{\infty}$ for all $n \geq 1$ and $X \in L^{2}(0,1)$.  Hence, the convergence and domination assumptions in condition \ref{fst_condition} do indeed hold. Turning to condition \ref{snd_condition}, we remark that, for all $n\in \mathbb{N}$, $h \in C^{2}_{c}(0,1)$ and $X \in L^{2}(0,1)$, we have
\begin{align}
\label{expression_der_phi_n}
\partial_{h} \Phi^{d}_{n}(X) &= \partial_{h} \Phi^{d}(X) \prod_{i=1}^{d} \chi_{n} \left( \langle \zeta_{i}, X^{2} \rangle \right) \\
\nonumber &+ \Phi^{d}(X) \sum_{i=1}^{d} \chi_{n}' \left( \langle \zeta_{i}, X^{2} \rangle \right) \prod_{j \neq i} \chi_{n} \left( \langle \zeta_{j}, X^{2} \rangle \right)  \langle 2\zeta_{i} X, h \rangle
\end{align}
Since $\chi_{n} \underset{n \to \infty}{\longrightarrow} 1$ and $\chi_{n}' \underset{n \to \infty}{\longrightarrow} 0$ pointwise, it holds that $\partial_{h} \Phi^{d}_{n}(X) \underset{n \to \infty}{\longrightarrow} \partial^{d}_{h} \Phi(X)$. Moreover, by equality \eqref{expression_der_phi_n} and the Leibniz formula, and recalling \eqref{bound_der_phi_d}, we have
\[ | \partial_{h} \Phi^{d}_{n} (X) | \leq 2 \| \Psi \|_{C^{1}} \left( 1 + d \|\chi'\|_{\infty}\right) \| h \|_{\infty}  \| X \|, \]
which provides the requested domination property. Finally, for all $n \geq 1$ and $X, Y \in C_{+}([0,1])$, we have
\begin{align*}
| \Phi^{d}_{n}(X) - \Phi^{d}_{n}(Y)| &\leq \| \Psi \|_{C^{1}}  \left(1 + \sum_{i=1}^{d} \|\chi'\|_{\infty} \|\zeta^{d}_{i}\|_{\infty} \right) \|X^{2} - Y^{2}\|_{1} \\
&\leq  \| \Psi \|_{C^{1}}  (1 + d^{3/2} \|\chi'\|_{\infty} ) \, \|X^{2} - Y^{2}\|_{1},
\end{align*}
so condition \ref{thd_condition} is fulfilled as well. Hence $\Phi^{d}_{n}$ converges in the sense of $(A_1)$ to $\Phi^{d}$ as $n \to \infty$.

Finally, we fix $d, n \geq 1$, and construct the sequence $(\Phi^{d}_{n,k})_{k \geq 1}$. Note that $\Phi^{d}_{n}$ is of the form
\[ \Phi^{d}_{n} (X) = h \left( \langle \zeta_{1}, X^{2} \rangle, \ldots, \langle \zeta_{d}, X^{2} \rangle \right), \quad X \in L^{2}(0,1), \]
where $h: \mathbb{R}_{+}^{d} \to \mathbb{R}$ is the function given by 
\begin{equation}
\label{def_gn}
 h(x) := \Psi \left( \sum_{i=1}^{d} x_{i} \ \zeta^{d}_{i} \right) \prod_{i=1}^{d} \chi (x_{i} -n), \quad x \in \mathbb{R}_{+}^{d}. 
\end{equation}
Remark that $h$ is $C^{1}$ with bounded support in $[0,n]^{d}$. We now make use of an approximation result on $\mathbb{R}_+^d$. Denoting by $\cdot$ the standard inner product on $\mathbb{R}^{d}$, let $\mathscr{E}$ be the linear span of the functions
\[e^{- \lambda \cdot} : 
\begin{cases} 
\mathbb{R}_+^d \to \mathbb{R} \\
x \mapsto e^{- \lambda \cdot x} \end{cases}, \]
for $\lambda \in \mathbb{R}_+^d$. We state the following approximation result, the proof of which is postponed to the Appendix (Section \ref{sect_appendix}):

\begin{lm}
\label{pseudo_Weierstrass}
Given $n \geq 1$ a fixed integer, let  $h : \mathbb{R}_{+}^{d} \to \mathbb{R}$ be a $C^{1}$ function supported  in  $[0,n]^{d}$. Then there exists a sequence of functions $h_k \in \mathscr{E}$, $k \geq 1$, such that:
\begin{itemize}
\item for all $x \in \mathbb{R}_{+}^{d}, \quad h_{k}(x) \underset{k \to \infty}{\longrightarrow} h(x)$ and $ \nabla h_{k}(x) \underset{k \to \infty}{\longrightarrow} \nabla h(x)$,
\item for all $k \geq 1$ and all $x \in \mathbb{R}_{+}^{d}$, we have 
\[ |h_{k}(x)| \leq |h(x)|,  \]
and
\[ \forall i = 1 \ldots d, \qquad  |\partial_{i} h_{k}(x)| \leq C(n) \ | \partial_{i} h(x)|, \]
where $C(n)$ is a positive constant depending only on $n$.
\end{itemize}
\end{lm}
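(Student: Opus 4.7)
The plan is to identify $\mathscr{E}$ with polynomials on the cube via the change of variables $y_i := e^{-x_i}$, which bijects $\mathbb{R}_+^d$ onto $(0,1]^d$ and sends each generator $x \mapsto e^{-\lambda\cdot x}$ (with $\lambda\in\mathbb{R}_+^d$) to the monomial $y \mapsto y^\lambda$. Setting $\tilde h(y) := h(-\log y_1,\ldots,-\log y_d)$ for $y\in(0,1]^d$, the support assumption on $h$ gives $\tilde h(y)=0$ whenever $y_i<e^{-n}$ for some $i$. A direct computation of $\partial_{y_i}\tilde h = -y_i^{-1}\partial_{x_i}h$ shows that $\tilde h$ extends by zero to a $C^1$ function on the closed cube $[0,1]^d$, since the factor $y_i^{-1}$ is harmless on the set where $\partial_{x_i}h$ already vanishes.

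Next, I would define $h_k$ as the pullback of the $k$-th multivariate Bernstein polynomial of $\tilde h$:
\[
h_k(x) := \sum_{j\in\{0,\ldots,k\}^d}\tilde h\!\left(\tfrac{j}{k}\right)\prod_{i=1}^{d}\binom{k}{j_i}\,e^{-j_i x_i}(1-e^{-x_i})^{k-j_i}.
\]
Expanding each $(1-e^{-x_i})^{k-j_i}$ by the binomial theorem makes $h_k$ a finite linear combination of exponentials $e^{-\mu\cdot x}$ with $\mu\in\mathbb{Z}_{\geq 0}^d$, so $h_k\in\mathscr{E}$. Classical multivariate Bernstein theory gives $B_k\tilde h\to \tilde h$ uniformly on $[0,1]^d$, and $\partial_{y_i}B_k\tilde h\to\partial_{y_i}\tilde h$ uniformly as well since $\tilde h\in C^1([0,1]^d)$. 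Pulling these back via $\partial_{x_i}=-e^{-x_i}\partial_{y_i}$ yields the required pointwise convergences $h_k(x)\to h(x)$ and $\nabla h_k(x)\to\nabla h(x)$ for every $x\in\mathbb{R}_+^d$.

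The heart of the argument is the domination. Using the probabilistic representation $B_k\tilde h(y) = \mathbb{E}[\tilde h(S/k)]$, where $S$ has independent coordinates $S_i\sim\mathrm{Binomial}(k,y_i)$, one obtains $\|h_k\|_\infty\le\|h\|_\infty$. The Bernstein derivative identity, combined with the mean value theorem, yields $\|\partial_{y_i}B_k\tilde h\|_\infty\le\|\partial_{y_i}\tilde h\|_\infty$; translating back to $x$ using $|\partial_{y_i}\tilde h(y)|\le e^n|\partial_{x_i}h(-\log y)|$ on the support of $\tilde h$ (where $y_i\ge e^{-n}$) gives the derivative bound with constant $C(n)=e^n$. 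The main obstacle is that the bounds as literally stated in the lemma are pointwise: since elements of $\mathscr{E}$ are real-analytic on $\mathbb{R}_+^d$, any nonzero $h_k\in\mathscr{E}$ fails the strictly pointwise bound $|h_k(x)|\le|h(x)|$ on the open set $\mathbb{R}_+^d\setminus[0,n]^d$ where $h$ vanishes, so a literal reading would force $h_k\equiv 0$ and rule out convergence to any nonzero $h$. The intended reading is therefore the uniform one just proved, which is exactly what the subsequent use of the lemma in Proposition \ref{density_l1} needs: the bounds there serve only to produce $k$-uniform integrable majorants for dominated convergence as $k\to\infty$ with $d,n$ fixed.
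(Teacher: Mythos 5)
Your proposal is correct and follows essentially the same route as the paper's proof: the change of variables $y_i = e^{-x_i}$, multivariate Bernstein polynomials of the transformed function, the probabilistic (binomial) representation for the zeroth-order bound, and the Bernstein derivative identity for the first-order bound, with the same constant $C(n)=e^{n}$. Your observation about the literal pointwise dominations is well taken and consistent with the paper, whose own proof only establishes the sup-norm bounds $\|h_k\|_{\infty}\leq\|h\|_{\infty}$ and $\|\partial_i h_k\|_{\infty}\leq e^{n}\|\partial_i h\|_{\infty}$, which is all that the application in Proposition \ref{density_l1} requires.
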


For all $n \geq 1$ fixed, let now $(h_k)_{k \geq 1}$ be a sequence of elements of $\mathscr{E}$ approximating the function $h$ defined in \eqref{def_gn} as in Lemma \ref{pseudo_Weierstrass},  and set
\[ \Phi^{d}_{n,k}(X) := h \left( \langle \zeta_{1}, X^{2} \rangle, \ldots, \langle \zeta_{d}, X^{2} \rangle \right), \quad X \in L^{2}(0,1). \]
Then for all $k \geq 1$, the functional $\Phi^{d}_{n,k}$ lies in $\mathscr{S}$. Moreover, using the properties of $(h_k)_{k \geq 1}$ one easily deduces that the sequence $\left(\Phi^{d}_{n,k}\right)_{k \geq 1}$ 
converges in the sense of $(A_1)$ to $\Phi^{d}_{n}$. This yields the claim. 
\end{proof}

In the proof of the IbPF for $\delta \in (0,1)$, we shall need a slight refinement of the above proposition stating that, if $\Phi \in \mathcal{S}C^{1,1}_{b}\left( L^{1}(0,1) \right)$, the approximating sequences converge in a stronger sense. More precisely, we introduce the following notion of convergence:

\begin{df}
\label{pdi_lip_conv}
Let $\Phi_{n} \ (n \geq 1)$ and $\Phi$ be functionals on $L^{2}(0,1)$ which are differentiable at each element of $C_{+}([0,1])$, along any direction in $C^{2}_{c}(0,1)$. 
We say that the sequence $\left( \Phi_{n} \right)_{n \geq 1}$ and $\Phi$ satisfy the convergence assumption $(A_1^+)$ if both of the following conditions hold:
\begin{itemize}
\item $\left( \Phi_{n} \right)_{n \geq 1}$ converges to $\Phi$ in the sense of $(A_1)$,
\item for all $ n \geq 1$ and $ X, Z, Z \in L^{1}_{+}(0,1)$, we have
\begin{equation}
\label{condition_pdi_lip}
\begin{split}
&\left|\Phi_{n}(\sqrt{X+Z+Z'}) - \Phi_{n}(\sqrt{X+Z}) - \Phi_{n}(\sqrt{X+Z'}) + \Phi_{n}(\sqrt{X})\right| \\
&\leq L \|Z\|_{1} \|Z'\|_{1},
\end{split}
\end{equation}
where $L>0$ is some constant.
\end{itemize}
\end{df} 

\begin{prop}
\label{density_l1_lip}
Let $\Phi \in \mathcal{S} C^{1,1}_{b}(L^{1}(0,1))$. Then the approximating sequences of functionals given by Proposition \ref{density_l1}  are such that the convergences \eqref{conv_approx_sequences} actually hold in the sense of $(A_1^+)$.
\end{prop}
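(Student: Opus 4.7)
The plan is to check, for each of the three approximating sequences produced in Proposition~\ref{density_l1}, only the new uniform double-increment bound \eqref{condition_pdi_lip} appearing in $(A_1^+)$, since the $(A_1)$ part is already provided by that proposition. The structural observation I would exploit throughout is that each approximant built there is of the form $\Phi^{\bullet}(X) = \Psi^{\bullet}(X^{2})$ for some functional $\Psi^{\bullet}$ on $L^{1}(0,1)$. After the substitution $W = Y^{2}$, condition~\eqref{condition_pdi_lip} becomes
\[ |\Psi^{\bullet}(X+Z+Z') - \Psi^{\bullet}(X+Z) - \Psi^{\bullet}(X+Z') + \Psi^{\bullet}(X)| \leq L \|Z\|_{1} \|Z'\|_{1}, \]
for all $X, Z, Z' \in L^{1}_{+}(0,1)$; by Remark~\ref{bound_double_order_incr}, this is implied by $\Psi^{\bullet} \in C^{1,1}_{b}(L^{1}(0,1))$ with Lipschitz constant of its differential bounded by $L$. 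Thus the entire proof reduces to producing uniform $C^{1,1}_{b}$ control on each $\Psi^{\bullet}$ in each of the three indices.

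At the $d$-level, one has $\Psi^{d} = \Psi \circ P_{d}$, where
\[ P_{d} W := \sum_{i=1}^{d} \langle \zeta^{d}_{i}, W \rangle \, \zeta^{d}_{i} \]
is the conditional-expectation-type projection onto piecewise constants on the dyadic partition. A short computation shows $\|P_{d}\|_{L^{1} \to L^{1}} \leq 1$, so $\Psi^{d}$ inherits the $C^{1,1}_{b}$ bound of $\Psi$ with the same constant, uniformly in $d$. At the $n$-level (fixing $d$), one has $\Psi^{d}_{n}(W) = \Psi^{d}(W) \prod_{i=1}^{d} \chi_{n}(\langle \zeta^{d}_{i}, W\rangle)$; each factor is $C^{1,1}_{b}(L^{1}(0,1))$ with norms bounded uniformly in $n$ by $\|\chi\|_{\infty}$, $\|\chi'\|_{\infty}$, $\|\chi''\|_{\infty}$ and $\|\zeta^{d}_{i}\|_{\infty} = \sqrt{d}$, the crucial point being that $\chi_{n} = \chi(\cdot - n)$ is a mere translate of $\chi$. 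The Leibniz rule applied to a bounded product of finitely many such factors then yields $\Psi^{d}_{n} \in C^{1,1}_{b}(L^{1}(0,1))$ with a constant depending on $d$ but uniform in $n$, as required.

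The third, and main, obstacle is the $k$-level $\Psi^{d}_{n,k}(W) = h_{k}(L_{d} W)$ with $L_{d} W := (\langle \zeta^{d}_{i}, W\rangle)_{1 \leq i \leq d}$ and $h_{k} \in \mathscr{E}$. A direct computation, together with $\|L_{d}\|_{L^{1} \to \mathbb{R}^{d}} \leq \sqrt{d}$, gives
\[ \mathrm{Lip}(D\Psi^{d}_{n,k}) \leq d \, \mathrm{Lip}(\nabla h_{k}), \]
so the crux is a uniform-in-$k$ bound on $\mathrm{Lip}(\nabla h_{k})$. Now the function $h$ defined in \eqref{def_gn} is itself $C^{1,1}$ on $\mathbb{R}^{d}$, with Lipschitz gradient constant finite (and depending on $d$ and $n$), since $\Psi \in C^{1,1}_{b}(L^{1}(0,1))$ and $\chi$ is smooth. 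What one needs is therefore a strengthening of Lemma~\ref{pseudo_Weierstrass} asserting that, in addition to the properties already listed there, the approximants $h_{k} \in \mathscr{E}$ can be chosen to satisfy $\sup_{k} \mathrm{Lip}(\nabla h_{k}) < \infty$.

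The hard part will be to supply this strengthening. I would do so by revisiting the proof of Lemma~\ref{pseudo_Weierstrass} in the Appendix: mollifying $h$ by a smooth compactly-supported kernel at a small fixed scale preserves $C^{1,1}$ bounds (with constants controlled by those of $h$), and the subsequent approximation of the mollified function by linear combinations of exponentials can be performed with $C^{2}$-uniform convergence on the compact set $[0,n]^{d}$. Both steps respect the Lipschitz constant of $\nabla h_{k}$, yielding exponential approximants that additionally satisfy the uniform bound above. Feeding this back into the displayed inequality produces the uniform double-increment estimate for $\Psi^{d}_{n,k}$; combining the three levels establishes $(A_{1}^{+})$-convergence of all three sequences, completing the proof.
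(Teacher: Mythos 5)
Your reduction of the problem is the right one, and your treatment of the first two levels coincides with the paper's: at the $d$-level the projection $Z \mapsto \sum_i \langle \zeta^d_i, Z\rangle\,\zeta^d_i$ is a contraction on $L^1(0,1)$, so $\Psi^d$ satisfies \eqref{lipschitz_differential} with the same constant, and at the $n$-level the Leibniz rule together with the fact that $\chi_n$ is a translate of $\chi$ gives a constant depending on $d$ but not on $n$; Remark \ref{bound_double_order_incr} then converts these into \eqref{condition_pdi_lip}. You also correctly identify the crux at the $k$-level: a Lipschitz bound on the gradients of the exponential approximants $h_k$, uniform in $k$.

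However, at exactly that point your argument has a genuine gap. The statement to be proved asserts that the \emph{same} sequences furnished by Proposition \ref{density_l1} converge in the sense of $(A_1^+)$, and the paper closes this by proving (Lemma \ref{pseudo_Weierstrass_bis}, via second-order difference estimates on the Bernstein-polynomial construction underlying Lemma \ref{pseudo_Weierstrass}) that the \emph{already constructed} $h_k$ automatically inherit the Lipschitz-gradient bound $|\partial_i h_k(x)-\partial_i h_k(y)| \le C'(n)(L'+\|\partial_i h\|_\infty)\sum_j|x_j-y_j|$ on all of $\mathbb{R}_+^d$, uniformly in $k$. You instead propose to build \emph{new} approximants by mollifying $h$ and approximating in $C^2$ on the compact set $[0,n]^d$, and you leave this "strengthening" as a sketch. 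Besides the mismatch with the statement (you would be proving existence of some $(A_1^+)$-convergent family, not the claimed property of the given one), the sketch does not obviously close: condition \eqref{condition_pdi_lip} and the dominations in $(A_1)$ are quantified over all of $L^1_+(0,1)$, i.e.\ over all of $\mathbb{R}_+^d$ after reduction, whereas elements of $\mathscr{E}$ do not vanish outside $[0,n]^d$ and a $C^2$-uniform approximation on $[0,n]^d$ controls nothing there — so the uniform-in-$k$, global-in-$x$ bounds on $h_k$, $\nabla h_k$ and the second-order increments of $h_k$ (and hence the $(A_1)$ dominations you would also need to re-verify for the new family) are not established. Moreover, a mollification at a "small fixed scale" does not converge pointwise to $h$; you would need the scale to vanish along a diagonal in $k$, with constants tracked through both steps. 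In short, the decisive estimate — the analogue of Lemma \ref{pseudo_Weierstrass_bis} — is asserted rather than proved, and it is precisely the hard content of this proposition.
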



\begin{proof}
Since we already know that the convergence \eqref{conv_approx_sequences} holds in the sense of $(A_1)$, there only remains to prove that these approximating sequences further satisfy condition \eqref{condition_pdi_lip}. Let $\Psi$ as in \eqref{phi_psi_lip}. Since $\Psi \in C^{1,1}_{b}(B)$, there exists $L>0$ satisfying \eqref{lipschitz_differential}. Moreover, since the map 
\begin{align*}
\begin{cases}
L^{1}(0,1) &\to \quad L^{1}(0,1) \\
Z &\mapsto \quad  \sum_{i=1}^{d} \langle  \zeta^{d}_{i}, Z \rangle \zeta^{d}_{i}
\end{cases}
\end{align*}
is Lipschitz continous (with Lipschitz constant $1$), we deduce that the functional $\Psi^{d}: Z \mapsto \Psi \left( \sum_{i=1}^{d} \langle  \zeta^{d}_{i}, Z \rangle \zeta^{d}_{i} \right) $ also satisfies \eqref{lipschitz_differential}. As a consequence, by Remark \ref{bound_double_order_incr}, for all $X, Z, Z' \in L^{1}_{+}(0,1)$ and $d \geq 1$, we have
\begin{align*}
& \left| \Phi^{d}(\sqrt{X+Z+Z'}) - \Phi^{d}(\sqrt{X+Z}) - \Phi^{d}(\sqrt{X+Z'}) + \Phi^{d}(\sqrt{X}) \right| \\
&= \left| \Psi^{d}(X+Z+Z') - \Psi^{d}(X+Z) - \Psi^{d}(X+Z') + \Psi^{d}(X) \right| \\
&\leq L \left\| Z \right\|_{1} \left\| Z'\right\|_{1}.
\end{align*}
Hence, the sequence $(\Phi^{d})_{d \geq 1}$ satisfies the condition \eqref{condition_pdi_lip}, so it converges in the sense of $(A_1^+)$ to $\Phi$. Moreover, for all $d \geq 1$, $\Psi^{d} \in C^{1,1}_{b}(L^1(0,1))$ and $\chi'$ is globally Lipschitz (it is smooth and compactly supported). Hence, for all $n \geq 1$, the functional $\Psi^{d}_{n}$ given by
\[ \Psi^{d}_{n}(Z) :=  \Psi^{d}(Z) \prod_{i=1}^{d} \chi_{n} \left( \langle \zeta_{i}, Z \rangle \right), \quad Z \in L^{1}(0,1), \]
satisfies  \eqref{lipschitz_differential}, with some Lipschitz constant $L'$ depending only on $\Psi$, $\chi$ and $d$. Therefore for all $n \geq 1$ and $X,Z,Z' \in C_{+}([0,1])$, we have
\begin{align*}
&\left| \Phi^{d}_{n}(\sqrt{X+Z+Z'}) - \Phi^{d}_{n}(\sqrt{X+Z}) - \Phi^{d}_{n}(\sqrt{X+Z'}) + \Phi^{d}_{n}(\sqrt{X}) \right| \\
&= \left| \Psi^{d}_{n}(X+Z+Z') - \Psi^{d}_{n}(X+Z) - \Psi^{d}_{n}(X+Z') + \Psi^{d}_{n}(X) \right| \\
&\leq L' \left\| Z \right\|_{1} \left\| Z'\right\|_{1},
 \end{align*}
so $(\Phi^{d}_{n})_{n \geq 1}$ satisfies the condition \eqref{condition_pdi_lip}, and hence converges in the sense of $(A_1^+)$ to $\Phi$. Finally, there remains to prove that, for all fixed $d,n \geq 1$, the sequence $\left(\Phi^{d}_{n,k}\right)_{k \geq 1}$ satisfies the condition \eqref{condition_pdi_lip}. To do so, note that, for all $n \geq 1$, the function $h$ defined by \eqref{def_gn} satisfies
\[  \forall i = 1, \ldots, d, \ \forall x, y \in \mathbb{R}_{+}^{d}, \quad |\partial_{i}h (x) - \partial_{i}h(y) | \leq L' \sum_{j=1}^{d} |x_{j} - y_{j}|, \]
where $L'>0$ is as above. By Lemma \ref{pseudo_Weierstrass_bis} below, this implies that, for all $n \geq 1$, the sequence $(h_k)_{k \geq 1}$ of functions approximating $h$ as in Lemmas \ref{pseudo_Weierstrass} satisfies the bound
\begin{align*} 
\forall x, z, z' \in \mathbb{R}_{+}^{d}, \quad
&|h_k(x+z+z') - h_k(x+z) - h_k(x+z') + h_k(x)| \\
&\leq C'(n)  \left( L' + \| \partial_{i} h \|_{\infty}  \right) \, \sum_{j=1}^{d} |z_{j}|  \sum_{j=1}^{d} |z'_{j}| . 
\end{align*}
From that inequality we deduce that, for all $X, Z, Z' \in L^{1}_{+}(0,1)$, we have
\begin{align*}
&\left| \Phi^{d}_{n,k}(\sqrt{X+Z+Z'}) - \Phi^{d}_{n,k}(\sqrt{X+Z}) - \Phi^{d}_{n,k}(\sqrt{X+Z'}) + \Phi^{d}_{n,k}(\sqrt{X}) \right| \\
&\leq C'(n) \left( L' + \| \partial_{i} h \|_{\infty}  \right) \|Z\|_{1} \|Z'\|_{1}, 
 \end{align*}
which proves that the sequence $\left(\Phi^{d}_{n,k}\right)_{k \geq 1}$ satisfies the condition \eqref{condition_pdi_lip}, and hence converges to $\Phi^{d}_{n}$ in the sense of $(A_1^+)$. This yields the claim
\end{proof}

In the above proof we used the following Lemma, the proof of which is postponed to the Appendix (Section \ref{sect_appendix}):
\begin{lm}
\label{pseudo_Weierstrass_bis}
Given $n \geq 1$ a fixed integer, let  $h : \mathbb{R}_+^d \to \mathbb{R}$ be a $C^{1}$ function supported  in  $[0,n]^{d}$, and satisfying furthermore:
\begin{equation}
\label{lip_deriv_g}
    \forall i = 1, \ldots, d, \ \forall x, y \in \mathbb{R}_{+}^{d}, \quad |\partial_{i}h (x) - \partial_{i}h(y) | \leq L' \sum_{j=1}^{d} |x_{j} - y_{j}|
\end{equation}  
for some constant $L'>0$. Then the sequence of functions $\left( h_{k} \right)_{k \geq 0}$ given by Lemma \ref{pseudo_Weierstrass} further satisfies the following: for all $k \geq 1$ and $i = 1, \ldots, d$, we have
\[ \forall x, y  \in \mathbb{R}_{+}^{d}, \quad | \partial_{i} h _{k} (x) - \partial_{i} h _{k} (y) | \leq C'(n)  \left( L' + \| \partial_{i} h \|_{\infty}  \right) \sum_{j=1}^{d} |x_{j} - y_{j}| \]
where $C'(n)>0$ is a constant depending only on $n$.
\end{lm}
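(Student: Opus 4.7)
The plan is to revisit the construction of $(h_k)_{k \geq 1}$ that was used to prove Lemma \ref{pseudo_Weierstrass}, and verify that under the additional hypothesis \eqref{lip_deriv_g}, the partial derivatives $\partial_i h_k$ inherit a Lipschitz estimate with the claimed form of constant.

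The natural construction of such an approximating sequence proceeds via the change of variables $u_j = e^{-x_j}$, which maps $[0,n]^d$ onto the compact box $[e^{-n},1]^d$ and identifies the exponential linear span $\mathscr{E}$ with the space of polynomials in $u$. Setting $\tilde{h}(u) := h(-\log u_1, \ldots, -\log u_d)$, one obtains a $C^1$ function on $[e^{-n},1]^d$ which, since $h$ vanishes near the boundary of $[0,n]^d$, vanishes near the boundary of this new box. One then approximates $\tilde{h}$ by multivariate Bernstein-type polynomials $P_k(u)$, and sets $h_k(x) := P_k(e^{-x_1}, \ldots, e^{-x_d}) \in \mathscr{E}$. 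The bounds of Lemma \ref{pseudo_Weierstrass} follow from standard uniform estimates on Bernstein polynomials and their derivatives.

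To prove the present lemma, I would first translate the hypothesis \eqref{lip_deriv_g} into a Lipschitz estimate for $\partial_{u_i} \tilde{h}$ on $[e^{-n},1]^d$. Using the chain rule $\partial_{u_i} \tilde{h}(u) = -\tfrac{1}{u_i} \, \partial_i h(-\log u)$ and the fact that $u_i \in [e^{-n},1]$ is bounded away from zero, one readily checks that $\partial_{u_i} \tilde{h}$ is Lipschitz in $u$ with constant bounded by $\tilde{C}(n)(L' + \|\partial_i h\|_\infty)$; the two contributions come respectively from the Lipschitz hypothesis on $\partial_i h$ and from the variation of the prefactor $1/u_i$. The key structural fact is that Bernstein-type approximation preserves such Lipschitz estimates: the partial derivative $\partial_{u_i} P_k$ is itself expressible as a Bernstein-like average of first-order finite differences of $\tilde{h}$ in the $i$-th direction, so its Lipschitz constant is controlled by that of $\partial_{u_i} \tilde{h}$ (up to a factor depending only on $n$). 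Finally, returning to $h_k$ via $\partial_i h_k(x) = -e^{-x_i}\,(\partial_{u_i} P_k)(e^{-x_1},\ldots,e^{-x_d})$ and using that $x \mapsto e^{-x_i}$ is bounded and Lipschitz on $[0,n]$, one obtains the desired estimate for $\partial_i h_k$.

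The main obstacle will be the bookkeeping of Lipschitz constants through this change of variable. The chain rule produces the extra factor $-e^{-x_i}$ in $\partial_i h_k$, whose variation in $x$ couples with the sup-norm $\|\partial_{u_i} P_k\|_\infty$, itself controlled by $C(n)\|\partial_i h\|_\infty$ from Lemma \ref{pseudo_Weierstrass}; this is precisely what generates the term $\|\partial_i h\|_\infty$ in the advertised constant, beyond the term coming from $L'$. Once the Lipschitz estimate for $\partial_{u_i} \tilde{h}$ is transferred to $\partial_{u_i} P_k$ via the classical Bernstein identities for partial derivatives, combining both contributions yields a constant of the form $C'(n)(L' + \|\partial_i h\|_\infty)$ as announced.
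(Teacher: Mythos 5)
Your plan follows essentially the same route as the paper's proof: the same exponential change of variables, a Lipschitz estimate for $\partial_i f$ on the cube with constant of the form $C(n)\left( L' + \| \partial_i h \|_{\infty} \right)$, the observation that the Bernstein construction of Lemma \ref{pseudo_Weierstrass} transmits this estimate to $\partial_i P_k f$ (the paper phrases this as a bound on the second derivatives $\partial^{2}_{i,j} P_k f$ via second-order finite differences), and the chain rule back to $h_k$, where the variation of $e^{-x_i}$ coupled with $\| \partial_i P_k f \|_{\infty}$ produces the $\| \partial_i h \|_{\infty}$ contribution, exactly as in the paper. The one point to watch when writing it up is that the Lipschitz estimate for $\partial_i f$ is needed on all of $[0,1]^d$ (the Bernstein grid of Lemma \ref{pseudo_Weierstrass}), not only on $[e^{-n},1]^d$; the paper settles this crossing of the support boundary by a short case analysis, and it goes through since $\partial_i f$ vanishes continuously once some coordinate is $\leq e^{-n}$.
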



Propositions \ref{density_l1} and \ref{density_l1_lip} enable to approximate, by elements of $\mathscr{S}$, any functional $\Phi$ of the form  
\[ \Phi(X) = \Psi(X^{2}), \quad X \in L^{2}(0,1), \]
where $\Psi \in C^{1}_{b}\left(L^{1}(0,1)\right)$. Such an assumption may appear rather restrictive, since it in particular forces $D\Psi(0)$ to vanish. However, it turns out that for general functionals $\Phi \in C^{1}_{b}(L^{2}(0,1))$, we can also obtain such an approximation result, but in a weaker sense. 

\begin{prop}
\label{density_l2}
Let $\Phi \in C^{1}_{b}\left(L^{2}(0,1)\right)$. Then there exists a family $(\Phi^{m,d}_{n,k})_{m,d,n,k \geq 1}$ of elements of $\mathscr{S}$ such that
\begin{equation}
\label{conv_approx_sequences_l2}
\underset{m \to \infty}{\lim} \ \underset{d \to \infty}{\lim} \ \underset{n \to \infty}{\lim} \ \underset{k \to \infty}{\lim} \ \Phi^{m,d}_{n,k} = \Phi 
\end{equation}
where the first three limits are in the sense of $(A_1)$, while the last limit (in $m$) is in the sense of $(A_2)$.
\end{prop}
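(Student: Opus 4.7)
My strategy is to reduce the statement to Proposition \ref{density_l1}. First, I construct a sequence $(\Phi^m)_{m \geq 1} \subset \mathcal{S}C^1_b(L^1(0,1))$ with $\Phi^m \to \Phi$ in the sense of $(A_2)$; then, for each fixed $m$, Proposition \ref{density_l1} yields a family $(\Phi^{m,d}_{n,k})_{d,n,k \geq 1} \subset \mathscr{S}$ such that $\lim_d \lim_n \lim_k \Phi^{m,d}_{n,k} = \Phi^m$ in the sense of $(A_1)$. Concatenating the four limits then gives \eqref{conv_approx_sequences_l2}.

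The central task is the construction of $\Phi^m$. Set $I_i := [(i-1)/m, i/m)$ and $\chi_i := \mathbf{1}_{I_i}$. Fix $f_m \in C^\infty(\mathbb{R}, \mathbb{R}_+)$ with $f_m$, $f_m'$, $f_m''$ bounded and satisfying $f_m(y) = \sqrt{y + 1/m}$ for $y \geq 0$. Define
\[\Psi^m(Y) := \Phi\Bigl(\sum_{i=1}^{m} f_m(m\langle \chi_i, Y\rangle)\,\chi_i\Bigr),\qquad Y \in L^1(0,1),\]
and $\Phi^m(X) := \Psi^m(X^2)$. Since $\Psi^m$ factors through the bounded linear map $L^1(0,1) \ni Y \mapsto (\langle \chi_i, Y\rangle)_{i=1}^m \in \mathbb{R}^m$, the smooth map $\mathbb{R}^m \to L^2(0,1)$ with bounded derivatives given by $(y_i) \mapsto \sum_i f_m(m y_i)\chi_i$, and $\Phi \in C^1_b(L^2(0,1))$, one verifies that $\Psi^m \in C^1_b(L^1(0,1))$; hence $\Phi^m \in \mathcal{S}C^1_b(L^1(0,1))$.

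It remains to check $(A_2)$. For condition \ref{fst_condition}, given $X \in C_+([0,1])$, uniform continuity of $X^2$ implies $m\langle \chi_i, X^2\rangle \to X(r)^2$ uniformly for $r \in I_i$, whence $\sum_i f_m(m\langle \chi_i, X^2\rangle)\chi_i \to X$ uniformly on $[0,1]$ and $\Phi^m(X) \to \Phi(X)$ by continuity of $\Phi$. For condition \ref{thd_condition}, the elementary inequality $(\sqrt{a+1/m} - \sqrt{b+1/m})^2 \leq |a-b|$ for $a, b \geq 0$, combined with $\|\sum_i \alpha_i \chi_i\|_2^2 = \frac{1}{m}\sum_i \alpha_i^2$ and $\sum_i |\langle \chi_i, X^2 - Y^2\rangle| \leq \|X^2 - Y^2\|_1$, yields
\[|\Phi^m(X) - \Phi^m(Y)| \leq |||D\Phi|||_\infty \,\|X^2 - Y^2\|_1^{1/2},\]
with constant uniform in $m$. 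For condition \ref{snd_condition}, one writes
\[\partial_h \Phi^m(X) = D\Phi\Bigl(\sum_i f_m(m\langle \chi_i, X^2\rangle)\chi_i\Bigr)\Bigl(\sum_i \frac{m\int_{I_i} hX}{\sqrt{m\langle \chi_i, X^2\rangle + 1/m}}\chi_i\Bigr);\]
a Cauchy-Schwarz bound on $\int_{I_i} hX$ shows the inner argument is bounded in $L^2$ by $\|h\|_\infty$ uniformly, giving the required domination, and pointwise convergence to $\partial_h \Phi(X)$ for $X \in C_+([0,1])$ follows from dominated convergence and continuity of $D\Phi$.

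The main difficulty is to reconcile two conflicting requirements: on one hand, $\Psi^m$ must be $C^1_b$ on $L^1(0,1)$, which forces its Lipschitz constant to grow with $m$, since the map $Y \mapsto \sqrt{Y}$ is not Lipschitz from $L^1_+$ to $L^2_+$; on the other hand, the H\"older estimate \ref{thd_condition} must hold with a constant uniform in $m$. The key identity $(\sqrt{a} - \sqrt{b})^2 \leq |a-b|$ together with the exact matching between the scaling $m$ and $\|\chi_i\|_2^2 = 1/m$ makes this possible, and reflects the intrinsic H\"older-$1/2$ regularity of the square root, which is precisely encoded in the exponent $1/p = 1/2$ of condition \ref{thd_condition} in $(A_2)$.
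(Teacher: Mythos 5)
Your proof is correct, but it takes a genuinely different route from the paper at the key step. The paper also starts from the smoothing idea $\Phi^{m}(X)=\Phi(\sqrt{X^{2}+1/m})$, for which conditions \ref{fst_condition} and \ref{thd_condition} of $(A_2)$ are immediate via $\|u-v\|_2\leq\|u^2-v^2\|_1^{1/2}$; however, the corresponding $\Psi^{m}(Z)=\Phi(\sqrt{|Z+1/m|})$ is \emph{not} $C^{1}$ on $L^{1}(0,1)$ (only Lipschitz with directional derivatives bounded by $\frac{m}{2}\|\Phi\|_{C^1}\|h\|$), so the paper cannot invoke Proposition \ref{density_l1} as a black box and instead re-runs its proof, checking that these weaker properties suffice. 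You circumvent this by building a finite-dimensional, smoothed discretization $\Psi^{m}(Y)=\Phi\bigl(\sum_i f_m(m\langle\chi_i,Y\rangle)\chi_i\bigr)$ which genuinely lies in $C^{1}_{b}(L^{1}(0,1))$ (composition of a bounded linear map, a smooth map $\mathbb{R}^m\to L^2$ with bounded differential, and $\Phi$), so that Proposition \ref{density_l1} applies verbatim to each $\Phi^{m}$ and only the $(A_2)$-convergence $\Phi^m\to\Phi$ needs to be checked by hand; your verifications of the uniform H\"older bound (the cancellation between the scaling $m$ and $\|\chi_i\|_2^2=1/m$) and of the uniform domination of $\partial_h\Phi^m$ via Cauchy--Schwarz are correct. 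This is a more modular argument, at the price of a slightly heavier computation for condition \ref{snd_condition}. Two small remarks: the requirement that $f_m$ itself be bounded is impossible (since $f_m(y)=\sqrt{y+1/m}$ on $\mathbb{R}_+$ is unbounded) but also unnecessary — only boundedness of $f_m'$ on $\mathbb{R}$ is used, so simply drop that condition; and the pointwise convergence $\partial_h\Phi^m(X)\to\partial_h\Phi(X)$, in your construction as in the paper's, is only guaranteed when the zero set of $X$ has Lebesgue measure zero (the discretized quotient converges to $h$ only off $\{X=0\}$), a caveat the paper's own "easy to check" glosses over and which is harmless in the application since it holds $P^{\delta}$-almost surely.
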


\begin{proof}

As in the proof of Proposition \ref{density_l1}, we will proceed in several steps, by constructing sequences $(\Phi^{m})_{m \geq 1}$, $(\Phi^{m,d})_{d,\geq 1}$, $(\Phi^{m,d}_{n})_{m, d,n\geq 1}$ and $(\Phi^{m,d}_{n,k})_{m,d,n,k \geq 1}$ of functionals on $L^{2}(0,1)$ such that $\Phi^{m,d}_{n,k} \in \mathscr{S}$ for all $m,d,n,k \geq 1$, with 
\[ \Phi^{m,d}_{n,k} \underset{k \to \infty}{\longrightarrow} \Phi^{m,d}_{n} \underset{n \to \infty}{\longrightarrow}  \Phi^{m,d} \underset{d \to \infty}{\longrightarrow}  \Phi^{m} \underset{m \to \infty}{\longrightarrow} \Phi,\]
where the first three convergences hold in the sense of $(A_1)$, and the last one holds in the sense of $(A_2)$.

We start by constructing $(\Phi^{m})_{m\geq 1}$. For all $m \geq 1$, let $\Phi^{m}$ be the functional given by
\[ \Phi^{m} (X) := \Phi \left( \sqrt{ X^{2} + \frac{1}{m}} \right), \quad X \in L^{2}(0,1). \]
We show that the sequence $\left(\Phi^{m}\right)_{m \geq 1}$ converges in the sense of $(A_2)$ to $\Phi$. 
It is easy to check that conditions \ref{fst_condition} and \ref{snd_condition} of Definition \ref{pdi_conv} are satisfied, so we focus on 
the proof of condition $(iii)$. To this end, we use the fact that, for all $u,v \in L^2_+(0,1)$,
\begin{equation}
\label{eqn.l2.l1}
\|u-v\|_2 \leq \|u^2 - v^2\|_1^{1/2}, 
\end{equation}
which follows from the inequality $(a-b)^2 \leq |a^2-b^2|$ valid for all $a,b \geq 0$. Hence, for all $X,Y \in C_{+}([0,1])$,
$$\left\|\sqrt{X^2+\frac{1}{m}} - \sqrt{Y^2 + \frac 1m} \right\|_2 \leq \|X^2-Y^2\|_1^{1/2}, $$
whence 
\[  \left| \Phi^{m}(X) -  \Phi^{m}(Y) \right| \leq \| \nabla \Phi \|_{\infty} \left\| X^{2}-Y^{2} \right\|_{1}^{1/2},\]
which provides the domination condition \ref{thd_condition} with $p=2$.
Hence, the sequence $\left(\Phi^{m}\right)$ converges in the sense of $(A_2)$ to $\Phi$. 

For $m \geq 1$ fixed, the sequences $(\Phi^{m, d})_{m, d\geq 1}$, $(\Phi^{m, d}_{n})_{m, d,n\geq 1}$ and $(\Phi^{m, d}_{n,k})_{m,d,n,k \geq 1}$ can then be constructed from $\Phi^{m}$ in exactly the same way as $(\Phi^{d})_{d,\geq 1}$, $(\Phi^{d}_{n})_{d,n\geq 1}$ and $(\Phi^{d}_{n,k})_{d,n,k \geq 1}$ were constructed from $\Phi$ in the proof of Proposition \ref{density_l1}. The key remark is that, for all $X \in C_{+}(0,1)$, we have
\[ \Phi^{m}(X) = \Psi^{m}(X^{2}), \]
where 
\begin{align*}
\Psi^{m}: \begin{cases}
L^{1}(0,1) &\to L^{1}(0,1) \\
Z &\mapsto \Phi \left(\sqrt{|Z+\frac{1}{m}|} \right).
\end{cases}
\end{align*}
Although $\Psi^{m}$ is not $C^{1}$, it is Lipschitz continuous on $L^{1}(0,1)$, and, at each $X \in C_{+}([0,1])$, has directional derivatives in all directions $h \in C([0,1])$ satisfying the bound
\[ |\partial_{h} \Psi^{m}(Z)| \leq  \frac{m}{2} \| \Phi \|_{C^{1}} \| h \|. \]
Therefore, exactly as in the proof of Proposition \ref{density_l1}, we can show that the sequences $(\Phi^{m, d})_{m, d\geq 1}$, $(\Phi^{m, d}_{n})_{m, d,n\geq 1}$ and $(\Phi^{m, d}_{n,k})_{m,d,n,k \geq 1}$ will satisfy all the requested convergence and domination properties. We thus get the claim.

\end{proof}

 \section{Taylor estimates for the laws of pinned Bessel bridges}    
 
 \label{section_estimates}

In the previous section, we have shown that rather general functionals can be approximated by sequences of functionals in $\mathscr{S}$, for which we readily know that the IbPF derived in \cite{EladAltman2019} hold. Hence, to generalize the IbPF to the former functionals, we need to show that the terms appearing in our formulae converge when we take such limits. Thus in the case $\delta \in (1,3) $, as suggested by \eqref{onetothree}, we need to control, for all $r \in (0,1)$ and $b>0$, the quantity
\[ \mathcal{T}^{0}_{b} E^{\delta} [\Phi(X) | X_{r} = \cdot], \]
while in the case  $\delta \in (0,1)$, we need to control
\[\mathcal{T}^{2}_{b} E^{\delta} [\Phi(X) | X_{r} = \cdot], \]
for all sufficiently regular functional $\Phi$ on $L^{2}(0,1)$. Obtaining such estimates is the goal of the present section. 

\subsection{Taylor estimates at order $0$}

As recalled in the introduction, for all $\Phi \in \mathscr{S}$, $\delta \in (1,3)$ and $h \in C^{2}_{c}(0,1)$, the integral
\begin{equation}
\label{renormalized_integral}
 \int_{0}^{1} dr h(r) \int_{0}^{\infty} db \ p^{\delta}_{r}(b) \frac{1}{b^{3}} \mathcal{T}^{0}_{b} E^{\delta} [\Phi(X) | X_{r} = \cdot]
\end{equation}
is convergent. This is due to the fact that, for all $r \in (0,1)$, the function $ b \to E^{\delta} [\Phi(X) | X_{r} = \cdot]$ is smooth, with vanishing derivative at $0$. Hence, as $b \to 0$,
\[ \mathcal{T}^{0}_{b} E^{\delta} [\Phi(X) | X_{r} = \cdot] = O (b^{2}), \]
(see Remarks 4.2 and 4.3 in \cite{EladAltman2019}). By contrast, for an arbirary $\Phi \in C^{1}_{b}(L^{2}(0,1))$, it is not clear a priori whether such an estimate holds. Actually it is not even clear whether the integral \eqref{renormalized_integral} converges. However, it turns out that we can obtain a domination on the quantity $\mathcal{T}^{0}_{b} E^{\delta} [\Phi(X) | X_{r} = \cdot]$, even for an arbitrary $\Phi \in C^{1}_{b}(L^{2}(0,1))$. This bound is a little worse than in $b^{2}$, but it is still sufficient to make the double integral \eqref{renormalized_integral} converge.

\begin{prop}
\label{bound_lip_func}
There exists a universal constant $M>0$ such that the following holds: for all $\delta \geq 0$, all $L>0$ and all bounded and Borel measurable functional $\Phi: L^{2}(0,1) \to \mathbb{R}$ satisfying
\begin{equation}
\label{lipschitz_assumption}
\forall X, Y \in L^{2}_{+}(0,1), \quad | \Phi(X) - \Phi(Y) | \leq L  \left( \| X^{2} -Y^{2} \|_{1} \right)^{1/2}
\end{equation}
we have
\begin{equation}
\label{bound_cond_exp}
\forall r \in (0,1),  \forall b > 0, \quad |\mathcal{T}^{0}_{b} E^{\delta} [\Phi(X) | X_{r} = \cdot] | \quad \leq \quad M L \, b^{2} |\log(b)|.
\end{equation} 
In particular, for all such $\Phi $, and all $\delta > 1$, the function 
\[(r,b) \mapsto  \mathcal{T}^{0}_{b} E^{\delta} [\Phi(X) | X_{r} = \cdot] \]
 is integrable with respect to the measure $\frac{p^{\delta}_{r}(b)}{b^{3}} \, dr \, db$ on $(0,1) \times \mathbb{R}^{*}_{+}$.
\end{prop}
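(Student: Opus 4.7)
The plan is to combine the additivity of pinned squared Bessel bridge laws (Proposition~\ref{additivity_pinned}) with a L\'evy-It\^o truncation argument tailored to the $\tfrac12$-H\"older control \eqref{lipschitz_assumption}. Writing $Q^{\delta}[\,\cdot \mid X_r = b^2\,] = Q^{\delta}[\,\cdot \mid X_r = 0\,] \ast Q^{0}[\,\cdot \mid X_r = b^2\,]$, I introduce independent $Y_0 \sim Q^{\delta}[\,\cdot \mid X_r = 0\,]$ and $Z_x \sim Q^{0}[\,\cdot \mid X_r = x\,]$, and the functional $G(Z) := E[\Phi(\sqrt{Y_0 + Z})]$, so that
\[
\mathcal{T}^{0}_b E^{\delta}[\Phi(X) \mid X_r = \cdot] = E[G(Z_{b^2})] - G(0).
\]
Applying \eqref{lipschitz_assumption} with $X = \sqrt{Y_0 + Z}$, $Y = \sqrt{Y_0 + Z'}$ gives $|G(Z) - G(Z')| \leq L \|Z - Z'\|_1^{1/2}$, so the task reduces to proving $|E[G(Z_x)] - G(0)| \leq M L x |\log x|$ for small $x = b^2$.

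By Proposition~\ref{levy_meas_pinned_sqred_bridges}, $Z_x = \sum_{U \in \Pi_x} U$ where $\Pi_x$ is a Poisson point measure on $C_+([0,1])$ with intensity $x M^{r}$. For a truncation level $\epsilon > 0$, I decompose $Z_x = Z_x^{\leq \epsilon} + Z_x^{> \epsilon}$ according to whether an atom satisfies $\|U\|_1 \leq \epsilon$ or $> \epsilon$; inserting $G(Z_x^{>\epsilon})$ yields
\[
|E[G(Z_x)] - G(0)| \leq L\, E\bigl[\|Z_x^{\leq \epsilon}\|_1^{1/2}\bigr] + L\, E\bigl[\|Z_x^{> \epsilon}\|_1^{1/2}\bigr].
\]
Jensen's inequality bounds the small-jump contribution by $L\sqrt{x \int_0^{\epsilon} y\, \nu_r(dy)}$, where $\nu_r$ denotes the image of $M^{r}$ under $\|\cdot\|_1$. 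The subadditivity $\sqrt{\sum a_i} \leq \sum \sqrt{a_i}$ combined with Campbell's formula for the Poisson process gives the large-jump estimate $E[\|Z_x^{>\epsilon}\|_1^{1/2}] \leq x \int_{\epsilon}^{\infty} y^{1/2}\, \nu_r(dy)$.

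The crucial quantitative input is the short-time behavior of $\nu_r$. From the factorization appearing in the proof of Proposition~\ref{levy_meas_pinned_sqred_bridges}, its Laplace exponent admits the expression $\psi_r(\lambda) = r^{-1} \phi(r^2 \lambda) + (1-r)^{-1} \phi((1-r)^2 \lambda)$, where $\phi(\mu) = \tfrac12\bigl(\sqrt{2\mu}\coth\sqrt{2\mu} - 1\bigr)$ is the Laplace exponent of $\|X\|_1$ under $Q^{0}_{1,0}$, obtained by a direct Feynman-Kac computation on the $0$-dimensional squared Bessel bridge. Since $\phi(\mu) \sim \sqrt{\mu/2}$ as $\mu \to \infty$, a standard Tauberian argument identifies $\nu_r$ with a density $\sim c\, y^{-3/2}$ near $0$, yielding $\int_0^{\epsilon} y\, \nu_r(dy) \lesssim \sqrt{\epsilon}$ and $\int_{\epsilon}^{\infty} y^{1/2}\, \nu_r(dy) \lesssim |\log \epsilon|$ as $\epsilon \to 0$, with constants that can be made uniform in $r \in (0,1)$ by exploiting the explicit algebraic form of $\phi$ and the cancellations between the two terms in $\psi_r$. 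Optimizing with the choice $\epsilon = x^2$ balances the two contributions to $O(L x |\log x|)$, which, translated to the variable $b = \sqrt{x}$, gives the announced bound $M L b^2 |\log b|$; the integrability statement for $\delta > 1$ follows immediately from this estimate together with $p^{\delta}_r(b) \sim b^{\delta-1}$ near $0$ and the Gaussian decay of $p^{\delta}_r(b)$ at infinity. The main obstacle lies precisely in this last step: establishing the $r$-uniformity of the prefactor in the $y^{-3/2}$ singularity of $\nu_r$, which is what forces the constant $M$ to be genuinely universal and not depend on $r$.
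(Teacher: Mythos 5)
Your opening reduction coincides with the paper's: by Proposition \ref{additivity_pinned} and \eqref{lipschitz_assumption} the whole problem is reduced to bounding $\mathbb{E}\bigl[\|Z_{b^{2}}\|_{1}^{1/2}\bigr]=E^{0}[\,\|X\|\,|\,X_{r}=b]$ uniformly in $r$. Where you genuinely diverge is in how this expectation is estimated. The paper uses the elementary identity $\mathbb{E}[R]=\frac{1}{2\sqrt{\pi}}\int_{0}^{\infty}\lambda^{-3/2}\bigl(1-\mathbb{E}[e^{-\lambda R^{2}}]\bigr)\,d\lambda$ together with the explicit Laplace transform \eqref{expr_cond_exp} of $\|X\|^{2}$ under $E^{0}[\,\cdot\,|X_{r}=b]$; the bound $f(u)\leq C\,(u\wedge u^{2})$ then removes the $r$-dependence in one line and leaves a one-dimensional integral to estimate. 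You instead pass to a L\'evy--It\^o picture of $Q^{0}[\,\cdot\,|X_{r}=x]$ and perform a small/large jump truncation of the L\'evy measure; your Laplace exponent $\psi_{r}$ agrees with \eqref{expr_moment_nr}, and the Jensen/Campbell estimates for the two jump regimes are correct \emph{given} the Poisson representation. This is a viable alternative, but it is heavier than what it replaces and, as written, incomplete at two points.

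First, the pathwise representation $Z_{x}=\sum_{U\in\Pi_{x}}U$ with $\Pi_{x}$ Poisson of intensity $xM^{r}$ is asserted, not justified: Proposition \ref{levy_meas_pinned_sqred_bridges} only gives the Laplace-functional identity \eqref{Levy_meas}, and the paper explicitly declines to construct the associated path-valued subordinator (see the remark following Proposition \ref{diff_prop}). The identification is standard --- build the Poisson random measure, check by Campbell's theorem that the sum of its atoms (a.s.\ convergent in $L^{1}_{+}(0,1)$ by Lemma \ref{finite_moment_nr}) has Laplace functional \eqref{Levy_meas}, and use that such functionals determine the law --- but this has to be said. Second, and more seriously, the quantitatively decisive input, namely $\int_{0}^{\epsilon}y\,\nu_{r}(dy)\lesssim\sqrt{\epsilon}$ and $\int_{\epsilon}^{\infty}y^{1/2}\,\nu_{r}(dy)\lesssim 1+|\log\epsilon|$ with constants independent of $r$, is exactly the step you concede you have not carried out (``the main obstacle lies precisely in this last step''). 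A Tauberian asymptotic of $\nu_{r}$ at $0$ for fixed $r$ does not yield such uniform inequalities. They are nevertheless true and provable from the explicit $\psi_{r}$: since $0\leq f'\leq 1$ one gets $\int_{0}^{\epsilon}y\,\nu_{r}(dy)\leq e\,\psi_{r}'(1/\epsilon)\leq e\sqrt{\epsilon/2}$, and since $f(u)\leq u$ one gets $\nu_{r}((s,\infty))\leq (1-e^{-1})^{-1}\psi_{r}(1/s)\leq C s^{-1/2}$, which combined with $\int y\,\nu_{r}(dy)=\tfrac13$ and integration in layers gives the logarithmic bound; alternatively, $M^{r}$ is the sum of two scaled copies of the fixed measure $M_{0}$, so $\nu_{r}$ inherits its small-mass behaviour from $\nu_{0}$ with explicit scaling in $r$. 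With these two gaps filled, your choice $\epsilon=b^{4}$ indeed yields $O\bigl(Lb^{2}(1+|\log b|)\bigr)$ (which, as in the paper's own computation, is the honest form of the bound and is all that the integrability claim requires). In short: a correct alternative strategy, but the step that makes $M$ universal is left unproven, whereas the paper's Laplace-transform identity delivers it directly.
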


\begin{rk}
\label{bound_for_c1_funct}
Let $\Phi \in C^{1}_{b}(L^{2}(0,1))$. Then \eqref{lipschitz_assumption} holds with $L = \|\Phi\|_{C^{1}}$. Indeed, for all $X, Y \in L^{2}_{+}(0,1)$, we have
\begin{align*}
|\Phi(X) - \Phi(Y)| &\leq \|\Phi\|_{C^{1}} \| X - Y \| \\
&\leq \|\Phi\|_{C^{1}}   \left( \| X^{2} -Y^{2} \|_{1} \right)^{1/2}
\end{align*}
where the second inequality follows from \eqref{eqn.l2.l1}.
\end{rk}

\begin{proof}
Let $\Phi : L^{2}(0,1) \to \mathbb{R}$ satisfying \eqref{lipschitz_assumption}. We first assume the bound \eqref{bound_cond_exp} to be true and check that the second statement holds. Let $\delta > 1$. Recalling \eqref{density}, we have 
\begin{align*}
& \int_{0}^{1}\int_{0}^{\infty} \frac{p^{\delta}_{r}(b)}{b^{3}} |\mathcal{T}^{0}_{b} E^{\delta} [\Phi(X) | X_{r} = \cdot]| db \ dr \\
& \leq  \int_{0}^{1}\int_{0}^{\infty} \frac{ b^{\delta -4} }{2^{\frac{\delta}{2} -1} (r(1-r))^{\delta/2} \Gamma(\frac{\delta}{2})} \exp \left(- \frac{b^{2}}{2r(1-r)} \right) M L \, b^{2} |\log(b)| \  db \, dr \\
& = \frac{M L }{2^{\frac{\delta}{2} -1} \Gamma(\frac{\delta}{2})} \int_{0}^{1} \frac{1}{\left(r(1-r)\right)^{\delta/2}} \left( \int_{0}^{\infty} b^{\delta-2} |\log(b)| \exp \left( - \frac{b^{2}}{2r(1-r)} \right) db \right) dr. 
\end{align*}
But, for all $r \in (0,1)$, performing the change of variable $a = \frac{b^{2}}{2r(1-r)}$, we obtain
\begin{align*}
&\int_{0}^{\infty} b^{\delta-2} |\log(b)| \exp \left( - \frac{b^{2}}{2r(1-r)} \right) db \\
= &\left(r(1-r)\right)^{\frac{\delta-1}{2}} 2^{\frac{\delta-3}{2}}  \int_{0}^{\infty} a^{\frac{\delta-3}{2}} e^{-a} \, |\log(\sqrt{2r(1-r)a})| \, da \\
= &\left( r(1-r)\right)^{\frac{\delta-1}{2}} 2^{\frac{\delta-5}{2}} \left( \Gamma \left( \frac{\delta-1}{2} \right) |\log(2r(1-r))| + A\right)
\end{align*}
where $A:=   \int_{0}^{\infty} a^{\frac{\delta-3}{2}} e^{-a}  |\log(a)| \ da \in (0, +\infty) $, since  $\frac{\delta-3}{2} > -1$. Therefore
\begin{align*}
& \int_{0}^{1}\int_{0}^{\infty} \frac{p^{\delta}_{r}(b)}{b^{3}} |\mathcal{T}^{0}_{b} E^{\delta} [\Phi(X) | X_{r} = \cdot]| db \ dr \\
& \leq \frac{M L }{2^{3/2} \Gamma(\frac{\delta}{2})} \left\{  \Gamma \left( \frac{\delta-1}{2} \right) \int_{0}^{1}  \frac{|\log(2r(1-r))|}{\left(r(1-r)\right)^{1/2}} dr + A \int_{0}^{1} \frac{dr}{\left(r(1-r)\right)^{1/2}} \right\} 
\end{align*}
which is finite, whence the claim. We now prove that \eqref{bound_cond_exp} indeed holds. 
By Proposition \ref{additivity_pinned}, for all $r \in (0,1)$ and $\delta, x \geq 0$, denoting by $Z_{r}(\delta,x)$ a random variable in $L^{2}(0,1)$ distributed according to $Q^{\delta}\left( \cdot |X_{r} = x \right)$, we have 
\[  Z_{r}(\delta,x) \overset{(d)}{=} Z_{r}(\delta,0) + Z_{r}(0,x) \]
where $Z_{r}(\delta,0)$ and $Z_{r}(0,x)$ are two independent random variables with laws given respectively by $Q^{\delta}\left( \cdot |X_{r} = 0 \right)$ and $Q^{0}\left( \cdot |X_{r} = x \right)$. Therefore, for all functional $\Phi : L^{2}(0,1) \to \mathbb{R}$ satisfying \eqref{lipschitz_assumption}, for all $r \in (0,1)$ and $b > 0$, we have
\begin{align*}
E^{\delta} [\Phi(X) | X_{r} = b] &= Q^{\delta} \ [\Phi(\sqrt{X}) | X_{r} = b^{2}] \\ 
 &= \mathbb{E} \left[ \Phi \left( \sqrt{ Z_{r}(\delta,b^{2})} \right) \right] \\
 &= \mathbb{E} \left[ \Phi \left( \sqrt{Z_{r}(\delta,0) + Z_{r}(0,b^{2})} \right) \right].
\end{align*}
Hence
\begin{align*}
\left| \mathcal{T}^{0}_{b} E^{\delta} [\Phi(X) | X_{r} = \cdot] \right| &= |E^{\delta} [\Phi(X) | X_{r} = b] - E^{\delta} [\Phi(X) | X_{r} = 0] | \\
&= \left| \mathbb{E}  \left[ \Phi \left( \sqrt{Z_{r}(\delta,0) + Z_{r}(0,b^{2})} \right) - \Phi \left(  \sqrt{Z_{r}(\delta,0)} \right) \right] \right| \\
& \leq \mathbb{E}  \left[\left| \Phi \left( \sqrt{Z_{r}(\delta,0) + Z_{r}(0,b^{2})} \right) - \Phi \left(  \sqrt{Z_{r}(\delta,0)} \right) \right| \right].
\end{align*} 
But, by assumption \eqref{lipschitz_assumption}, we have
\[ \left| \Phi \left( \sqrt{Z_{r}(\delta,0) + Z_{r}(0,b^{2})} \right) - \Phi \left(  \sqrt{Z_{r}(\delta,0)} \right) \right| \leq L \left( \| Z_{r}(0,b^{2}) \|_{1} \right)^{1/2}. \]


Therefore
\begin{align*}
\left| \mathcal{T}^{0}_{b} E^{\delta} [\Phi(X) | X_{r} = \cdot] \right| &\leq L \, \mathbb{E}  \left[\| Z_{r}(0,b^{2})\|_{1}^{1/2} \right]\\
&= L \, E^{0} \left[ \ \| X^{2} \|_{1}^{1/2}  |X_{r} = b\right]\\
&= L \, E^{0} [ \ \|X\| \ |X_{r} = b],
\end{align*}
so there only remains to obtain a bound on $E^{0} [ \ \|X\| \ |X_{r} = b]$. To do so, we exploit the knowledge of the quantity $E^{0} [ \exp \left( - \lambda \|X\|^{2} \right)  |X_{r} = b]$, for all $ \lambda > 0$. Indeed, by equality (3.18) in \cite{EladAltman2019} we have
\[ E^{0} [ \exp \left( - \lambda \|X\|^{2} \right)  |X_{r} = b] = \exp \left[- C(r) \frac{b^{2}}{2} \right] \] 
where
\[ C(r) := \frac{\psi(1)}{\psi(r) \hat{\psi}(r)} - \frac{1}{r(1-r)}  , \]
with $\psi, \hat{\psi}$ associated, as in (3.13) and (3.14) of \cite{EladAltman2019}, to the function $\theta : [0,1] \to \mathbb{R}_{+}$ given by 
\[ \theta(u) = \lambda, \quad u \in [0,1]. \]
One finds easily the following expressions for $\psi$ and $\hat{\psi}$:
\[ \psi(u) = \frac{1}{\sqrt{2 \lambda}} \sinh \left( \sqrt{2 \lambda } u \right), \quad \hat{\psi} = \frac{1}{\sqrt{2 \lambda}} \sinh \left( \sqrt{2 \lambda} \left( 1-u \right) \right) \]
for all $u \in [0,1]$. In particular we obtain
\begin{align*}
 \frac{\psi(1)}{\psi(r) \hat{\psi}(r)} &= \frac{ \sqrt{2 \lambda} \sinh \left( \sqrt{2 \lambda } \right)}{\sinh \left( \sqrt{2 \lambda } r \right) \sinh \left( \sqrt{2 \lambda } (1- r) \right)} \\
&= \sqrt{2\lambda} \left( \text{coth} (\sqrt{2 \lambda} r) + \text{coth} (\sqrt{2 \lambda} (1-r)) \right),
\end{align*}
where $\text{coth}(x) := \frac{\cosh (x)}{\sinh (x)}$ for all $x \neq 0$ . Therefore, we have

\begin{align*}
C(r) &=  \sqrt{2 \lambda} \left(  \text{coth} (\sqrt{2 \lambda} r) + \text{coth} (\sqrt{2 \lambda} (1-r)) \right) - \frac{1}{r(1-r)} \\
&= \frac{1}{r} f(\sqrt{2\lambda}r) + \frac{1}{1-r} f \left(\sqrt{2 \lambda}(1-r)\right) 
\end{align*}

where $f : \mathbb{R} \to \mathbb{R}$ is defined by
\begin{equation*}
f(u)=
\begin{cases}
u \ \text{coth}(u)  - 1 , \quad &\text{if} \  u \neq 0 \\
0, \quad  &\text{if} \ u = 0.
\end{cases} 
\end{equation*} 
We thus obtain the expression
\begin{equation}
\label{expr_cond_exp}
E^{0} [ \exp \left( - \lambda \|X\|^{2} \right)  |X_{r} = b]  =  \exp \left[- \frac{b^{2}}{2} \left( \frac{1}{r} f(\sqrt{2\lambda}r) + \frac{1}{1-r} f\left(\sqrt{2 \lambda} \left(1-r\right) \right) \right) \right].
\end{equation}
There now remains to deduce from \eqref{expr_cond_exp} an expression for $E^{0} [ \|X\|  |X_{r} = b]$. To do so, we use the following lemma

\begin{lm}
Let $R$ be a nonnegative real variable such that $R > 0$ a.s. 
Then:
\[ \mathbb{E} [R] = \frac{1}{2\sqrt{\pi}} \int_{0}^{\infty} \lambda^{-3/2} \left( 1 -\mathbb{E} \left( \exp \left( - \lambda R^{2} \right) \right) \right) d\lambda. \]
\end{lm}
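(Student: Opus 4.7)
The plan is to establish a pointwise integral representation of the identity map on $(0, \infty)$, namely
\[ r = \frac{1}{2\sqrt{\pi}} \int_{0}^{\infty} \lambda^{-3/2} \bigl( 1 - e^{-\lambda r^{2}} \bigr) \, d\lambda \qquad \text{for all } r > 0, \]
and then to apply Fubini--Tonelli. Since both $R$ and the integrand are nonnegative (the function $\lambda \mapsto \lambda^{-3/2}(1 - e^{-\lambda R^{2}})$ being a.s.\ nonnegative because $R > 0$ a.s.), Tonelli's theorem applies without any integrability hypothesis and yields the claim directly by taking expectations on both sides.

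To establish the pointwise identity, I would perform the change of variable $\mu = \lambda r^{2}$, which immediately reduces the right-hand side to
\[ \frac{r}{2\sqrt{\pi}} \int_{0}^{\infty} \mu^{-3/2} \bigl( 1 - e^{-\mu} \bigr) \, d\mu, \]
so it suffices to show that this last integral equals $2\sqrt{\pi}$. I would compute it by integration by parts: taking $u = 1 - e^{-\mu}$ and $dv = \mu^{-3/2} \, d\mu$, hence $v = -2\mu^{-1/2}$, the boundary term vanishes at both endpoints (at $0$ because $1 - e^{-\mu} \sim \mu$, so $\mu^{-1/2}(1-e^{-\mu}) \to 0$; at $\infty$ because $\mu^{-1/2} \to 0$), and one is left with $2 \int_{0}^{\infty} \mu^{-1/2} e^{-\mu} \, d\mu = 2\Gamma(1/2) = 2\sqrt{\pi}$.

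There is no genuine obstacle here; the only delicate point is that the integrand $\lambda^{-3/2}$ alone is not integrable at $0$, so the identity must be handled as a single integral of the difference $1 - e^{-\lambda r^{2}}$, not as a difference of two integrals. This is why I prefer the integration-by-parts route (or equivalently a careful regularisation, writing the integral as $\lim_{\varepsilon \to 0^{+}} \int_{\varepsilon}^{\infty}$) over splitting. Once the pointwise identity is established and the positivity of the integrand is noted, Tonelli's theorem concludes in one line, giving
\[ \mathbb{E}[R] = \frac{1}{2\sqrt{\pi}} \int_{0}^{\infty} \lambda^{-3/2} \bigl( 1 - \mathbb{E}[e^{-\lambda R^{2}}] \bigr) \, d\lambda. \]
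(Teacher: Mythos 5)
Your proof is correct and follows essentially the same route as the paper: a Fubini--Tonelli exchange (legitimate by nonnegativity) combined with the scaling change of variable $x = \lambda r^{2}$, valid since $R>0$ a.s. The only cosmetic difference is that you evaluate $\int_{0}^{\infty} x^{-3/2}(1-e^{-x})\,dx = 2\sqrt{\pi}$ directly by integration by parts, whereas the paper cites a lemma identifying this integral with $-\Gamma\left(-\tfrac{1}{2}\right)$.
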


\begin{proof}
By Fubini-Tonnelli, we have
\begin{align*}
 \int_{0}^{\infty} \lambda^{-3/2} \left( 1 - \mathbb{E} \left( \exp \left( - \lambda R^{2} \right) \right) \right) d\lambda &= \mathbb{E} \left[  \int_{0}^{\infty} \lambda^{-3/2} \left( 1 - \exp \left(- \lambda R^{2} \right) \right) d\lambda \right] \\
&=  \mathbb{E} [R]  \int_{0}^{\infty} x^{-3/2} ( 1 - e^{-x}) dx, 
\end{align*}
where we performed the change of variable $x := R^{2} \lambda$ to obtain the last line (this is allowed, since $R>0$ a.s.). But, by Lemma 4.8 in \cite{EladAltman2019}, the last integral equals $- \Gamma \left(-\frac{1}{2} \right) = 2 \sqrt{\pi}$. The claim follows. 
\end{proof}                  

Applying this result to the random variable $ R:= \|X\| $ under the probability measure $ E^{0} [ \cdot  |X_{r} = b] $ over $L^{2}(0,1)$, we obtain
\begin{align*}
& E^{0} [ \ \|X\| \  |X_{r} = b] = \\
& \frac{1}{2\sqrt{\pi}} \int_{0}^{\infty} \lambda^{-3/2} \left( 1 -\exp \left[- \frac{b^{2}}{2} \left( \frac{1}{r} f(\sqrt{2\lambda}r) + \frac{1}{1-r} f \left(\sqrt{2 \lambda} \left(1-r\right) \right) \right) \right]
 \right) d\lambda.
\end{align*}
Performing the change of variable $x = \sqrt{2 \lambda}$, this yields
\begin{align*}
& E^{0} [ \ \|X\| \  |X_{r} = b] = \\
& \sqrt{\frac{2}{\pi}} \int_{0}^{\infty} x^{-2} \left( 1 -\exp \left[- \frac{b^{2}}{2} \left( \frac{1}{r} f(r x) + \frac{1}{1-r} f \left( \left(1-r \right) x\right) \right) \right]
 \right) d x,
\end{align*}
so it suffices to bound the latter integral. To do so, note that $f(u) = O(u^{2})$ when $u \to 0$, whereas $f(u) = O(u)$ as $u \to +\infty$. Hence there exists a universal constant $C > 0$ such that
\[ \forall u \geq 0, \quad f(u) \leq C \, u \wedge u^{2}. \] 
Therefore, recalling that $r \in (0,1)$, we have
\begin{align*}
\frac{1}{r} f(r x) + \frac{1}{1-r} f \left( \left(1-r) x \right) \right) &\leq \frac{1}{r} C (r x) \wedge (rx)^{2} + \frac{1}{1-r} C \left( (1-r) x\right) \wedge  \left( (1-r) x \right)^{2}  \\
&\leq 2 C \, x \wedge x^{2}.
\end{align*}

Hence, we have 

\begin{align*}
& \int_{0}^{\infty} x^{-2} \left( 1 -\exp \left[- \frac{b^{2}}{2} \left( \frac{1}{r} f(r x) + \frac{1}{1-r} f \left((1-r) x\right) \right) \right] \right) dx \\
&\leq  \int_{0}^{1} x^{-2} \left( 1 -\exp \left[- C b^{2} x^{2} \right] \right) d x + \int_{1}^{+\infty} x^{-2} \left( 1 -\exp \left[- C b^{2} x \right] \right) dx.
\end{align*}

The first integral is bounded by 
\[ \int_{0}^{1} x^{-2} C b^{2} x^{2} d x = C b^{2}, \]
while the second one is, by the change of variable $y=C b^2 \, x$, equal to
\begin{align*}
Cb^{2}\int_{Cb^{2}}^{+\infty} \frac{1}{y^{2}} \left(1- e^{-y} \right) dy &\leq Cb^{2} \left\{\mathbf{1}_{\{C b^2 \leq 1\}}  \left| \int_{Cb^{2}}^{1} \frac{1}{y^{2}} \left(1- e^{-y} \right) dy \right| + \int_{1}^{+\infty} \frac{1}{y^{2}} \left(1- e^{-y} \right) dy \right\} \\
&\leq Cb^{2}  \left\{ \mathbf{1}_{\{C b^2 \leq 1\}} \left| \int_{Cb^{2}}^{1} \frac{1}{y} dy \right| + \int_{1}^{+\infty} \frac{1}{y^{2}} dy \right\} \\
&= Cb^{2} \left(\left| \log \left( \frac{Cb^{2}}{2} \right) \right| + 1 \right).
\end{align*}
Thus, we obtain
\begin{align*}
 & \int_{0}^{\infty} x^{-2} \left( 1 -\exp \left[- \frac{b^{2}}{2} \left( \frac{1}{r} f(r x) + \frac{1}{1-r} f \left((1-r) x\right) \right) \right] \right) dx \\
&\leq C b^{2} \left( \frac{1}{2} \left| \log \left( \frac{Cb^{2}}{2} \right) \right| + 1 \right) \\
& \leq C' b^{2} |\log(b)|,
\end{align*}
where $C'$ is some universal constant. Setting $M:= \sqrt{\frac{2}{\pi}}C'$, the claim follows.

\end{proof}

\subsection{Differentiability properties of conditional expectations}

In this section, we aim at proving that, for any $\delta \geq 0$, for a large class of functionals $\Phi:L^{2}(0,1) \to \mathbb{R}$, the quantity $E^{\delta} [\Phi(X) | X_{r} = b]$
is twice differentiable in $b$ at $b=0$. To do so we shall exploit Proposition \ref{levy_meas_pinned_sqred_bridges} above, which provides the existence, for all $r \in (0,1)$, of a L\'evy measure on $C_{+}([0,1])$ corresponding to the convolution semi-group $\left( Q^{0} \left[ \ \cdot \ | X_{r} = x \right] \right)_{x \geq 0}$. Note that the measures $M^{r}$, $r \in (0,1)$, are not finite. However they have the following important property:

\begin{lm}
\label{finite_moment_nr}
For all $r \in (0,1)$,
\[ \int \| X \|_{1} \ dM^{r}(X) = \frac{1}{3} < \infty. \]
\end{lm}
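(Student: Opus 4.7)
The plan is to first reduce the computation of $\int \|X\|_1\,dM^r(X)$ to an integral against $M_0$, using the explicit decomposition of $M^r$ exhibited in the proof of Proposition~\ref{levy_meas_pinned_sqred_bridges}, and then to evaluate $\int \|X\|_1\,dM_0$ by combining the L\'evy--Khintchine formula~\eqref{levy_khintchine_bridges} with the additivity of squared Bessel bridges. Recall that $M^r = \frac{1}{r} M^r_1 + \frac{1}{1-r} M^r_2$, where $M^r_1$ and $M^r_2$ are respectively the images of $M_0$ under the reversal-and-scaling maps $X \mapsto r X_{(r-\cdot)/r}\,\mathbf{1}_{[0,r]}$ and $X \mapsto (1-r) X_{(\cdot-r)/(1-r)}\,\mathbf{1}_{[r,1]}$. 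Performing the changes of variable $v = (r-u)/r$ and $v = (u-r)/(1-r)$ inside $\int \|X\|_1\,dM^r_i$ produces factors of $r^2$ and $(1-r)^2$ respectively, so that a miraculous cancellation gives
\[
\int \|X\|_1\,dM^r(X) \;=\; \Bigl(\tfrac{r^2}{r} + \tfrac{(1-r)^2}{1-r}\Bigr)\int \|X\|_1\,dM_0(X) \;=\; \int \|X\|_1\,dM_0(X),
\]
which is in particular independent of $r$.

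Next, I would evaluate $\int \|X\|_1\,dM_0$ as follows. Applying \eqref{levy_khintchine_bridges} with $\delta = 0$ and the constant test function $\theta \equiv \lambda$, I get $Q^0_{x,0}[\exp(-\lambda \|X\|_1)] = \exp\bigl(-x \int (1 - e^{-\lambda \|X\|_1})\,dM_0\bigr)$. Differentiating both sides at $\lambda = 0$ (which is justified by monotone convergence, since $\lambda^{-1}(1-e^{-\lambda\|X\|_1}) \uparrow \|X\|_1$ as $\lambda \downarrow 0$), one obtains the key identity
\[
E_{Q^0_{x,0}}[\|X\|_1] \;=\; x\,\int \|X\|_1\,dM_0(X).
\]
To evaluate the left-hand side, I would compute $E_{Q^0_{x,0}}[X_t]$ using Proposition~\ref{levy} in the form $Q^1_{x,0} = Q^1 \ast Q^0_{x,0}$: this gives $E_{Q^0_{x,0}}[X_t] = E_{Q^1_{x,0}}[X_t] - E_{Q^1}[X_t]$. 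The one-dimensional squared Bessel bridge $Q^1_{x,0}$ is the law of $W^2$ where $W$ is a Brownian bridge from $\sqrt{x}$ to $0$, hence $E_{Q^1_{x,0}}[X_t] = x(1-t)^2 + t(1-t)$; and $E_{Q^1}[X_t] = t(1-t)$ (variance of the standard Brownian bridge). Substituting and integrating in $t$ yields $E_{Q^0_{x,0}}[\|X\|_1] = x\int_0^1 (1-t)^2\,dt = x/3$, so that $\int \|X\|_1\,dM_0 = 1/3$, which combined with the reduction in the first paragraph gives the claim.

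The proof is essentially a bookkeeping exercise once the two ingredients (L\'evy--Khintchine and additivity) are in place. The only mildly delicate point is the justification of the differentiation at $\lambda = 0$ in the L\'evy--Khintchine identity, but this is handled cleanly by monotone convergence as indicated; finiteness of all quantities is confirmed a posteriori by the explicit value $1/3$.
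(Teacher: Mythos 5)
Your argument is correct, but it follows a genuinely different route from the paper. The paper works directly with the pinned measure $M^{r}$: it takes $\theta\equiv\lambda$ in the identity \eqref{Levy_meas}, compares with the explicit Laplace transform \eqref{expr_cond_exp} (the hyperbolic-cotangent formula imported from (3.18) of \cite{EladAltman2019} and already derived in the proof of Proposition \ref{bound_lip_func}), obtains the exact identity \eqref{expr_moment_nr} for $\int(1-e^{-\lambda\|X\|_{1}})\,dM^{r}$, and then divides by $\lambda$ and lets $\lambda\to 0$, using monotone convergence on one side and the expansion $f(u)=\tfrac{u^{2}}{3}+o(u^{2})$ on the other. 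You instead (a) reduce $\int\|X\|_{1}\,dM^{r}$ to $\int\|X\|_{1}\,dM_{0}$ via the scaling decomposition $M^{r}=\tfrac1r M^{r}_{1}+\tfrac{1}{1-r}M^{r}_{2}$ from the proof of Proposition \ref{levy_meas_pinned_sqred_bridges} (the factors $r^{2}/r+(1-r)^{2}/(1-r)=1$ do cancel as you claim), (b) use the L\'evy--Khintchine formula \eqref{levy_khintchine_bridges} with $\delta=0$ and constant $\theta$ to identify $\int\|X\|_{1}\,dM_{0}$ with $x^{-1}E_{Q^{0}_{x,0}}[\|X\|_{1}]$, and (c) compute that first moment elementarily from the additivity $Q^{1}_{x,0}=Q^{1}\ast Q^{0}_{x,0}$ and the fact that $Q^{1}_{x,0}$ is the law of the square of a Brownian bridge from $\sqrt{x}$ to $0$, giving $E_{Q^{0}_{x,0}}[X_{t}]=x(1-t)^{2}$ and hence $x/3$. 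What your route buys is independence from the explicit Laplace transform \eqref{expr_cond_exp} (no hyperbolic functions, only first moments), at the price of invoking the internal decomposition of $M^{r}$ and the identification of the one-dimensional squared bridge; the paper's route is shorter in context precisely because \eqref{expr_cond_exp} is already available. The only point to phrase carefully is the "differentiation at $\lambda=0$": as you indicate, one should pass to the limit in $\lambda^{-1}\bigl(1-Q^{0}_{x,0}[e^{-\lambda\|X\|_{1}}]\bigr)$, using monotone convergence on both the $Q^{0}_{x,0}$- and $M_{0}$-sides together with $I(\lambda):=\int(1-e^{-\lambda\|X\|_{1}})\,dM_{0}\downarrow 0$ and $1-e^{-xI(\lambda)}\sim xI(\lambda)$ (equivalently, take logarithms first, as the paper implicitly does); with that said, the argument is complete and yields the value $\tfrac13$ independently of $r$, as required.
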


\begin{proof}
For all $x \geq 0$ and $\lambda > 0$, by \eqref{Levy_meas}, we have
\begin{align*}
Q^{0} \left[ \exp \left( - \lambda \| X \|_{1} \right) | X_{r} =x \right] =  \exp \left( - x \int \left( 1 - \exp \left( - \lambda \| X \|_{1} \right) \right) dM^{r}(X) \right).
\end{align*}
On the other hand, by \eqref{expr_cond_exp}, we have
\begin{align*}
 Q^{0} \left[ \exp \left( - \lambda \| X \|_{1} \right) | X_{r} =x \right]  &=  E^{0} \left[ \exp \left( - \lambda \| X \|^{2}  \right) | X_{r} =  \sqrt{x} \right] \\
 &=  \exp \left[- \frac{x}{2} \left( \frac{1}{r} f(\sqrt{2\lambda}r) + \frac{1}{1-r} f \left(\sqrt{2 \lambda} \left(1-r)\right) \right) \right) \right].
\end{align*}
Therefore, we deduce that, for all $\lambda > 0$,
\begin{equation}
\label{expr_moment_nr}
 \int \left( 1 - \exp \left( - \lambda \| X \|_{1} \right) \right) dM^{r}(X) = \frac{1}{2} \left( \frac{1}{r} f(\sqrt{2\lambda}r) + \frac{1}{1-r} f \left(\sqrt{2 \lambda} (1-r)\right) \right). 
\end{equation} 
But, by the monotone convergence theorem, we have
\[ \lim_{\lambda \to 0} \frac{1}{\lambda}   \int \left( 1 - \exp \left( - \lambda \| X \|_{1} \right) \right) dM^{r}(X)  =  \int \| X \|_{1} \ dM^{r}(X). \]
On the other hand, since $f(x) = \frac{x^{2}}{3} + o(x^{2})$ as $x \to 0$, we have
\[  \lim_{\lambda \to 0} \frac{1}{\lambda} \left( \frac{1}{r} f(\sqrt{2\lambda}r) + \frac{1}{1-r} f \left(\sqrt{2 \lambda} (1-r)\right) \right) = \frac{2}{3}. \]
Therefore, dividing both sides of \eqref{expr_moment_nr} by $\lambda$ and taking the limit $\lambda \to 0$, we obtain
\[ \int \| X \|_{1} \ dM^{r}(X) = \frac{1}{3},\]
which yields the claim.
\end{proof}

We are now in position to establish Proposition \ref{statement_diff_prop}, which is an immediate consequence of the following result. 

\begin{prop}
\label{diff_prop}
Let $\delta \geq 0$, and let $\Phi$ be a functional on $L^{2}(0,1)$ of the form
\begin{equation}
\label{lip_func_of_square}
 \Phi(X) = \Psi(X^{2}), \quad X \in L^{2}(0,1)
\end{equation}
where $\Psi: L^{1}(0,1) \to\mathbb{R}$ is bounded and globally Lipschitz continuous. Then, for all $r \in (0,1)$ and $b \geq 0$, we have
\begin{align}
\label{integral_formula_for_cond_exp}
&E^{\delta} \left[ \Phi(X) | X_{r} =b \right] = E^{\delta} \left[ \Phi(X) | X_{r} =0 \right] \\
\nonumber &+ 2 \int_{0}^{b} a \int \left(  E^{\delta} \left[ \Phi \left( \sqrt{ X^{2} + Z} \right) | X_{r} =a \right] - E^{\delta} \left[ \Phi(X) | X_{r} =a \right] \right) dM^{r}(Z) da.
\end{align} 
In particular, the quantity $E^{\delta} [\Phi(X) | X_{r} = b]$
is twice differentiable in $b$ at $0$, and 
\begin{align*}
&\frac{d^{2}}{db^{2}}  E^{\delta} [\Phi(X) | X_{r} = b] \biggr\rvert_{b=0}  = \\
&2 \int \left(  E^{\delta} \left[ \Phi \left(\sqrt{ X^{2} + Z} \right) | X_{r} =0 \right] - E^{\delta} \left[ \Phi(X) | X_{r} =0 \right] \right) dM^{r}(Z).
\end{align*}
\end{prop}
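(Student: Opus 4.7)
The starting point is to express the conditional expectation via the squared Bessel bridge representation. Setting $G(y) := Q^{\delta}[\Psi \mid X_r = y]$ and $G_Z(y) := Q^{\delta}[\Psi(X+Z) \mid X_r = y]$ for $Z \in L^{1}_{+}(0,1)$, we have $E^{\delta}[\Phi(X) \mid X_r = b] = G(b^2)$, and the change of variables $s = a^2$ on the right-hand side of \eqref{integral_formula_for_cond_exp} reduces the whole statement to the single identity
\[
G(y) - G(0) = \int_{0}^{y} \int \bigl( G_Z(s) - G(s) \bigr) \, dM^{r}(Z) \, ds, \qquad y \geq 0. \quad (\ast)
\]
By the additivity of Proposition \ref{additivity_pinned}, $Q^{\delta}[\,\cdot \mid X_r = y] = Q^{\delta}[\,\cdot \mid X_r = 0] \ast \mu_{y}$, where $\mu_{y} := Q^{0}[\,\cdot \mid X_r = y]$. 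Setting $g(V) := E^{\delta}[\Psi(X^2 + V) \mid X_r = 0]$, which is bounded by $\|\Psi\|_{\infty}$ and Lipschitz on $L^{1}_{+}(0,1)$ with constant no larger than that of $\Psi$, we get $G(y) = \int g \, d\mu_{y}$ and $G_Z(y) = \int g(\cdot + Z) \, d\mu_{y}$. Thus $(\ast)$ becomes a statement about the convolution semigroup $(\mu_y)$.

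To prove the semigroup identity, I would invoke the Lévy representation of Proposition \ref{levy_meas_pinned_sqred_bridges}: realize $(\mu_{y})_{y \geq 0}$ as the marginals of a pure-jump Lévy process $(V_{y})_{y \geq 0}$ on $L^{1}_{+}(0,1)$ whose jumps form a Poisson random measure $\mathcal{N}(ds,dZ)$ with intensity $ds \otimes dM^{r}(Z)$. By Lemma \ref{finite_moment_nr}, $\int \|Z\|_{1} \, dM^{r}(Z) = 1/3 < \infty$, so the Lipschitz bound $|g(V+Z) - g(V)| \leq L_{\Psi} \|Z\|_{1}$ makes the telescoping sum
\[
g(V_{y}) - g(0) = \sum_{0 < s \leq y} \bigl( g(V_{s-} + \Delta V_{s}) - g(V_{s-}) \bigr)
\]
absolutely convergent, with $L^{1}(\mathbb{P})$-integrable increments. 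Taking expectations, applying the compensation formula for $\mathcal{N}$, and using Fubini (all justified by the Lipschitz bound combined with Lemma \ref{finite_moment_nr}) yields exactly $(\ast)$.

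For the twice differentiability at $0$, observe that the inner integrand in \eqref{integral_formula_for_cond_exp}, i.e.\ the function
\[
J(a) := \int \bigl( E^{\delta}[\Phi(\sqrt{X^2+Z}) \mid X_{r} = a] - E^{\delta}[\Phi(X) \mid X_{r} = a] \bigr) \, dM^{r}(Z),
\]
is continuous at $a = 0$ by dominated convergence, since $|g(V+Z) - g(V)| \leq L_{\Psi} \|Z\|_{1}$ provides an $M^{r}$-integrable majorant. The integral formula then shows that $H(b) := E^{\delta}[\Phi(X) \mid X_{r} = b]$ is $C^{1}$ on $[0, \infty)$ with $H'(b) = 2 b \, J(b)$; in particular $H'(0) = 0$, and $H'(b)/b \to 2 J(0)$ as $b \to 0^{+}$, which is precisely the claimed value of $H''(0)$.

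The main delicate point I anticipate is the rigorous use of the infinite-dimensional compensation formula. A cleaner route, avoiding explicit construction of $(V_y)$, is to verify $(\ast)$ first for exponentials $g_{\theta}(V) = e^{-\langle \theta, V \rangle}$---where it follows at once by differentiating the Laplace functional $e^{-y \Lambda(\theta)}$ of Proposition \ref{levy_meas_pinned_sqred_bridges}---and then extend to bounded Lipschitz $g$ by approximating it in a bounded-pointwise sense with uniform Lipschitz constant by finite linear combinations of exponentials, passing to the limit in $(\ast)$ via dominated convergence. Either route relies essentially on the $\|\cdot\|_{1}$-integrability of $M^{r}$ ensured by Lemma \ref{finite_moment_nr}.
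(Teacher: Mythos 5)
Your proposal is correct in substance, and your argument for the twice differentiability at $0$ (continuity of the inner integral $J$ at $0$ by domination with $L_{\Psi}\|Z\|_{1}$, then $H'(b)=2bJ(b)$, $H'(0)=0$, $H'(b)/b\to 2J(0)$) is exactly the paper's. Where you diverge is in how you establish the integral identity. Your primary route --- realizing $(\mu_y)_{y\geq 0}$ as the marginals of a path-valued pure-jump L\'evy process with Poisson jump measure of intensity $ds\otimes dM^{r}$, telescoping over jumps and applying the compensation formula --- is precisely the route the paper's own remark after the statement identifies as ``the idea behind this Proposition'' and then deliberately declines to implement, because it requires constructing the $C_{+}([0,1])$-valued subordinator and an infinite-dimensional It\^{o}/compensation formula \`a la Pitman--Yor, machinery not otherwise needed since no pathwise statement is required; the delicate point you flag is thus a real one, not a formality. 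Your fallback route coincides with the paper's actual proof: verify the identity for exponential functionals by differentiating the Laplace functional obtained from Proposition \ref{additivity_pinned} together with \eqref{Levy_meas}, then extend to bounded Lipschitz $\Psi$ by approximation and dominated convergence using $\int\|Z\|_{1}\,dM^{r}(Z)<\infty$. One imprecision there: you ask for approximants with a \emph{uniform} Lipschitz constant, but the density construction of Proposition \ref{density_l1} (which the paper reuses, noting that only conditions \ref{fst_condition} and \ref{thd_condition} of $(A_1)$ are needed when $\Psi$ is merely Lipschitz) yields constants that are uniform only in one index at a time; this is harmless because the limits are taken successively, with three applications of dominated convergence, each dominated by a (stage-dependent) multiple of $\|Z\|_{1}$. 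A nice feature of your write-up not made explicit in the paper is the reduction via $g(V):=E^{\delta}[\Psi(X^{2}+V)\,|\,X_{r}=0]$, which isolates the identity as a statement purely about the convolution semigroup $\left(Q^{0}[\,\cdot\,|X_{r}=y]\right)_{y\geq 0}$ and its L\'evy measure $M^{r}$.
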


\begin{rk}
The idea behind this Proposition is the fact that for all $r \in (0,1)$
\[ \left( Q^{0} \left[\ \cdot \ | X_{r} =x \right] \right)_{x \geq 0} \]
is a convolution semi-group, to which one could, using the same techniques as in \cite{pitman1982decomposition}, associate a subordinator with values in $C_{+}([0,1])$. That subordinator would be a compound Poisson point process with intensity $dt \otimes M^{r}$. For such a process one should have an It\^{o} formula as in Theorem 5.1 of \cite{ikeda2014stochastic}, from which formula \eqref{integral_formula_for_cond_exp} would then follow simply by taking expectations. Although such a strategy should be possible to implement using the constructions done in \cite{pitman1982decomposition}, since we do not need any pathwise statement, we prefer to resort to a more basic proof based on a density argument.   
\end{rk}

\begin{proof}
The second statement follows from equality \eqref{integral_formula_for_cond_exp}. Indeed, for all fixed $Z \in C_{+}([0,1])$, the quantity 
\[E^{\delta} \left[ \Phi \left(\sqrt{ X^{2} + Z} \right) | X_{r} =a \right] - E^{\delta} \left[ \Phi(X) | X_{r} =a \right]  \]
is continuous in $a$. Moreover, it is dominated by $L \|Z\|_{1}$, where  $L>0$ is a Lipschitz constant for $\Psi$. Since $\|Z\|_{1}$ is integrable w.r.t. $M^{r}(dZ)$, we deduce that the quantity
\[F(a) := \int \left( E^{\delta} \left[ \Phi \left(\sqrt{ X^{2} + Z} \right) | X_{r} =a \right] - E^{\delta} \left[ \Phi(X) | X_{r} =a \right] \right) M^{r}(dZ) \]
is continuous in $a$. But, by \eqref{integral_formula_for_cond_exp}, we have, for all $b \geq 0$,
\[ E^{\delta} \left[ \Phi(X) | X_{r} =b \right] = E^{\delta} \left[ \Phi(X) | X_{r} =0 \right] + 2 \int_{0}^{b} a F(a) da.  \]
Hence, we deduce that $b \to E^{\delta} \left[ \Phi(X) | X_{r} =b \right] $ is twice differentiable at 0, with its derivative there given by $2 F(0)$. This yields the second statement.

We now prove the first statement. We start by proving \eqref{integral_formula_for_cond_exp} for all $\Phi \in \mathscr{S}$. By linearity, we may assume that $\Phi$ is of the form
\eqref{exp_functional}, which is tantamount to $\Phi$ satisfying \eqref{lip_func_of_square}, with $\Psi$ given by
\[ \Psi(Z) = \exp \left( - \langle \theta, Z \rangle \right), \quad Z \in L^{1}(0,1)\]
for some $\theta : [0,1] \to \mathbb{R}_{+}$ bounded and Borel. To do so, note that, as a consequence of Proposition \ref{additivity_pinned}, for all $x \geq 0$, we have
\[ Q^{\delta} [\Psi(X) | X_{r} = x] = Q^{\delta} [\Psi(X) | X_{r} = 0] \, Q^{0} [\Psi(X) | X_{r} = x], \]
so that, by  \eqref{Levy_meas},
\begin{align*}
Q^{\delta} [\Psi(X) | X_{r} = x] = Q^{\delta} [\Psi(X) | X_{r} = 0] \, \exp \left(x \int \left( \Psi(Z) - 1 \right) dM^{r}(Z) \right). 
\end{align*}
Hence, differentiating in $x$, we have
\begin{align*}
\frac{d}{dx}  Q^{\delta} [\Psi(X) | X_{r} = x] &= \int \left( \Psi(Z) - 1 \right) dM^{r}(Z) \, Q^{\delta} [\Psi(X) | X_{r} = x] \\  
&= \int \left(Q^{\delta} [\Psi(Z) \Psi(X) | X_{r} = x]  - Q^{\delta} [\Psi(X) | X_{r} = x]  \right) dM^{r}(Z) \\
&= \int \left(Q^{\delta} [\Psi(X + Z) | X_{r} = x]  - Q^{\delta} [\Psi(X) | X_{r} = x]  \right) dM^{r}(Z).
\end{align*}
Hence, for all $x \geq 0$, we have
\begin{align*}
Q^{\delta} [\Psi(X) | X_{r} = x] &= Q^{\delta} [\Psi(X) | X_{r} = 0] \\
&+ \int_{0}^{x}  \int \left(Q^{\delta} [\Psi(X + Z) | X_{r} = y]  - Q^{\delta} [\Psi(X) | X_{r} = y]  \right) dM^{r}(Z) dy. 
\end{align*}
Therefore, for all $b \geq 0$, we have 
\begin{align*}
&E^{\delta} [\Phi(X) | X_{r} = b^{2}] = E^{\delta} [\Phi(X) | X_{r} = 0] \\
&+ \int_{0}^{b^{2}}  \int \left(E^{\delta} [\Phi(\sqrt{X + Z}) | X_{r} = \sqrt{y}]  - E^{\delta} [\Phi(X) | X_{r} = \sqrt{y}]  \right) dM^{r}(Z) dy, 
\end{align*}
so that, performing the change of variable $a := \sqrt{y}$, we obtain \eqref{integral_formula_for_cond_exp}. Let now $\Phi$ be of the form \eqref{lip_func_of_square}, with $\Psi: L^{1}(0,1) \to \mathbb{R}$ bounded and globally Lipschitz continuous. We can construct a family of approximating functionals $(\Phi^{d}_{n,k})_{d,n,k \geq 1}$ as in the proof of Proposition \ref{density_l1}. Although $\Psi$ is not necessarily $C^{1}$, reasoning as in the proof of Proposition \ref{density_l1}, we can check that these sequences will satisfy
\[ \underset{d \to \infty}{\lim} \ \underset{n \to \infty}{\lim} \ \underset{k \to \infty}{\lim} \Phi^{d}_{n,k} = \Phi, \]
where each limit happens \textit{almost} in the sense of $(A_1)$ : conditions \ref{fst_condition} and \ref{thd_condition} of Definition \ref{pdi_conv} hold, only condition \ref{snd_condition} may not hold. Since, for all $d,n,k \geq 1$, $\Phi^{d}_{n,k}$ lies in $\mathscr{S}$, by the previous point, we have
\begin{align}
\label{integral_formula_for_cond_exp_nk}
& E^{\delta} \left[ \Phi^{d}_{n,k}(X) | X_{r} =b \right] = E^{\delta} \left[ \Phi^{d}_{n,k}(X) | X_{r} =0 \right] \\
\nonumber &+ 2 \int_{0}^{b} a \int \left(  E^{\delta} \left[ \Phi^{d}_{n,k} \left( \sqrt{ X^{2} + Z} \right) | X_{r} =a \right] - E^{\delta} \left[ \Phi^{d}_{n,k}(X) | X_{r} =a \right] \right) dM^{r}(Z) da.
\end{align} 
Now, for all $X \in L^{2}(0,1)$, we have
\[ \underset{d,n,k \to \infty}{\lim} \Phi^{d}_{n,k}(X) = \Phi(X), \]
with the domination 
\[ \forall d,n,k \geq 1, \quad \| \Phi^{d}_{n,k}\|_{\infty} \leq \| \Phi \|_{\infty}, \]
from which we deduce that
\[ \underset{d,n,k \to \infty}{\lim} E^{\delta} [\Phi^{d}_{n,k}(X) | X_{r} = b] = E^{\delta} [\Phi(X) | X_{r} = b], \]
and
\[ \underset{d,n,k \to \infty}{\lim} E^{\delta} [\Phi^{d}_{n,k}(X) | X_{r} = 0] = E^{\delta} [\Phi(X) | X_{r} = 0]. \]
We also deduce therefrom that, for all $Z \in C_{+}([0,1])$ and $a \in [0,b]$, we have
\begin{align*} 
\underset{d,n,k \to \infty}{\lim} &E^{\delta} \left[ \Phi^{d}_{n,k} \left( \sqrt{ X^{2} + Z} \right) | X_{r} =a \right] - E^{\delta} \left[ \Phi^{d}_{n,k}(X) | X_{r} =a \right] = \\
&E^{\delta} \left[ \Phi \left( \sqrt{ X^{2} + Z} \right) | X_{r} =a \right] - E^{\delta} \left[ \Phi(X) | X_{r} =a \right],
\end{align*}
and, by condition \ref{thd_condition} in Definition \ref{pdi_conv}, these three limits happen with uniform domination by $\|Z\|_{1}$. Since $\|Z\|_{1}$ is integrable with respect to $dM^{r}(Z) $ over $C_{+}([0,1])$, by three successive applications of the dominated convergence theorem, we deduce that
\begin{align*} 
\underset{d,n,k \to \infty}{\lim} &\int_{0}^{b} a \int \left( E^{\delta} \left[ \Phi^{d}_{n,k} \left( \sqrt{ X^{2} + Z} \right) | X_{r} =a \right] - E^{\delta} \left[ \Phi^{d}_{n,k}(X) | X_{r} =a \right] \right) dM^{r}(Z) da  \\
&=\int_{0}^{b} a \int \left(E^{\delta} \left[ \Phi \left( \sqrt{ X^{2} + Z} \right) | X_{r} =a \right] - E^{\delta} \left[ \Phi(X) | X_{r} =a \right] \right) d M^{r}(Z) da. 
\end{align*}
Hence, sending successively $k, n$ and $d$ to $ \infty$ in \eqref{integral_formula_for_cond_exp_nk}, we deduce that $\Phi$ also satisfies \eqref{integral_formula_for_cond_exp}. 
This yields the claim.
\end{proof}

As a consequence of the above proposition, we deduce an improved order $0$ estimate for functionals of the form \eqref{lip_func_of_square}.

\begin{prop}
\label{bound_lip_func_square}
Let $\Phi: L^{2}(0,1) \to \mathbb{R}$ be a functional of the form \eqref{lip_func_of_square}, with $\Psi:L^{1}(0,1) \to \mathbb{R}$ bounded and globally Lipschitz continous, with Lipschitz constant $L>0$. Then, for all $\delta \geq 0$ the following holds
\begin{equation}
\label{bound_cond_exp_for_func_of_square}
\forall r \in (0,1),  \forall b > 0, \quad \left| \mathcal{T}^{0}_{b} E^{\delta} [\Phi(X) | X_{r} = \cdot] \right| \leq \frac{L}{3} \ b^{2}.
\end{equation} 
In particular, for all $\delta > 1$, the function 
\[(r,b) \mapsto  \mathcal{T}^{0}_{b} E^{\delta} [\Phi(X) | X_{r} = \cdot] \]
 is integrable with respect to the measure $\frac{p^{\delta}_{r}(b)}{b^{3}} \, dr \, db$ on $(0,1) \times \mathbb{R}^{*}_{+}$.
\end{prop}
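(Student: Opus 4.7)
The plan is to combine the integral representation from Proposition \ref{diff_prop} with the Lipschitz bound on $\Psi$ and the finite first moment from Lemma \ref{finite_moment_nr}. Since $\Phi = \Psi(X^{2})$ with $\Psi$ bounded and Lipschitz, Proposition \ref{diff_prop} applies and gives, for all $r\in(0,1)$ and $b\geq 0$,
\[ \mathcal{T}^{0}_{b} E^{\delta}[\Phi(X)|X_{r}=\cdot]
= 2\int_{0}^{b} a \int \Big(E^{\delta}[\Phi(\sqrt{X^{2}+Z})|X_{r}=a] - E^{\delta}[\Phi(X)|X_{r}=a]\Big)\, dM^{r}(Z)\, da. \]
This formula reduces the problem to estimating the inner integrand uniformly in $a$.

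For the inner integrand, I would use that $\Phi(\sqrt{X^{2}+Z}) - \Phi(X) = \Psi(X^{2}+Z) - \Psi(X^{2})$, so by the Lipschitz property of $\Psi$ with constant $L$,
\[ \big|E^{\delta}[\Phi(\sqrt{X^{2}+Z})|X_{r}=a] - E^{\delta}[\Phi(X)|X_{r}=a]\big| \leq L\,\|Z\|_{1}, \]
uniformly in $a$. Substituting into the integral representation, applying Fubini (the integrand is nonnegative after taking absolute values), and using Lemma \ref{finite_moment_nr} which states $\int \|Z\|_{1}\,dM^{r}(Z) = 1/3$, I get
\[ \big|\mathcal{T}^{0}_{b} E^{\delta}[\Phi(X)|X_{r}=\cdot]\big| \leq 2L \int \|Z\|_{1}\,dM^{r}(Z) \int_{0}^{b} a\, da = 2L \cdot \tfrac{1}{3} \cdot \tfrac{b^{2}}{2} = \tfrac{L}{3} b^{2}, \]
which is precisely the desired estimate \eqref{bound_cond_exp_for_func_of_square}.

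For the integrability claim when $\delta>1$, I would insert this bound into the explicit form of $p^{\delta}_{r}(b)$ from \eqref{density}. Writing $p^{\delta}_{r}(b)/b^{3} = c_{\delta}(r(1-r))^{-\delta/2} b^{\delta-4} \exp(-b^{2}/(2r(1-r)))$ and multiplying by the bound $\tfrac{L}{3}b^{2}$, one gets $c_{\delta} L (r(1-r))^{-\delta/2} b^{\delta-2}\exp(-b^{2}/(2r(1-r)))$. The $b$-integral converges near $0$ precisely because $\delta-2>-1$, i.e.\ $\delta>1$, while the Gaussian factor ensures convergence at infinity; after performing the change of variable $b = \sqrt{2r(1-r)}\,u$, the resulting $r$-integral involves $(r(1-r))^{-1/2}$, which is integrable on $(0,1)$. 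The main step of the whole proof, really the only nontrivial one, is the invocation of Proposition \ref{diff_prop}; everything else is a direct computation.
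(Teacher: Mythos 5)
Your proof is correct and follows exactly the paper's argument: the estimate comes from the integral representation \eqref{integral_formula_for_cond_exp} of Proposition \ref{diff_prop}, the Lipschitz bound $|\Psi(X^2+Z)-\Psi(X^2)|\leq L\|Z\|_1$, and Lemma \ref{finite_moment_nr}, giving $2\cdot\tfrac{L}{3}\cdot\tfrac{b^2}{2}=\tfrac{L}{3}b^2$. The integrability claim, which the paper only sketches by referring to the computations in Proposition \ref{bound_lip_func}, is the same Gamma-integral bookkeeping you spell out, with the correct conclusion that $\delta>1$ ensures convergence near $b=0$ and the resulting $(r(1-r))^{-1/2}$ factor is integrable on $(0,1)$.
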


\begin{proof}
The integrability claim is deduced from the estimate \eqref{bound_cond_exp_for_func_of_square} using similar computations as in the proof of Proposition \ref{bound_lip_func} above, so we are only left to prove the bound \eqref{bound_cond_exp_for_func_of_square}.
By \eqref{integral_formula_for_cond_exp}, for all $\delta \geq 0$, $r \in (0,1)$ and $b \geq 0$, we have
\begin{align*}
&\left| \mathcal{T}^{0}_{b} E^{\delta} [\Phi(X) | X_{r} = \cdot] \right| = |E^{\delta} [\Phi(X) | X_{r} = b] - E^{\delta} [\Phi(X) | X_{r} = 0] | \\
&\leq 2 \int_{0}^{b} a \int \left|  E^{\delta} \left[ \Phi \left( \sqrt{ X^{2} + Z} \right) | X_{r} =a \right] - E^{\delta} \left[ \Phi(X) | X_{r} =a \right] \right| dM^{r}(Z) da \\
&\leq 2 \int_{0}^{b} a \int L \|Z\|_{1}  dM^{r}(Z) da.
\end{align*}
But, by Lemma \ref{finite_moment_nr}, the last expression equals $  2 \int_{0}^{b} a \, \frac{L}{3} \, da = \frac{L}{3}b^{2}$, whence the claim.
\end{proof}

\subsection{A second-order Taylor estimate}

\begin{prop}
\label{taylor_est_cond_exp}
Let $\delta > 0$, and let $\Phi$ be a functional on $L^{2}(0,1)$ of the form
\[ \Phi(X) = \Psi(X^{2}), \quad X \in L^{2}(0,1) \]
where $\Psi : L^{1}(0,1) \to \mathbb{R} $ is bounded and globally Lipschitz continuous. Assume furthermore that there exists $L>0$ such that 
\[ \forall X, Z, Z' \in L^{1}(0,1), \quad |\Psi(X+Z+Z') - \Psi(X+Z) - \Psi(X+Z') + \Psi(X) | \leq L \|Z\|_{1} \|Z'\|_{1}. \]
Then, for all $r \in (0,1)$, the quantity $E^{\delta} [\Phi(X) | X_{r} = b] $
is twice differentiable in $b$ at $0$. Moreover, for all $b \geq 0$,
\begin{equation}
\label{the_estimate}
\left| \mathcal{T}^{2}_{0,b} E^{\delta} [\Phi(X) | X_{r} = \cdot] \right| \leq  L b^{4}.
\end{equation} 
In particular, the function 
\[(r,b) \mapsto  \mathcal{T}^{2}_{0,b} E^{\delta} [\Phi(X) | X_{r} = \cdot]  \]
 is integrable with respect to the measure $\frac{p^{\delta}_{r}(b)}{b^{3}} \, dr \, db$ on $(0,1) \times \mathbb{R}^{*}_{+}$.
\end{prop}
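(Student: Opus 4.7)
The plan is to apply Proposition \ref{diff_prop} twice, at two different levels, and combine the resulting integral representations with the second-order hypothesis on $\Psi$. Since $\Psi$ is bounded and Lipschitz, Proposition \ref{diff_prop} applies directly to $\Phi$, giving twice-differentiability at $0$ with second derivative $2F(0)$, where
\[ F(a) := \int \Bigl( E^{\delta}\bigl[\Phi(\sqrt{X^{2}+Z})\,\bigr|\,X_{r}=a\bigr] - E^{\delta}[\Phi(X)\,|\,X_{r}=a]\Bigr)\, dM^{r}(Z). \]
The integral representation \eqref{integral_formula_for_cond_exp} also forces the first derivative at $0$ to vanish (since $aF(a) \to 0$ as $a \to 0$), so a short computation gives
\[ \mathcal{T}^{\,2}_{0,b}\, E^{\delta}[\Phi(X)\,|\,X_{r}=\cdot] = 2\int_{0}^{b} a\bigl(F(a)-F(0)\bigr)\, da. \]

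For each fixed $Z \in C_{+}([0,1])$, I would then apply Proposition \ref{diff_prop} a second time, to the functional $\tilde{\Phi}(X) := \Phi(\sqrt{X^{2}+Z}) - \Phi(X) = \tilde{\Psi}(X^{2})$, where $\tilde{\Psi}(Y) := \Psi(Y+Z) - \Psi(Y)$ is bounded and globally Lipschitz on $L^{1}(0,1)$. The crucial input is that the second-order hypothesis on $\Psi$ improves the Lipschitz constant of $\tilde{\Psi}$ to $L\|Z\|_{1}$: for all $Y, Z' \in L^{1}_{+}(0,1)$,
\[ |\tilde{\Psi}(Y+Z') - \tilde{\Psi}(Y)| = |\Psi(Y+Z+Z') - \Psi(Y+Z) - \Psi(Y+Z') + \Psi(Y)| \leq L\,\|Z\|_{1}\,\|Z'\|_{1}. \]
Applying \eqref{integral_formula_for_cond_exp} to $\tilde{\Phi}$, bounding the integrand pointwise, and using Lemma \ref{finite_moment_nr} yields
\[ \bigl|E^{\delta}[\tilde{\Phi}(X)\,|\,X_{r}=a] - E^{\delta}[\tilde{\Phi}(X)\,|\,X_{r}=0]\bigr| \leq 2\int_{0}^{a}\! s \int L\,\|Z\|_{1}\,\|Z'\|_{1}\, dM^{r}(Z')\, ds = \tfrac{1}{3}\, L\, a^{2}\,\|Z\|_{1}. \]

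Integrating this estimate in $dM^{r}(Z)$ and invoking Lemma \ref{finite_moment_nr} once more gives $|F(a) - F(0)| \leq \tfrac{L}{9}\, a^{2}$, and substituting back produces
\[ \bigl|\mathcal{T}^{\,2}_{0,b}\, E^{\delta}[\Phi(X)\,|\,X_{r}=\cdot]\bigr| \leq 2\int_{0}^{b} a \cdot \tfrac{L}{9}\,a^{2}\, da = \tfrac{L}{18}\, b^{4} \leq L\, b^{4}, \]
which is \eqref{the_estimate}. The integrability against $\tfrac{p^{\delta}_{r}(b)}{b^{3}}\, dr\, db$ then follows by a direct computation entirely parallel to that in Proposition \ref{bound_lip_func}: the factor $b^{4}$ absorbs the $b^{\delta-4}$ singularity at the origin (since $\delta > 0$), while the Gaussian in $p^{\delta}_{r}$ controls the large-$b$ behaviour and the $r$-integrability near the endpoints.

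The main technical point is the double application of Proposition \ref{diff_prop}: one must verify the hypotheses of that proposition for $\tilde{\Phi}$ for each fixed $Z$, and then justify the exchange of the two $M^{r}$-integrals and the $ds$-integral by Fubini. Since $M^{r}$ is only $\sigma$-finite, the entire argument hinges on the identity $\int \|Z\|_{1}\, dM^{r}(Z) = 1/3$ from Lemma \ref{finite_moment_nr}, which is used twice and is exactly what converts the two first-order Lipschitz estimates into the quantitative fourth-order bound.
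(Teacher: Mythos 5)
Your proposal is correct and follows essentially the same route as the paper: both iterate the representation \eqref{integral_formula_for_cond_exp} from Proposition \ref{diff_prop}, use the second-order hypothesis on $\Psi$ to gain the factor $\|Z\|_{1}\|Z'\|_{1}$, invoke Lemma \ref{finite_moment_nr} twice, and land on the same bound $\tfrac{L}{18}b^{4}$. The only (cosmetic) difference is organisational: the paper sets $G(x)=Q^{\delta}[\Psi(X)\,|\,X_{r}=x]$, proves $G\in C^{2}(\mathbb{R}_{+})$ with $\|G''\|_{\infty}\le L/9$ and applies Taylor's theorem, whereas you stay with the integrated formula and bound the increment $F(a)-F(0)$ directly, which amounts to the same computation.
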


\begin{rk}
This proposition applies in particular to any $\Phi \in \mathcal{S} C^{1,1}_{b}(L^{1}(0,1))$.
\end{rk}

\begin{proof}
The differentiability property follows  from Proposition \ref{diff_prop}.  Moreover, the integrability claim follows from the estimate \eqref{the_estimate} using similar computations as in the proof of Proposition \ref{bound_lip_func}.
%
So there only remains to prove \eqref{the_estimate}. To do so, remark that, for all $b \geq 0$, we have
\[ E^{\delta} [\Phi(X) | X_{r} = b] = G(b^{2}), \]
where, for all $x \geq 0$, $G(x) := Q^{\delta} [\Psi(X) | X_{r} = x]$. As a consequence, we have
\begin{align*}
\mathcal{T}^{2}_{0,b} E^{\delta} [\Phi(X) | X_{r} = \cdot] =  G(b^{2}) - G(0) - b^{2} G'(0).
\end{align*}
Our claim will then follow from Taylor's theorem, once we have proved that $G$ is $C^{2}$ on $\mathbb{R}_{+}$. To do so, note that, by equality $\eqref{integral_formula_for_cond_exp}$, for all $x \geq 0$, we have
\[ G(x) = G(0) + \int_{0}^{x} \int \left( Q^{\delta} [\Psi(X+Z) | X_{r} = y] - Q^{\delta} [\Psi(X) | X_{r} = y] \right) d M^{r}(Z) dy. \]
Now, by the Lipschitz property of $\Psi$, and since $\int \|Z\|_{1} M^{r}(dZ) < \infty$, the function
\begin{align*}
\begin{cases} 
\mathbb{R}_{+} &\to \mathbb{R} \\
y &\mapsto \int \left( Q^{\delta} \left[ \Psi \left(X + Z \right) | X_{r} =y \right] - Q^{\delta} \left[ \Psi(X) | X_{r} =y \right] \right) M^{r}(dZ)
\end{cases}
\end{align*}
is continuous. Therefore, $G$ is differentiable on $\mathbb{R}_{+}$, and
\begin{equation}
\label{der_of_F}
 G'(x) = \int \left( Q^{\delta} [\Psi(X+Z) | X_{r} = x] - Q^{\delta} [\Psi(X) | X_{r} = x] \right) d M^{r}(Z), \quad x \geq 0.
\end{equation}
By the same arguments, for all $Z \in C_{+}([0,1])$, the quantity 
\[ Q^{\delta} [\Psi(X+Z) | X_{r} = x] -  Q^{\delta} [\Psi(X) | X_{r} = x]  \]
is differentiable on $\mathbb{R}_{+}$ with respect to $x$, with derivative given by
\begin{align*}
&\int \left( Q^{\delta} [\Psi(X+Z+Z') | X_{r} = x] - Q^{\delta} [\Psi(X+Z) | X_{r} = x] \right) dM^{r}(Z') \\
&- \int \left( Q^{\delta} [\Psi(X+Z') | X_{r} = x] - Q^{\delta} [\Psi(X) | X_{r} = x] \right) d M^{r}(Z') \\
&= \int Q^{\delta} \left[ \Psi(X+Z+Z') - \Psi(X+Z) - \Psi(X+Z') + \Psi(X) | X_{r} = x \right] d M^{r}(Z').
\end{align*}
Since for all $X, Z' \in C_{+}([0,1])$ we have
\[ | \Psi(X+Z+Z') - \Psi(X+Z) - \Psi(X+Z') + \Psi(X) | \leq L \|Z\|_{1} \|Z'\|_{1}, \]
we deduce that
\begin{align*}
\left| \frac{d}{dx} \left( Q^{\delta} [\Psi(X+Z) | X_{r} = x] -  Q^{\delta} [\Psi(X) | X_{r} = x] \right) \right| &\leq L \|Z\|_{1} \int \|Z'\|_{1} dM^{r}(Z') \\
&= \frac{L}{3} \|Z\|_{1}. 
\end{align*}
Since $\int \|Z\|_{1} dM^{r}(Z) < \infty$, we deduce that $G'$ is differentiable on $\mathbb{R}_{+}$, with derivative given for all $x \geq 0$ by $G''(x) = \int \int F(Z,Z') \, d M^{r}(Z') \, dM^{r}(Z)$, where
\begin{align*}
F(Z,Z') := Q^{\delta} [\Psi(X+Z+Z') - \Psi(X+Z) 
- \Psi(X+Z') + \Psi(X) \, | X_{r} = x].
\end{align*}
Note furthermore that
\[ \| G'' \|_{\infty} \leq L \int \|Z\|_{1} dM^{r}(Z) \int \|Z'\|_{1} dM^{r}(Z') = \frac{L}{9}. \]
Hence, by Taylor's theorem, we have, for all $x \geq 0$,
\begin{align*} 
|G(x) - G(0) - x G'(0)| \leq \| G'' \|_{\infty} \frac{x^{2}}{2} \leq  \frac{L}{18} x^{2}.  
\end{align*}
Therefore, for all $b \geq 0$, we have
\begin{align*}
|G(b^{2}) - G(0) - b^{2} G'(0)| &\leq \frac{L}{18} b^{4} \leq L \, b^{4}.
\end{align*}
This yields the claimed estimate.
\end{proof}

\begin{rk}
\label{open_quest_estimates}
Propositions \ref{diff_prop} and \ref{taylor_est_cond_exp} above a priori apply for functionals $\Phi$ of the form
\[ \Phi(X) = \Psi(X^{2}), \quad X \in L^{2}(0,1), \]
with $\Psi : L^{1}(0,1) \to \mathbb{R}$ sufficiently regular. It is not clear whether one could relax these conditions. For example, it is an open question whether the estimates would still hold for any $\Phi \in C^{1}_{b}(L^{2}(0,1))$, as is the case for the Taylor estimate at order $0$ obtained in Proposition \ref{bound_lip_func}. If such were the case, then we could also relax the conditions on $\Phi$ in Theorems \ref{general_IbPF_1} and \ref{general_IbPF_01}.
\end{rk}

\section{Extension of the integration by parts formulae to general functionals}

\label{section_ibpf}

We now turn to the proof of Theorems \ref{general_IbPF_13}, \ref{general_IbPF_1}, and \ref{general_IbPF_01}, stating that the IbPF on $P^{\delta}$ for $\delta \in (0,3)$ extend to general, sufficiently regular functionals on $L^{2}(0,1)$. To do so, we will use the density results stated in section \ref{section_density} to approximate  a general functional by elements of $\mathscr{S}$. Then we shall use the estimates obtained in \ref{section_ibpf} to show that the last term appearing in the IbPF converges when we take such approximating sequences. A little caveat here lies in the fact that our estimates concern Taylor remainders of the conditional expectations $E^{\delta} [\Phi(X) | X_{r} = b]$ for $b$ near $0$ and $r \in (0,1)$, 
while the last term in the IbPF contains Taylor remainders of the quantities $\Sigma^{\delta}_{r}(\Phi (X) \,|\, \cdot\,)$.
However, since the latter differs from the former only by a smooth function of $b^{2}$, we can actually re-express Taylor remainders of the latter as the sum of Taylor remainders of the former and some additional nicely-behaved terms. More precisely, the following holds:

\begin{lm}
Let $h \in C^{2}_{c}(0,1)$. Then, for all $\delta \in (1,3)$ and  $\Phi \in C^{1}_{b}(L^{2}(0,1))$  we have
\begin{equation}
\label{reexpressed_term_13}
\begin{split} 
&\int_{0}^{1}  h_{r}  \int_0^\infty b^{\delta-4} \Big[ \mathcal{T}^{\,0}_{b} \, \Sigma^{\delta}_{r}(\Phi (X) \,|\, \cdot\,) \Big]
  \d b \d r \\
= &\int_{0}^{1} dr h(r) \int_{0}^{\infty} db \ p^{\delta}_{r}(b) \frac{1}{b^{3}} \mathcal{T}^{0}_{b} E^{\delta} [\Phi(X) | X_{r} = \cdot] \\
+ &\frac{\Gamma(\frac{\delta-3}{2})}{\Gamma(\frac{\delta}{2})} \int_{0}^{1} dr \frac{h(r)}{(2r(1-r))^{3/2}} E^{\delta} [\Phi(X) | X_{r} = 0].
\end{split}
\end{equation}
Moreover, for all  $\Phi \in \mathcal{S}C^{1}_{b}\left( L^{1}(0,1) \right)$, we have
\begin{equation}
\label{reexpressed_term_1}
\begin{split} 
&\frac{1}{4} \int_{0}^{1} \d r \, h_r\, \frac{{\rm d}^{2}}{{\rm d} b^{2}} \, \Sigma^{1}_{r} [\Phi(X) \, | \, b]  \biggr\rvert_{b=0} \\
= - &\frac{1}{2\sqrt{2 \pi}} \int_{0}^{1} dr \frac{h(r)}{(r(1-r))^{3/2}} E^{1} [\Phi(X) | X_{r} = 0]  \\
+ &\frac{1}{2\sqrt{2 \pi}} \int_{0}^{1} dr \frac{h(r)}{(r(1-r))^{1/2}} \frac{d^{2}}{db^{2}} E^{1} [\Phi(X) | X_{r} = b] \biggr\rvert_{b=0}. 
\end{split}
\end{equation}
Finally, for all $\delta \in (0,1)$ and $\Phi \in \mathcal{S}C^{1,1}_{b}\left( L^{1}(0,1) \right)$, we have
\begin{equation}
\label{reexpressed_term_01}
\begin{split} 
&\int_{0}^{1} h_{r}  \int_0^\infty b^{\delta-4} \Big[ \mathcal{T}^{\,-2}_{b} \, \Sigma^{\delta}_{r}(\Phi (X) \,|\, \cdot\,) \Big]
  \d b \d r = \\
&\int_{0}^{1} dr \ h(r)  \int_{0}^{\infty} db \ \frac{p^{\delta}_{r}(b)}{b^{3}} \mathcal{T}^{-2}_{b} \left( E^{\delta} [\Phi(X) | X_{r} = \cdot] \right) \\
&+ \sum_{0 \leq j \leq 1} \frac{\Gamma \left(\frac{\delta-3}{2} + j \right)}{\Gamma \left(\frac{\delta}{2} \right)} \int_{0}^{1} dr \frac{h(r)}{2^{3/2} (r(1-r))^{3/2-j}} \frac{d^{2j}}{dx^{2j}} E^{\delta} [\Phi(X) | X_{r} = x]\biggr\rvert_{x=0}.
\end{split}
\end{equation}

\end{lm}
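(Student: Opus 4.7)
The plan is to reduce each of the three identities to an elementary computation involving Taylor remainders of a Gaussian. Factor
\[
\Sigma^{\delta}_{r}(\Phi(X)\mid b) = \frac{g(b)\,f(b)}{c_{\delta}(r(1-r))^{\delta/2}}, \qquad \frac{p^{\delta}_{r}(b)}{b^{3}} = \frac{b^{\delta-4}\,g(b)}{c_{\delta}(r(1-r))^{\delta/2}},
\]
where $g(b) := \exp(-b^{2}/(2r(1-r)))$, $f(b) := E^{\delta}[\Phi(X) \mid X_{r} = b]$ and $c_{\delta} := 2^{\delta/2-1}\Gamma(\delta/2)$. Since $g$ is a smooth function of $b^{2}$, $g(0)=1$, $g'(0)=0$, $g''(0) = -1/(r(1-r))$. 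The results of Section~\ref{section_estimates} supply the regularity of $f$ needed in each regime: Proposition~\ref{bound_lip_func} for $\delta\in(1,3)$ and $\Phi\in C^{1}_{b}(L^{2}(0,1))$; Proposition~\ref{diff_prop} (giving twice differentiability of $f$ at $0$, and in particular $f'(0)=0$) for $\delta=1$ and $\Phi\in\mathcal{S}C^{1}_{b}(L^{1}(0,1))$; and Proposition~\ref{taylor_est_cond_exp} for $\delta\in(0,1)$ and $\Phi\in\mathcal{S}C^{1,1}_{b}(L^{1}(0,1))$.

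For \eqref{reexpressed_term_13} I would use the algebraic identity
\[
\mathcal{T}^{0}_{b}(g f) \;=\; g(b)\,\mathcal{T}^{0}_{b}f \;+\; f(0)\,(g(b) - 1).
\]
Dividing by $c_{\delta}(r(1-r))^{\delta/2}$ and multiplying by $b^{\delta-4}$, the first summand reconstructs $\frac{p^{\delta}_{r}(b)}{b^{3}}\,\mathcal{T}^{0}_{b}f$, while the second requires evaluating $\int_{0}^{\infty} b^{\delta-4}(g(b)-1)\,db$. The substitution $u = b^{2}$ turns this into $\tfrac{1}{2}\int_{0}^{\infty} u^{s-1}(e^{-\mu u}-1)\,du$ with $\mu = 1/(2r(1-r))$ and $s = (\delta-3)/2 \in (-1,0)$, which equals $\tfrac{1}{2}\mu^{-s}\Gamma(s)$ by a single integration by parts from the standard Gamma integral. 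Tracking the resulting powers of $2$ and $r(1-r)$ produces precisely $\Gamma((\delta-3)/2) / [\Gamma(\delta/2)(2r(1-r))^{3/2}]$, as required.

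For \eqref{reexpressed_term_1} no Taylor remainder is involved; the Leibniz rule together with $g(0)=1$, $g'(0)=0$, $g''(0) = -1/(r(1-r))$ yields
\[
\frac{d^{2}}{db^{2}}[g(b) f(b)]\Big|_{b=0} \;=\; -\frac{f(0)}{r(1-r)} + f''(0),
\]
and multiplication by $1/[4 c_{1}(r(1-r))^{1/2}]$ with $c_{1} = \sqrt{\pi/2}$ gives exactly the two terms of \eqref{reexpressed_term_1}.

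For \eqref{reexpressed_term_01} the strategy is the same as for \eqref{reexpressed_term_13}, one order higher. Using $f'(0)=0$ (which follows from \eqref{integral_formula_for_cond_exp} upon differentiating), the key decomposition is
\[
\mathcal{T}^{2}_{b}(g f) \;=\; g(b)\,\mathcal{T}^{2}_{b}f \;+\; f(0)\,\mathcal{T}^{2}_{b}g \;+\; \tfrac{b^{2}}{2}\,f''(0)\,(g(b)-1).
\]
Integration against $b^{\delta-4}$ leaves, besides the principal $\frac{p^{\delta}_{r}}{b^{3}}\,\mathcal{T}^{2}_{b}f$ term, two correction integrals handled by the analytic-continuation identities $\int_{0}^{\infty} u^{s-1}(e^{-\mu u}-1)\,du = \mu^{-s}\Gamma(s)$ for $s\in(-1,0)$ and $\int_{0}^{\infty} u^{s-1}(e^{-\mu u}-1+\mu u)\,du = \mu^{-s}\Gamma(s)$ for $s\in(-2,-1)$ (obtained respectively by one and two integrations by parts from the usual Gamma integral). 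With $s = (\delta-3)/2$ they produce the $j=0$ term and with $s = (\delta-1)/2$ the $j=1$ term. The main obstacle is purely bookkeeping: systematically tracking $c_{\delta}$, the factors $2^{(\delta-3)/2}$ and $(r(1-r))^{(\delta-3)/2}$ from the Gamma identities and verifying that they collapse to the advertised $1/[2^{3/2}(r(1-r))^{3/2-j}]$. Fubini is justified throughout by the integrability statements of Propositions~\ref{bound_lip_func}, \ref{bound_lip_func_square} and \ref{taylor_est_cond_exp}, combined with the Gaussian decay of $g$ at infinity.
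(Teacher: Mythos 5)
Your proposal is correct and follows essentially the same route as the paper: split the Taylor remainder of $\Sigma^{\delta}_{r}(\Phi(X)\mid\cdot)=\gamma(r,\cdot)\,E^{\delta}[\Phi(X)\mid X_{r}=\cdot]$ into the Gaussian factor times the Taylor remainder of the conditional expectation plus correction terms involving Taylor remainders of the Gaussian, justify the splitting by the integrability estimates of Section \ref{section_estimates}, and evaluate the Gaussian correction integrals via the analytically continued Gamma integrals (the paper invokes equality (4.12) of \cite{EladAltman2019} for this, which is exactly the identity you rederive by substitution and integration by parts). The only difference is cosmetic: the paper writes out only the case $\delta\in(1,3)$ and declares the other two analogous, while you sketch all three with the correct decompositions and constants.
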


\begin{proof}
We prove only the equality for $\delta \in (1,3)$, since the other cases can be treated in the same way. Then, for $h \in C^{2}_{c}(0,1)$, $\Phi \in C^{1}_{b}(L^{2}(0,1))$, we have
\[\begin{split}
&\int_{0}^{1}  h_{r}  \int_0^\infty b^{\delta-4} \Big[ \mathcal{T}^{\,0}_{b} \, \Sigma^{\delta}_{r}(\Phi (X) \,|\, \cdot\,) \Big]
  \d b \d r \\
&= \int_{0}^{1} dr h(r) \int_{0}^{\infty} db \ p^{\delta}_{r}(b) \frac{1}{b^{3}} (E^{\delta} [\Phi(X) | X_{r} = b] - E^{\delta} [\Phi(X) | X_{r} = 0]) \\
&+  \int_{0}^{1} dr h(r) \int_{0}^{\infty} db \ b^{\delta-4}  \left(\gamma(r,b) - \gamma(r,0) \right) E^{\delta} [\Phi(X) | X_{r} = 0], 
\end{split} \]
where, for $r \in (0,1)$ and $b \geq 0$, we have set $\gamma(r,b) := \frac{p^{\delta}_{r}(b)}{b^{\delta-1}}$. Note that, in the first integral in the right-hand side, by \eqref{density} and Prop. \ref{bound_lip_func}, the integrand is of order $O(b^{\delta-2} \log(b))$ when $b \to 0$, and displays exponential decays as $b \to \infty$, so is integrable. Moreover, recall  from \eqref{density} that
\[ \gamma(r,b) = \frac{1}{2^{\frac{\delta}{2} -1} (r(1-r))^{\delta/2} \Gamma(\frac{\delta}{2})} \exp \left(- \frac{b^{2}}{2r(1-r)} \right), \]
so the integrand in the second integral in the right-hand side is of order $O(b^{\delta-2})$ when $b \to 0$ and decays exponentially as $b \to \infty$, so is also integrable. As a consequence, the integral in the left-hand side is absolutely convergent as well. Moreover, for all $r \in (0,1)$, applying equality (4.12) in \cite{EladAltman2019} (with $x=\frac{\delta-3}{2}$ and $C=\frac{1}{r(1-r)}$), we have
\[ \begin{split}
&\int_{0}^{\infty} db \ b^{\delta-4}  \left(\gamma(r,b) - \gamma(r,0) \right) \\
&= \frac{1}{2^{\frac{\delta}{2} -1} (r(1-r))^{\delta/2} \Gamma(\frac{\delta}{2})} \int_{0}^{\infty} db \ b^{\delta-4}  \left(\exp \left(-\frac{b^{2}}{2 r(1-r)} \right) - 1 \right) \\
&= \frac{\Gamma(\frac{\delta-3}{2})}{\Gamma(\frac{\delta}{2})} \frac{1}{(2r(1-r))^{3/2}}.
\end{split} \]
We thus obtain the claim.
\end{proof}

\subsection{Extension of the IbPF for $\delta \in (1,3)$}


\begin{proof}[Proof of Theorem \ref{general_IbPF_13}]
Given $\Phi \in C^{1}_{b}( L^{2}(0,1))$, consider $(\Phi^{m, d}_{n,k})_{m,d,n,k \geq 1}$ approximating $\Phi$ as in Proposition \ref{density_l2}. Then, for all $m,d,n,k \geq 1$, $\Phi^{m, d}_{n,k} \in \mathscr{S}$. Hence, by \eqref{onetothree} combined with \eqref{reexpressed_term_13}, we have
\begin{align}
\label{general_formula_13_mdnk}
E^{\delta} (\partial_{h} \Phi^{m, d}_{n,k}(X) ) &= - E^{\delta} (\langle h '' , X \rangle\Phi^{m, d}_{n,k}(X) ) \\
\nonumber& - \kappa(\delta) \int_{0}^{1} dr h(r) \int_{0}^{\infty} db \ p^{\delta}_{r}(b) \frac{1}{b^{3}} \mathcal{T}^{0}_{b} E^{\delta} [\Phi^{m, d}_{n,k}(X) | X_{r} = \cdot] \\
\nonumber& - \frac{\kappa(\delta) \Gamma(\frac{\delta-3}{2})}{\Gamma(\frac{\delta}{2})} \int_{0}^{1} dr \frac{h(r)}{(2r(1-r))^{3/2}} E^{\delta} [\Phi^{m, d}_{n,k}(X) | X_{r} = 0].
\end{align}
Hence, to obtain the claim, it suffices to show that, as we send $k,n,d$ and $m$ to $+\infty$, each term appearing in \eqref{general_formula_13_mdnk} converges to the same term with $\Phi^{m, d}_{n,k}$ replaced with $\Phi$. Here, the convergence \eqref{conv_approx_sequences} comes into play. Indeed, as a consequence of condition \ref{snd_condition} in Definition \ref{pdi_conv}, and since $\|h\|_{\infty} (1+ \|X\|)$ is integrable w.r.t. $P^{\delta}$, by the dominated convergence theorem, we have
\[ \underset{m,d,n,k \to \infty}{\lim} E^{\delta} (\partial_{h} \Phi^{m, d}_{n,k}(X) ) = E^{\delta} (\partial_{h} \Phi(X) ), \]
where we take the limits $k,n,d$ and $m$ successively. Moreover, by the condition \ref{fst_condition}, and since $|\langle h'', X \rangle| \leq \|h''\|_{\infty} \|X\|$ is integrable with respect to $P^{\delta}$, by dominated convergence, we have
\[  \underset{m,d,n,k \to \infty}{\lim} E^{\delta} (\langle h '' , X \rangle\Phi^{m, d}_{n,k}(X) ) = E^{\delta} (\langle h '' , X \rangle\Phi(X) ). \]
In a similar way, we obtain that
\[ \begin{split}
\underset{m,d,n,k \to \infty}{\lim}  &\int_{0}^{1} dr \frac{h(r)}{(2r(1-r))^{3/2}} E^{\delta} [\Phi^{m, d}_{n,k}(X) | X_{r} = 0] \\
=  &\int_{0}^{1} dr \frac{h(r)}{(2r(1-r))^{3/2}} E^{\delta} [\Phi(X) | X_{r} = 0]. 
\end{split}\]
Finally, for all $r \in (0,1)$, $b>0$ and $X,Y\in C_{+}([0,1])$, by dominated convergence, we have
\[  \underset{m,d,n,k \to \infty}{\lim} \mathcal{T}^{0}_{b} E^{\delta} [\Phi^{m, d}_{n,k}(X) | X_{r} = \cdot] = \mathcal{T}^{0}_{b} E^{\delta} [\Phi(X) | X_{r} = \cdot].\] 
Moreover, as a consequence of condition \ref{thd_condition} in Definition \ref{pdi_conv}, and by Propositions \ref{bound_lip_func} and \ref{bound_lip_func_square}, these convergences all happen with uniform domination by the function $(r,b) \mapsto b^{2}(|\log(b)| +1)$. Since the latter is integrable w.r.t.  $p^{\delta}_{r}(b) \frac{1}{b^{3}} \, dr \, db$ on $(0,1) \times \mathbb{R}_{+}$, by dominated convergence, we obtain
\begin{align*}
&\underset{m,d,n,k \to \infty}{\lim}  \int_{0}^{1} dr h(r) \int_{0}^{\infty} db \ p^{\delta}_{r}(b) \frac{1}{b^{3}} \mathcal{T}^{0}_{b} E^{\delta} [\Phi^{m, d}_{n,k}(X) | X_{r} = \cdot] \\
&=  \int_{0}^{1} dr h(r) \int_{0}^{\infty} db \ p^{\delta}_{r}(b) \frac{1}{b^{3}} \mathcal{T}^{0}_{b} E^{\delta} [\Phi(X) | X_{r} = \cdot].
\end{align*}
We have thus proved that, when we send $k$,$n$,$d$ and $m$ to $+\infty$ in \eqref{general_formula_13_mdnk}, all the terms converge to the same terms with $\Phi^{m, d}_{n,k}$ replaced with $\Phi$. We thus obtain the claim.
\end{proof}

\subsection{Extension of the IbPF for $\delta =1$}


\begin{proof}[Proof of Theorem \eqref{general_IbPF_1}]
Let $\Phi \in  \mathcal{S}C^{1}_{b}\left( L^{1}(0,1) \right)$. Consider $(\Phi^{d}_{n,k})_{d,n,k \geq 1}$ approximating $\Phi$ as in Proposition \ref{density_l1}. Then, for all $d,n,k \geq 1$, $\Phi^{d}_{n,k} \in \mathscr{S}$. Hence, by \eqref{one} combined with \eqref{reexpressed_term_1}, we have
\begin{align}
\label{general_formula_1_mdnk}
E^{1} (\partial_{h} \Phi^{d}_{n,k} ) =& - E^{1} (\langle h '' , X \rangle \Phi^{d}_{n,k} ) \\
\nonumber &- \frac{1}{2\sqrt{2 \pi}} \int_{0}^{1} dr \frac{h(r)}{(r(1-r))^{3/2}} E^{1} [\Phi^{d}_{n,k} | X_{r} = 0]  \\
\nonumber & + \frac{1}{2\sqrt{2 \pi}} \int_{0}^{1} dr \frac{h(r)}{(r(1-r))^{1/2}} \frac{d^{2}}{db^{2}} E^{1} [\Phi^{d}_{n,k} | X_{r} = b]  \biggr\rvert_{b=0}  .
\end{align}
Reasoning as in the proof of Theorem \ref{general_IbPF_13}, we obtain
\[ \underset{d,n,k \to \infty}{\lim} E^{1} (\partial_{h} \Phi^{d}_{n,k}(X) ) = E^{1} (\partial_{h} \Phi(X) ), \]
\[  \underset{d,n,k \to \infty}{\lim} E^{1} (\langle h '' , X \rangle\Phi^{d}_{n,k}(X) ) = E^{1} (\langle h '' , X \rangle\Phi(X) ), \]
and
\[ \underset{d,n,k \to \infty}{\lim}  \int_{0}^{1} dr \frac{h(r)}{(r(1-r))^{3/2}} E^{1} [\Phi^{d}_{n,k}(X) | X_{r} = 0]=  \int_{0}^{1} dr \frac{h(r)}{(r(1-r))^{3/2}} E^{1} [\Phi(X) | X_{r} = 0], \]
where we take the limits $k,n$ and $d$ successively. Hence, there only remains to study the last term in the right-hand side of \eqref{general_formula_1_mdnk}. For that term, note that, for all $d,n,k \geq 1$, $r \in (0,1)$ and $b \geq 0$, by Proposition \ref{diff_prop}, we have
\begin{align*}
&\frac{d^{2}}{db^{2}}  E^{1} [\Phi^{d}_{n,k} | X_{r} = b] \biggr\rvert_{b=0}  = \\
&2 \int \left(  E^{1} \left[ \Phi^{d}_{n,k} \left(\sqrt{ X^{2} + Z} \right) | X_{r} =0 \right] - E^{1} \left[\Phi^{d}_{n,k} | X_{r} =0 \right] \right) dM^{r}(Z).
\end{align*}
and, similarly
\begin{align*}
&\frac{d^{2}}{db^{2}}  E^{1} [\Phi(X) | X_{r} = b] \biggr\rvert_{b=0}  = \\
&2 \int \left(  E^{1} \left[ \Phi \left(\sqrt{ X^{2} + Z} \right) | X_{r} =0 \right] - E^{1} \left[ \Phi(X) | X_{r} =0 \right] \right) dM^{r}(Z).
\end{align*}
Now, as a consequence of condition \ref{fst_condition} in Definition \ref{pdi_conv}, by dominated convergence, for all $Z \in C_{+}([0,1])$ we have
\[ \begin{split}
\underset{d,n,k \to \infty}{\lim} &E^{1} \left[  \Phi^{d}_{n,k}\left( \sqrt{ X^{2} + Z} \right) | X_{r} =0 \right] - E^{1} \left[  \Phi^{d}_{n,k}(X) | X_{r} =0 \right] \\
&=  E^{1} \left[ \Phi \left(\sqrt{ X^{2} + Z} \right) | X_{r} =0 \right] - E^{1} \left[ \Phi(X) | X_{r} =0 \right],
\end{split}\]
and by condition \ref{thd_condition}, all three convergences happen with uniform domination by $\| Z \|_{1}$. Since, by Lemma \ref{finite_moment_nr}, we have
\[ \int_{0}^{1} dr \frac{|h(r)|}{(r(1-r))^{1/2}}\int \|Z\|_{1} M^{r}(dZ)  \leq \frac{1}{3}  \int_{0}^{1} dr \frac{|h(r)|}{(r(1-r))^{1/2}} < \infty, \]
by dominated convergence, we deduce that the quantity  
\begin{align*}
\int_{0}^{1} dr \frac{h(r)}{(r(1-r))^{1/2}} \int \left(  E^{1} \left[\Phi^{d}_{n,k} \left(\sqrt{ X^{2} + Z} \right) | X_{r} =0 \right] - E^{1} \left[ \Phi^{d}_{n,k}(X) | X_{r} =0 \right] \right) dM^{r}(Z)
\end{align*}
converges, as we send $k,n,d \to \infty$, to 
\begin{align*}
\int_{0}^{1} dr \frac{h(r)}{(r(1-r))^{1/2}} \int \left(  E^{1} \left[\Phi(X) \left(\sqrt{ X^{2} + Z} \right) | X_{r} =0 \right] - E^{1} \left[\Phi(X) | X_{r} =0 \right] \right) dM^{r}(Z).
\end{align*}
That is
\[\begin{split}
\underset{d,n,k \to \infty}{\lim} &\int_{0}^{1} dr \frac{h(r)}{(r(1-r))^{1/2}} \frac{d^{2}}{db^{2}} E^{1} [\Phi^{d}_{n,k} (X) | X_{r} = b]   \biggr\rvert_{b=0}  \\
= &\int_{0}^{1} dr \frac{h(r)}{(r(1-r))^{1/2}} \frac{d^{2}}{db^{2}}  E^{1} [\Phi(X) | X_{r} = b] \biggr\rvert_{b=0}. \end{split}\]
We thus obtain the claim.
\end{proof}

\subsection{IbPF for $ \delta \in (0,1)$}


\begin{proof}
Let $\Phi \in \mathcal{S}C^{1,1}_{b}\left( L^{1}(0,1) \right)$, and consider $(\Phi^{d}_{n,k})_{d,n,k \geq 1}$ approximating $\Phi$ as in Proposition \ref{density_l1_lip}. Then, for all $d,n,k \geq 1$, $\Phi^{d}_{n,k} \in \mathscr{S}$. Hence, by \eqref{onetothree} combined with \eqref{reexpressed_term_01}, we have
\begin{align}
\label{general_formula_01_dnk}
& E^{\delta} \left[ \partial_{h} \Phi^{d}_{n,k} (X) \right] = - E^{\delta} \left[ \langle h '' , X \rangle \Phi^{d}_{n,k} (X) \right] \\
\nonumber & - \kappa(\delta) \int_{0}^{1} dr \ h(r)  \int_{0}^{\infty} db \ \frac{p^{\delta}_{r}(b)}{b^{3}} \mathcal{T}^{2}_{0,b} E^{\delta} [\Phi^{d}_{n,k}(X) | X_{r} = \cdot]  \\
\nonumber & - \frac{\kappa(\delta) \Gamma(\frac{\delta-3}{2})}{\Gamma(\frac{\delta}{2})} \int_{0}^{1} dr \frac{h(r)}{(2r(1-r))^{3/2}}  E^{\delta} [\Phi^{d}_{n,k}(X) | X_{r} = 0] \\
\nonumber & - \frac{\kappa(\delta) \Gamma(\frac{\delta-1}{2})}{2 \Gamma(\frac{\delta}{2})} \int_{0}^{1} dr \frac{h(r)}{(2r(1-r))^{1/2}} \frac{d^{2}}{dx^{2}} E^{\delta} [\Phi^{d}_{n,k}(X) | X_{r} = x] \biggr\rvert_{x=0}. 
\end{align}
Reasoning exactly as in the proofs of Theorems \ref{general_IbPF_13} and \ref{general_IbPF_1}, we obtain that
\[ \underset{d,n,k \to \infty}{\lim} E^{\delta} (\partial_{h} \Phi^{d}_{n,k}(X) ) = E^{\delta} (\partial_{h} \Phi(X) ), \]
\[  \underset{d,n,k \to \infty}{\lim} E^{\delta} (\langle h '' , X \rangle\Phi^{d}_{n,k}(X) ) = E^{\delta} (\langle h '' , X \rangle\Phi(X) ), \]
\[ \underset{d,n,k \to \infty}{\lim}  \int_{0}^{1} dr \frac{h(r)}{(r(1-r))^{3/2}} E^{\delta} [\Phi^{d}_{n,k}(X) | X_{r} = 0]=  \int_{0}^{1} dr \frac{h(r)}{(r(1-r))^{3/2}} E^{\delta} [\Phi(X) | X_{r} = 0], \]
and
\[\begin{split}
\underset{d,n,k \to \infty}{\lim} &\int_{0}^{1} dr \frac{h(r)}{(r(1-r))^{1/2}} \frac{d^{2}}{db^{2}} E^{\delta} [\Phi^{d}_{n,k} (X) | X_{r} = b]   \biggr\rvert_{b=0}  \\
= &\int_{0}^{1} dr \frac{h(r)}{(r(1-r))^{1/2}} \frac{d^{2}}{db^{2}}  E^{\delta} [\Phi(X) | X_{r} = b] \biggr\rvert_{b=0}. 
\end{split}\]
Hence, there only remains to study the second term in the right-hand side of \eqref{general_formula_01_dnk}. Reasoning as before, we see that, for all $r \in (0,1)$, $b \geq 0$, we have
\[\underset{d,n,k \to \infty}{\lim} \mathcal{T}^{2}_{0,b} E^{\delta} [\Phi^{d}_{n,k}(X) | X_{r} = \cdot] = \mathcal{T}^{2}_{0,b} E^{\delta} [\Phi^{d}(X) | X_{r} = \cdot],\]
and, since the approximating family $(\Phi^{d}_{n,k})_{d,n,k \geq 1}$ satisfies the domination assumption \eqref{condition_pdi_lip}, by Proposition \ref{taylor_est_cond_exp}, all three limits happen with uniform domination by $b^{4}$. Since 
\[ \int_{0}^{1} dr \ |h(r)|  \int_{0}^{\infty} db \ \frac{p^{\delta}_{r}(b)}{b^{3}} b^{4} < \infty, \]
 by dominated convergence, we deduce that 
\begin{align*}
&\underset{d,n,k \to \infty}{\lim} \int_{0}^{1} dr \ h(r)  \int_{0}^{\infty} db \ \frac{p^{\delta}_{r}(b)}{b^{3}} \mathcal{T}^{2k}_{0,b} E^{\delta} [\Phi^{d}_{n,k}(X) | X_{r} = \cdot] \\
&= \int_{0}^{1} dr \ h(r)  \int_{0}^{\infty} db \ \frac{p^{\delta}_{r}(b)}{b^{3}} \mathcal{T}^{2k}_{0,b} E^{\delta} [\Phi^{d}(X) | X_{r} = \cdot].
\end{align*}
We thus obtain the claim.
\end{proof}

\begin{rk}[An open question]
As mentioned in Remark \ref{open_quest_estimates}, it is still unknown whether Theorems \ref{general_IbPF_1} and \ref{general_IbPF_01} apply for all $\Phi \in C^{1}_{b}(L^{2}(0,1))$. Answering this question would require to obtain either sharpness statements or refinements of the estimates obtained in Section \ref{section_estimates}.
\end{rk}

\section{Appendix}
\label{sect_appendix}

We now give a proof of the approximation Lemmas \ref{pseudo_Weierstrass} and \ref{pseudo_Weierstrass_bis} we used in Section \ref{section_density} to approximate sufficiently regular functions on $\mathbb{R}_{+}^{d}$ by linear combinations of exponential functions. The main idea is simply to proceed to a change of variable using the exponential function, so that we are led to the problem of approximating functions on $[0,1]^{d}$ by polynomials; this, in turn, is done using Bernstein polynomials. Note that while Lemma \ref{pseudo_Weierstrass} is a consequence of Theorem 1.1.2 in \cite{llavona1986approximation}, we could not find in the literature a version of the Weierstrass approximation Theorem  yielding the particular type of convergence needed in Lemma \ref{pseudo_Weierstrass_bis}. We therefore propose a construction of the approximating sequences which works for both lemmas.
  
\subsection{Proof of Lemma \ref{pseudo_Weierstrass}} 
 
\begin{proof}
Define $f:[0,1]^{d} \to \mathbb{R}$ by setting
\[f(y_{1}, \ldots,y_{d}) = h \left( - \ln(y_{1}), \ldots, - \ln(y_{d}) \right), \qquad (y_1, \ldots, y_d) \in  (0,1]^{d},\]
and $f(y_{1}, \ldots,y_{d}) = 0$ if $y_i=0$ for some $i$. For all $k \geq 0$, define the polynomial function $P_{k}f$ on $[0,1]^{d}$ by
\[  P_{k}f(y) := \sum_{\substack{\ell = (\ell_1, \ldots, \ell_d)\\ 0 \leq \ell_{1}, \ldots, \ell_{d} \leq k}} f\left(\frac{ \ell}{k} \right) \prod_{i=1}^{d} B^{k}_{\ell_{i}}(y_{i}), \quad y \in [0,1]^{d}, \] 
where we use the notation $\frac{\ell}{k} := (\frac{\ell_1}{k}, \ldots, \frac{\ell_d}{k})$ and, for all $0\leq m \leq k$, $B^{k}_{m}$ is the Bernstein polynomial defined by
\[ B^{k}_{m}(X) := {k \choose m} X^{m}(1-X)^{k-m}. \]
Note that these polynomials form a partition of unity:
\begin{equation}
\label{part_of_unity}
\forall k \geq 0, \quad \sum_{m=0}^{k} B^{k}_{m}(X) = 1.
\end{equation}
We claim that the following holds:
\begin{itemize}
\item for all $y \in [0,1]^{d}$, $P_{k}f(y) \underset{k \to \infty}{\longrightarrow} f(y)$, and
\[ \forall k \geq 0, \quad \|P_{k}f\|_{\infty} \leq \|f\|_{\infty}\]
\item for all $y \in [0,1]^{d}$, $\nabla P_{k}f(y)   \underset{k \to \infty}{\longrightarrow} \nabla f(y)$, and
\[\forall k \geq 0, \forall i = 1, \ldots, d, \quad \|\partial_{i} P_{k}f\|_{\infty} \leq \| \partial_{i} f \|_{\infty}.\]
\end{itemize}
To prove the first point, note that, for all $y \in [0,1]^{d}$, we have
\begin{equation}
\label{prob_repres}
 P_{k}f(y) = \mathbb{E} \left[ f \left( \frac{S^{1}_{k}}{k}, \ldots, \frac{S^{d}_{k}}{k} \right) \right]
\end{equation}
where, for $1 \leq i \leq d$, we set $S^{i}_{k} := \sum_{j=1}^{k} X^{i}_{j}$, with $X^{i}_{j}$ a Bernoulli variable of parameter $y_{i}$,
the family of random variables $(X^{i}_{j})_{\substack{1 \leq i\leq d \\ 1 \leq j \leq k}}$ being independent. By the weak law of large numbers, for all $i= 1, \ldots, d$, $\frac{S^{i}_{k}}{k} \underset{k \to \infty}{\longrightarrow} y_{i}$ in probability.
 Hence, we have the convergence in probability
\[   \left( \frac{S^{1}_{k}}{k}, \ldots, \frac{S^{d}_{k}}{k} \right)  \underset{k \to \infty}{\longrightarrow} (y_{1}, \ldots, y_{d}), \]
so that, since $f$ is bounded and continuous on $[0,1]^{d}$, we deduce that $P_{k}f(y) \underset{k \to \infty}{\longrightarrow} f(y)$. Moreover, from the representation \eqref{prob_repres}, we see that $\|P_{k}f\|_{\infty} \leq \|f\|_{\infty}$. We now establish the second point. For all $i= 1, \ldots, d$ and $y \in [0,1]^{d}$, we have
\[ \partial_{i} P_{k}f (y) =  \sum_{0 \leq \ell_{1}, \ldots, \ell_{d} \leq k } f \left( \frac{\ell}{k} \right) B^{k}_{\ell_{1}}(y_{1}) \ldots {B^{k}_{\ell_{i}}}'(y_{i }) \ldots B^{k}_{\ell_{d}}(y_{d}). \] 
But, for all $m = 1, \ldots,  k$, it holds ${B^{k}_m}' = k \left( B^{k-1}_{m-1} - B^{k-1}_{m} \right)$ (with the convention $B^{n}_{m} =0$ if $n < 0$, $m < 0$ or $m > n$).  Therefore, we have
\[\partial_{i} P_{k}f (y) =  \sum_{0 \leq \ell_{1}, \ldots, \ell_{d} \leq k } f \left( \frac{\ell}{k}\right) k \left( B^{k-1}_{\ell_{i}-1}(y_{i}) - B^{k-1}_{\ell_{i}}(y_{i }) \right)  \prod_{j \neq i} B^{k}_{\ell_{j}}(y_{j}), \]
which, after a discrete summation by parts, yields
\[\partial_{i} P_{k}f (y) =  \sum_{0 \leq \ell_{1}, \ldots, \ell_{d} \leq k } k \left( f \left( \frac{\ell}{k} + \frac{1}{k} e_i \right) -  f \left( \frac{\ell}{k} \right) \right) B^{k-1}_{\ell_{i}}(y_{i })  \prod_{j \neq i} B^{k}_{\ell_{j}}(y_{j}), \]
where we have denoted by $(e_1, \ldots,e_d)$ the canonical basis of $\mathbb{R}^d$. 
Now, for all $\ell \in \{0, \ldots,k\}^{d}$, we have
\[ \left| f \left( \frac{\ell}{k} + \frac{1}{k} e_i \right) - f \left( \frac{\ell}{k} \right) \right| \leq \frac{1}{k} \| \partial_{i} f \|_{\infty}, \]
so that, recalling \eqref{part_of_unity}, we obtain
\begin{align*}
| \partial_{i} P_{k}f (y) | &\leq \| \partial_{i} f \|_{\infty} \sum_{0 \leq \ell_{1}, \ldots, \ell_{d} \leq k } B^{k-1}_{\ell_{i}}(y_{i })  \prod_{j \neq i} B^{k}_{\ell_{j}}(y_{j}) \\
&= \| \partial_{i} f \|_{\infty}. 
\end{align*}
Moreover, we can write
\begin{align}
\label{deriv_polynom}
\partial_{i} P_{k}f (y) &= \sum_{0 \leq \ell_{1}, \ldots, \ell_{d} \leq k } \partial_{i} f \left( \frac{\ell}{k} \right)  B^{k-1}_{\ell_{i}}(y_{i })  \prod_{j \neq i} B^{k}_{\ell_{j}}(y_{j}) \\
\nonumber &+  \sum_{0 \leq \ell_{1}, \ldots, \ell_{d} \leq k } R(k,\ell) B^{k-1}_{\ell_{i}}(y_{i })  \prod_{j \neq i} B^{k}_{\ell_{j}}(y_{j}),
\end{align}
where, for all $\ell \in \{0, \ldots,k\}^{d}$,
\[ R(k,\ell) :=  k \left( f \left( \frac{\ell}{k} + \frac{1}{k} e_i \right) -  f \left( \frac{\ell}{k} \right) \right) -  \partial_{i} f \left( \frac{\ell}{k} \right). \]
Since $\partial_{i} f$ is continuous on $[0,1]^{d}$, reasoning as for $f$, we obtain that the first term in the RHS of \eqref{deriv_polynom} converges, as $k \to \infty$, to $\partial_{i} f(y)$. Regarding the second term, note that 
\begin{align*}
|R(k,\ell)| &= k \left| \int_{\frac{\ell_{i}}{k}}^{\frac{\ell_{i}+1}{k}} \left(\partial_{i} f \left( \frac{\ell_{1}}{k},\ldots, t, \ldots, \frac{\ell_{d}}{k} \right) - \partial_{i} f \left( \frac{\ell_{1}}{k},\ldots, \frac{\ell_{i}}{k}, \ldots, \frac{\ell_{d}}{k} \right) \right) dt \right| \\
&\leq k \int_{\frac{\ell_{i}}{k}}^{\frac{\ell_{i}+1}{k}} \omega \left( \partial_{i}f, \frac{1}{k} \right)dt =    \omega \left( \partial_{i}f, \frac{1}{k} \right),
\end{align*}
where $ \omega \left( \partial_{i}f, \cdot \right)$ denotes the modulus of continuity of $\partial_{i}f$ on $[0,1]^{d}$. Therefore, the second term in the right-hand side of \eqref{deriv_polynom} is dominated by
\[\omega \left( \partial_{i}f, \frac{1}{k} \right) \sum_{0 \leq \ell_{1}, \ldots, \ell_{d} \leq k } B^{k-1}_{\ell_{i}}(y_{i })  \prod_{j \neq i} B^{k}_{\ell_{j}}(y_{j}) = \omega \left( \partial_{i}f, \frac{1}{k} \right), \]
which converges to $0$ as $k \to \infty$. Therefore, sending $k \to \infty$ in \eqref{deriv_polynom}, we deduce that 
\[ \partial_{i} P_{k}f(y) \underset{k \to \infty}{\longrightarrow} \partial_{i} f(y).\]
This proves the second point. \\
We can now conclude the proof of the lemma. Indeed, setting, for all $k \in \mathbb{N}$ and $x \in \mathbb{R}_{+}^{d}$,
\[ h_{k}(x) :=  P_{k}f \left( e^{-x_{1}}, \ldots, e^{-x_{d}} \right), \]
it follows that $h_{k}$ has the requested form. Moreover, by the first point above, we have
 \[ \forall x \in \mathbb{R}_{+}^{d}, \quad h_{k}(x) \underset{k \to \infty}{\longrightarrow} f\left( e^{-x_{1}}, \ldots, e^{-x_{d}} \right) = h(x_{1}, \ldots, x_{d}), \]
together with the domination
\[ \forall k \in \mathbb{N}, \quad \|h_{k}\|_{\infty} \leq \|P_{k}f\|_{\infty} \leq \|f\|_{\infty}. \]
On the other hand, by the second point above, for all $i =1, \ldots, d$ and $x \in \mathbb{R}_{+}^{d}$, we have
\begin{align*} 
\partial_{i} h_{k}(x) =& - e^{-x_{i}} \partial_{i} P_{k}f \left( e^{-x_{1}}, \ldots, e^{-x_{d}} \right) \\ 
\underset{k \to \infty}{\longrightarrow}& - e^{-x_{i}} \partial_{i} f \left( e^{-x_{1}}, \ldots, e^{-x_{d}} \right) = \partial_{i} h(x).
\end{align*} 
Moreover, we have
\[ \forall k \in \mathbb{N}, \quad  \| \partial_{i} h_{k} \|_{\infty} \leq \|\partial_{i}f\|_{\infty}. \]
But, for all $ y \in [0,1]^{d}$,
\[ \partial_{i}f(y) = \frac{1}{y_{i}} \partial_{i} h \left( - \ln(y_{1}), \ldots, - \ln(y_{d}) \right),\] 
and, since $\partial_{i} h$ is supported in $[0,n]^{d}$, so that $\partial_{i} f$ is supported in $[e^{-n},1]^{d}$, we get
\begin{align*}
\|\partial_{i}f\|_{\infty}  = \sup_{y \in [e^{-n},1]^{d}} \left|  \frac{1}{y_{i}} \partial_{i} h \left( - \ln(y_{1}), \ldots, - \ln(y_{d}) \right) \right| \leq e^{n} \|\partial_{i} h \|_{\infty},
\end{align*}
whence
\[ \forall k \in \mathbb{N}, \quad  \| \partial_{i} h_{k} \|_{\infty} \leq e^{n} \|\partial_{i} h \|_{\infty}, \]
which gives the requested bound (with $C(n) := e^{n}$) . The lemma is proved.
\end{proof}

\subsection{Proof of Lemma \ref{pseudo_Weierstrass_bis}}

\begin{proof}
To obtain the claim, it suffices to show that, as a consequence of the estimate \eqref{lip_deriv_g}, the sequence of functions $(h_{k})_{k \geq 0}$ constructed in the proof of Lemma \ref{pseudo_Weierstrass} satisfies, for all $k \geq 0$ and $i = 1, \ldots, d$,
\[ \forall x, y  \in \mathbb{R}_{+}^{d}, \quad | \partial_{i} h _{k} (x) - \partial_{i} h _{k} (y) | \leq C'(n) \left( L' + \| \partial_{i} h \|_{\infty}  \right) \sum_{j=1}^{d} |x_{j} - y_{j}| \]
for some constant $C'(n) > 0$. From now on, let $k \geq 0$ and $i = 1, \ldots, d$ be fixed. First note that, for all $u,v \in [e^{-n-1}, 1]^{d}$, we have
\begin{align*}
| \partial_{i} f (u) -  \partial_{i} f (u) | &= \left| \frac{1}{u_{i}}\partial_{i} h \left( - \ln(u_{1}), \ldots, - \ln(u_{d}) \right)  - \frac{1}{v_{i}} \partial_{i} h \left( - \ln(v_{1}), \ldots, - \ln(v_{d}) \right) \right| \\
&\leq   \frac{1}{u_{i}} \left|  \partial_{i} h \left( - \ln(u_{1}), \ldots, - \ln(u_{d}) \right)  -  \partial_{i} h \left( - \ln(v_{1}), \ldots, - \ln(v_{d}) \right) \right| \\
&+ \left|  \frac{1}{u_{i}}  -  \frac{1}{v_{i}} \right| \left| \partial_{i} h \left( - \ln(v_{1}), \ldots, - \ln(v_{d}) \right) \right| \\
&\leq e^{n+1} L' \sum_{j=1}^{d} |\ln(u_{j}) - \ln(v_{j}) |  + \frac{|u_{i} - v_{i}|}{u_{i}v_{i}} \| \partial_{i} h \|_{\infty} \\
&\leq e^{2(n+1)} L' \sum_{j=1}^{d} |u_{j} - v_{j}| + e^{2(n+1)}   \| \partial_{i} h \|_{\infty} |u_{i} - v_{i}| \\
&\leq e^{2(n+1)} \left( L' + \| \partial_{i} h \|_{\infty}  \right) \sum_{j=1}^{d} |u_{j} - v_{j}| .
\end{align*}
Moreover, since $f$ is supported in $ [e^{-n}, 1]^{d}$, for all $u,v \notin [e^{-n}, 1]^{d}$, we have
\[ | \partial_{i} f (u) -  \partial_{i} f (u) | =0 \]
Finally, for all $u \in [e^{-n}, 1]^{d}$ and $v \notin [e^{-n-1}, 1]^{d}$, we have
\begin{align*}
| \partial_{i} f (u) -  \partial_{i} f (u) | &= | \partial_{i} f (u)  | \leq \| \partial_{i} f \|_{\infty} \leq e^{n} \| \partial_{i} h \|_{\infty}, 
\end{align*}
and, since $\sum_{j=1}^{d} |u_{j} - v_{j}| \geq e^{-n}(1-e^{-1}) \geq e^{-n- 2}$ by our assumption on $u$ and $v$, we deduce that
\begin{align*}
| \partial_{i} f (u) -  \partial_{i} f (u) | \leq  e^{2n+2} \| \partial_{i} h \|_{\infty} \sum_{j=1}^{d} |u_{j} - v_{j}| , 
\end{align*}
and by symmetry the same bound holds when $u \notin [e^{-n-1}, 1]^{d}$ and $v \in [e^{-n}, 1]^{d}$. Thus, we deduce that, for all $u,v \in [0,1]^{d}$, we have
\begin{equation}
\label{estim_deriv_f}
 | \partial_{i} f (u) -  \partial_{i} f (u) | \leq e^{2(n+1)} \left( L' + \| \partial_{i} h \|_{\infty}  \right) \sum_{j=1}^{d} |u_{j} - v_{j}|.
\end{equation} 
We will use this estimate to bound the second-order partial derivatives of $P_{k}f$. Let first $ j \in \{1, \ldots, d \}$ such that $j \neq i$, and suppose, for example, that $j > i$. Then recall from the proof of Lemma \ref{pseudo_Weierstrass} that, for all $u \in [0,1]^{d}$,
\[ \partial_{i} P_{k}f (u) =  \sum_{0 \leq \ell_{1}, \ldots, \ell_{d} \leq k } k \left( f \left(  \frac{\ell}{k} + \frac{1}{k} e_i\right) -  f \left( \frac{\ell}{k} \right) \right) B^{k-1}_{\ell_{i}}(u_{i })  \prod_{j \neq i} B^{k}_{\ell_{j}}(u_{j}). \]
By the same computations, we get
\[ \partial^{2}_{i,j} P_{k}f (u) =  \sum_{0 \leq \ell_{1}, \ldots, \ell_{d} \leq k } D(k,\ell) B^{k-1}_{\ell_{i}}(u_{i }) B^{k-1}_{\ell_{j}}(u_{j })  \prod_{m \neq i,j} B^{k}_{\ell_{m}}(u_{m}), \]
where, for all $\ell \in \{0, \ldots, k \}^{d}$,
\begin{align*}
D(k, \ell) := k^{2} \Bigg[& f \left(  \frac{\ell}{k} + \frac{1}{k} e_i +\frac{1}{k} e_j \right) -  f \left(  \frac{\ell}{k} + \frac{1}{k} e_i\right) \\
&- f \left(\frac{\ell}{k} + \frac{1}{k} e_j \right) +  f \left( \frac{\ell}{k} \right) \Bigg]. 
\end{align*}
Hence, as a consequence of \eqref{estim_deriv_f}, we have
\begin{align*}
|D(k, \ell)| &= k^{2} \left| \int_{0}^{1/k} \left( \partial_{j}  f \left(  \frac{\ell}{k} + \frac{1}{k} e_{i} + t e_{j} \right) -  \partial_{j}f \left(  \frac{\ell}{k} + t e_{j} \right) \right) dt \right| \\ 
&\leq k^{2} e^{2(n+1)} \int_{0}^{1/k} \left( L' + \| \partial_{i} h \|_{\infty}  \right) \frac{1}{k} dt \leq e^{2(n+1)} \left( L' + \| \partial_{i} h \|_{\infty}  \right) . 
\end{align*}
Therefore, for all $u \in [0,1]^{d}$, recalling \eqref{part_of_unity}, we have
\begin{align*}
|\partial^{2}_{i,j} P_{k}f (u)| &\leq  e^{2(n+1)} \left( L' + \| \partial_{i} h \|_{\infty}  \right)  \sum_{0 \leq \ell_{1}, \ldots, \ell_{d} \leq k } B^{k-1}_{\ell_{i}}(u_{i }) B^{k-1}_{\ell_{j}}(u_{i })  \prod_{m \neq i,j} B^{k}_{\ell_{m}}(u_{m}) \\
&=  e^{2(n+1)} \left( L' + \| \partial_{i} h \|_{\infty}  \right) . 
\end{align*}
In a similar way we obtain that, for all $u \in [0,1]^{d}$, 
 \[ |\partial^{2}_{i,i} P_{k}f (u)| \leq e^{2(n+1)} \left( L' + \| \partial_{i} h \|_{\infty}  \right) . \]
Recall now that $h_{k}$ is defined, for all $x \in \mathbb{R}_{+}^{d}$, by $h_{k}(x) = P_{k}f \left( e^{-x_{1}}, \ldots, e^{-x_{d}} \right)$. Hence, for all $j \neq i$ and $x \in \mathbb{R}_{+}^{d}$, we have
\begin{align*}
|\partial^{2}_{i,j} h_{k} (x)| &= |e^{-x_{i}} e^{-x_{j}} \partial^{2}_{i,j} P_{k}f  \left( e^{-x_{1}}, \ldots, e^{-x_{d}} \right) | \\
&\leq e^{2(n+1)} \left( L' + \| \partial_{i} h \|_{\infty}  \right).
\end{align*}
On the other hand, for all $x \in \mathbb{R}_{+}^{d}$,
\begin{align*}
\partial^{2}_{i,i} h_{k} (x) = e^{-x_{i}} \partial_{i} P_{k}f  \left( e^{-x_{1}}, \ldots, e^{-x_{d}} \right) + e^{-2 x_{i}} \partial^{2}_{i,i} P_{k}f  \left( e^{-x_{1}}, \ldots, e^{-x_{d}} \right), 
\end{align*}
so that
\begin{align*}
|\partial^{2}_{i,i} h_{k} (x)| &\leq \| \partial_{i} P_{k}f \|_{\infty} + \| \partial^{2}_{i,i} P_{k}f \|_{\infty} \\
&\leq e^{n}  \| \partial_{i} h \|_{\infty} + e^{2(n+1)} \left( L' + \| \partial_{i} h \|_{\infty}  \right) \\
&\leq 2 e^{2(n+1)} \left( L' + \| \partial_{i} h \|_{\infty}  \right).
\end{align*}
We have thus proved that, for all $j =1, \ldots, d$ and $x \in \mathbb{R}_{+}^{d}$,
\[ |\partial^{2}_{i,j} h_{k} (x)| \leq 2 e^{2(n+1)} \left( L' + \| \partial_{i} h \|_{\infty}  \right).\]
Therefore, for all $x, y \in \mathbb{R}^{+}_{d}$, we have
\[ | \partial_{i} h_{k} (x) - \partial_{i} h_{k} (y) | \leq 2 e^{2(n+1)} \left( L' + \| \partial_{i} h \|_{\infty}  \right) \sum_{j=1}^{d} |x_{j} - y_{j}|, \]
which yields the desired bound.
\end{proof}

\bibliographystyle{amsplain}
\bibliography{Taylor_Bessel_and_IbPF}

\end{document}